\theoremstyle{plain}
\newtheorem{theorem}{Theorem}[section]
\newtheorem{lemma}[theorem]{Lemma}
\newtheorem{proposition}[theorem]{Proposition}
\newtheorem{corollary}[theorem]{Corollary}
\theoremstyle{definition}
\newtheorem{definition}[theorem]{Definition}
\newtheorem{definitions}[theorem]{Definitions}
\newtheorem{examples}[theorem]{Examples}
\newtheorem{remark}[theorem]{Remark}
\newtheorem*{remark*}{Remark}
\newtheorem*{remarks*}{Remarks}
\newtheorem*{assumption*}{Assumption}
\numberwithin{equation}{section}
\newcommand\ignore[1]{#1}
\newcommand{\dotedge}{\ar@{.}}
\newcommand{\eqedge}{\ar@{=}}
\newcommand{\N}{{\mathbb{N}}}
\newcommand{\Z}{{\mathbb{Z}}}
\newcommand{\Q}{{\mathbb{Q}}}
\newcommand{\Zplus}{{\mathbb{Z}^+}}
\newcommand{\calA}{\mathcal{A}}
\newcommand{\calD}{\mathcal{D}}
\newcommand{\calM}{\mathcal{M}}
\newcommand{\ol}{\overline}
\newcommand{\abar}{\ol{a}}
\newcommand{\fbar}{\ol{f}}
\newcommand{\qbar}{\ol{q}}
\newcommand{\xbar}{\ol{x}}
\newcommand{\ybar}{\ol{y}}
\newcommand{\zbar}{\ol{z}}
\newcommand{\Cbar}{\ol{C}}
\newcommand{\Ebar}{\ol{E}}
\newcommand{\Jbar}{\ol{J}}
\newcommand{\Mbar}{\ol{\calM}}
\newcommand{\psibar}{\ol{\psi}}
\newcommand{\Dtil}{\widetilde{D}}
\newcommand{\Etil}{\widetilde{E}}
\newcommand{\Ftil}{\widetilde{F}}
\newcommand{\eps}{\varepsilon}
\DeclareMathOperator{\soc}{soc}
\newcommand{\SSGr}{\mathbf{SSGr}}
\newcommand{\tri}{\vartriangleleft}
\DeclareMathOperator{\fin}{fin}
\newcommand{\Cfin}{C_{\fin}}
\DeclareMathOperator{\ped}{ped}
\begin{document}

\title{Tame and wild refinement monoids}

\author{P. Ara}

\address{Departament de Matem\`atiques, Universitat Aut\`onoma de Barcelona,
08193 Bellaterra (Barcelona), Spain.} \email{para@mat.uab.cat}

\author{K. R. Goodearl}

\address{Department of Mathematics, University of
California, Santa Barbara, CA 93106. }\email{goodearl@math.ucsb.edu}

\thanks{Part of this research was undertaken while the second-named author held a sabbatical fellowship from the Ministerio de Educaci\'on y Ciencias de Espa\~na at the Centre de Recerca Matem\`atica in Barcelona during spring 2011. 
He thanks both institutions for their support and hospitality. The first-named 
author was partially supported by DGI MINECO
MTM2011-28992-C02-01, by FEDER UNAB10-4E-378
``Una manera de hacer Europa", and by the
Comissionat per Universitats i
Recerca de la Generalitat de Catalunya.}

\begin{abstract} The class of refinement monoids (commutative monoids satisfying the Riesz refinement property) is subdivided into those which are tame, defined as being an inductive limit of 
finitely generated refinement monoids, and those which are wild, i.e., not tame. It is shown that tame refinement monoids enjoy many positive properties, including separative 
cancellation ($2x=2y=x+y \implies x=y$) and multiplicative cancellation with respect to the algebraic ordering ($mx\le my \implies x\le y$). In contrast, examples are constructed 
to exhibit refinement monoids which enjoy all the mentioned good properties but are nonetheless wild.
\end{abstract}

\maketitle

\section*{Introduction}

The class of \emph{refinement monoids} -- commutative monoids satisfying the Riesz refinement property -- has been extensively studied 
over the past few decades, in connection with the classification of countable Boolean algebras (e.g., \cite{Dob82, Ket, Pierce}) and the non-stable
K-theory of rings and C*-algebras (e.g., \cite{Areal, AGOP, AMP, GPW, PW06}), as well as for its own sake (e.g., \cite{Brook01, Dob, Dobb84, Gril76, W98}). 
Ketonen proved in \cite{Ket} that the set $BA$ of isomorphism classes of countable Boolean algebras, with the operation induced from direct products, 
is a refinement monoid, and that $BA$ contains all countable commutative monoid phenomena in that every countable commutative monoid embeds into $BA$. An important 
invariant in non-stable K-theory is the commutative monoid $V(R)$ associated to any ring $R$, consisting of the isomorphism classes of finitely generated projective (left, say) 
$R$-modules, with the operation induced from direct sum. If $R$ is a (von Neumann) regular ring or a C*-algebra with real rank zero (more generally, an exchange ring), 
then $V(R)$ is a refinement monoid (e.g., \cite[Corollary 1.3, Theorem 7.3]{AGOP}). The \emph{realization problem} asks which refinement monoids appear as a $V(R)$ for $R$ in 
one of the above-mentioned classes. Wehrung \cite{W98IsraelJ} constructed a conical refinement monoid of cardinality $\aleph_2$ which is not isomorphic to $V(R)$ for any regular 
ring $R$, but it is an open problem whether every countable conical refinement monoid can be realized as $V(R)$ for some regular $R$.

One observes that large classes of realizable refinement monoids satisfy many desirable properties that do not hold in general, in marked contrast to the ``universally bad" 
refinement monoid $BA$, which exhibits any property that can be found in a commutative monoid, such as elements $a$ and $b$ satisfying $2a=2b$ while $a\ne b$, 
or satisfying $a=a+2b$ while $a\ne a+b$. Moreover, the largest classes of realizable refinement monoids consist of inductive limits of simple ingredients, such as 
finite direct sums of copies of $\Zplus$ or $\{0,\infty\}$. These monoids are more universally realizable in the sense that they can be realized as $V(R)$ for regular 
algebras $R$ over any prescribed field. By contrast, examples are known of countable refinement monoids which are realizable only for regular algebras over some countable fields.
These examples are modelled on the celebrated construction of Chuang and Lee \cite{CL} (see \cite[Section 4]{Areal}).

These considerations lead us to separate the class of refinement monoids into subclasses of \emph{tame} and \emph{wild} refinement monoids, where the tame ones are the inductive
limits of finitely generated refinement monoids and the rest are wild. Existing inductive limit theorems allow us to identify several large classes of tame 
refinement monoids, such as unperforated cancellative refinement monoids; refinement monoids in which $2x=x$ for all $x$; and the \emph{graph monoids} introduced in \cite{AMP, AG}. We prove 
that tame refinement monoids satisfy a number of desirable properties, such as separative cancellation and lack of perforation (see \S\ref{background} for unexplained terms). Tame 
refinement monoids need not satisfy full cancellation, as $\{0,\infty\}$ already witnesses, but we show that among tame refinement monoids, stable finiteness ($x+y=x \implies y=0$) implies cancellativity. 

The collection of good properties enjoyed by tame refinement monoids known so far does not, as yet, characterize tameness. 
We construct two wild refinement monoids (one a quotient of the other) which are conical, stably finite, antisymmetric, separative, and unperforated; moreover, one of them is also 
archimedean. These monoids will feature in \cite{AGreal}, where an investigation of the subtleties of the realization problem will be carried out. In particular, we will show that one 
of the two monoids (the quotient monoid) is realizable by a von Neumann regular algebra over any field, but the other is realizable only by von Neumann regular algebras over {\it countable\/} fields.
We will also develop in \cite{AGreal} a connection with the Atiyah problem for the lamplighter group.

\section{Refinement monoids}

\subsection{Background and notation}  \label{background}

All monoids in this paper will be commutative, written additively, and homomorphisms between them will be assumed to be monoid homomorphisms. Categorical notions, such as inductive limits, will refer to the category of commutative monoids. The \emph{kernel} of a monoid homomorphism $\phi: A\rightarrow B$ is the congruence 
$$\ker(\phi) := \{ (a,a') \in A^2 \mid \phi(a) = \phi(a') \} .$$

We write $\Zplus$ for the additive monoid of nonnegative integers, and $\N$ for the additive semigroup of positive integers. The symbol $\sqcup $ stands for the disjoint union of sets.

A monoid $M$ is \emph{conical} if $0$ is the only invertible element of $M$, that is, elements $x,y \in M$ can satisfy $x+y=0$ only if $x=y=0$. Several levels of cancellation properties will be considered, as follows. First, $M$ is \emph{stably finite} if $x+y=x$ always implies $y=0$, for any $x,y \in M$. Second, $M$ is \emph{separative} provided $2x=2y=x+y$ always implies $x=y$, for any $x,y \in M$. There are a number of equivalent formulations of this property, as, for instance, in \cite[Lemma 2.1]{AGOP}. Further, $M$ is \emph{strongly separative} if $2x=x+y$ always implies $x=y$. Finally, $M$ is \emph{cancellative} if it satisfies full cancellation: $x+y=x+z$ always implies $y=z$, for any $x,y,z\in M$.

The \emph{algebraic ordering} (or \emph{minimal ordering}) in $M$ is the translation-invariant pre-order given by the existence of subtraction: elements $x,y \in M$ satisfy $x \le y$ if and only if there is some $z \in M$ such that $x+z=y$. If $M$ is conical and stably finite, this relation is a partial order on $M$. An \emph{order-unit} in $M$ is any element $u \in M$ such that all elements of $M$ are bounded above by multiples of $u$, that is, for any $x\in M$, there exists $m \in \Zplus$ such that $x\le mu$. The monoid $M$ is called \emph{unperforated} if $mx \le my$ always implies $x \le y$, for any $m \in \N$ and $x,y \in M$.

The monoid $M$ is said to be \emph{archimedean} if elements $x,y\in M$ satisfy $nx \le y$ for all $n\in\N$ only when $x$ is invertible. When $M$ is conical, this condition implies stable finiteness.

An \emph{o-ideal} of $M$ is any submonoid $J$ of $M$ which is hereditary with respect to the algebraic ordering, that is, whenever $x \in M$ and $y \in J$ with $x\le y$, it follows that $x\in J$. (Note that an o-ideal is not an ideal in the sense of semigroup theory.) The hereditary condition is equivalent to requiring that $x+z \in J$ always implies $x,z\in J$, for any $x,z\in M$. Given an o-ideal $J$ in $M$, we define the \emph{quotient monoid} $M/J$ to be the monoid $M/{\equiv}_J$, where ${\equiv}_J$ is the congruence on $M$ defined as follows: $x \equiv_J y$ if and only if there exist $a,b\in J$ such that $x+a = y+b$. Quotient monoids $M/J$ are always conical.

Separativity and unperforation pass from a monoid $M$ to any quotient $M/J$ \cite[Lemma 4.3]{AGOP}, but stable finiteness and the archimedean property do not, even in refinement monoids (Remark \ref{MMbarexamples}).

The monoid $M$ is called a \emph{refinement monoid} provided it satisfies the \emph{Riesz refinement property}: whenever $x_1,x_2,y_1,y_2 \in M$ with $x_1+x_2 = y_1+y_2$, there exist $z_{ij} \in M$ for $i,j=1,2$ with $x_i = z_{i1}+z_{i2}$ for $i=1,2$ and $y_j = z_{1j}+z_{2j}$ for $j = 1,2$. It can be convenient to record the last four equations in the format of a \emph{refinement matrix}
$$\bordermatrix{
 &y_1&y_2\cr  x_1&z_{11}&z_{12}\cr  x_2&z_{21}&z_{22}\cr},$$
where the notation indicates that the sum of each row of the matrix equals the element labelling that row, and similarly for column sums. By induction, analogous refinements hold for equalities between sums with more than two terms. A consequence of refinement is the \emph{Riesz decomposition property}: whenever $x,y_1,\dots,y_n \in M$ with $x \le y_1+ \cdots+ y_n$, there exist $x_1,\dots,x_n \in M$ such that $x = x_1+ \cdots+ x_n$ and $x_i \le y_i$ for all $i$.

The quotient of any refinement monoid by an o-ideal is a conical refinement monoid (e.g., \cite[p.~476]{Dob}, \cite[Proposition 7.8]{Brook97}).

The \emph{Riesz interpolation property} in $M$ is the following condition: Whenever $x_1,x_2,y_1,y_2 \in M$ with $x_i\le y_j$ for $i,j=1,2$, there exists $z\in M$ such that $x_i\le z\le y_j$ for $i,j=1,2$. If $M$ is cancellative and conical, so that it is the positive cone of a partially ordered abelian group, the Riesz refinement, decomposition, and interpolation properties are all equivalent (e.g., \cite[Proposition 2.1]{poagi}). In general, however, the only relation is that refinement implies decomposition.

For a refinement monoid, unperforation implies separativity. This follows immediately from \cite[Theorem 1]{Chen09}, and it was noted independently in  \cite[Corollary 2.4]{Wunpub}.

An element $p \in M$ is \emph{prime} if $p\le x+y$ always implies $p\le x$ or $p\le y$, for any $x,y\in M$. This follows the definition in \cite{Brook97, Brook01} as opposed to the one in \cite{APW}, which requires prime elements to be additionally non-invertible. The monoid $M$ is called \emph{primely generated} if every element of $M$ is a sum of prime elements. In case $M$ is conical, this is equivalent to the definition used in \cite{APW}. 

An element $x \in M$ is \emph{irreducible} if $x$ is not invertible and  $x=a+b$ only when $a$ or $b$ is invertible, for any $a,b\in M$. The following facts are likely known, but we did not locate any reference.

\begin{lemma}  \label{cancelirredelement}
In a conical refinement monoid $M$, all irreducible elements cancel from sums.
\end{lemma}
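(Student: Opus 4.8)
The plan is to apply the Riesz refinement property once and then read off everything from a short case analysis, using that in a conical monoid the only invertible element is $0$. Thus irreducibility of $x$ amounts to the statement that $x=a+b$ forces $a=0$ or $b=0$ (the additional requirement $x\ne 0$ will turn out to be unnecessary for cancellation).

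Suppose $x$ is irreducible and $x+a=x+b$; I want to conclude $a=b$. First I would refine this equality, obtaining a refinement matrix
$$\bordermatrix{ &x&b\cr x&z_{11}&z_{12}\cr a&z_{21}&z_{22}\cr},$$
so that $x=z_{11}+z_{12}$ and $x=z_{11}+z_{21}$, while $a=z_{21}+z_{22}$ and $b=z_{12}+z_{22}$. The crucial observation is that $x$ occurs both as the first row sum and as the first column sum, so irreducibility may be applied to each of these two decompositions; in both the shared corner is $z_{11}$, and this is what drives the argument.

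Next I would split according to whether $z_{11}=0$. If $z_{11}=0$, then the first row forces $z_{12}=x$ and the first column forces $z_{21}=x$; substituting these into the remaining two equations gives $a=x+z_{22}=b$. If instead $z_{11}\ne 0$, then applying irreducibility to $x=z_{11}+z_{12}$ yields $z_{12}=0$, and applying it to $x=z_{11}+z_{21}$ yields $z_{21}=0$; hence $a=z_{22}=b$. Either way $a=b$, which is exactly the cancellation claimed.

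I do not anticipate a genuine obstacle: the entire proof is one invocation of refinement followed by a two-line case distinction. The only points needing care are to apply irreducibility to \emph{both} appearances of $x$ and to invoke conicality to read ``invertible'' as ``equal to $0$''. It is worth noting that the nonvanishing of $x$ is never used, so only the decomposition half of irreducibility actually enters the argument.
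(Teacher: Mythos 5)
Your proof is correct and is essentially identical to the paper's: the same single refinement of $x+a=x+b$, followed by the same case split on whether the corner entry shared by the two decompositions of $x$ is zero. Your closing observation that the non-invertibility of $x$ is never needed is accurate as well, and is implicitly confirmed by the paper's argument.
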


\begin{proof} Let $a,b,c\in M$ with $a+b=a+c$ and $a$ irreducible. There is a refinement
$$\bordermatrix{ &a&b\cr a&a'&b'\cr c&c'&d'}.$$
Since $a=a'+b'$, either $a'=0$ or $b'=0$. Likewise, either $a'=0$ or $c'=0$. If $a'=0$, we get $a=b'=c'$, and thus $b=a+d'=c$. If $a'\ne 0$, then $b'=c'=0$, and thus $b=d'=c$.
\end{proof}

\begin{proposition}  \label{irredelementgenideal}
Let $(a_i)_{i\in I}$ be a family of distinct irreducible elements in a conical refinement monoid $M$.

{\rm (a)} The submonoid $J := \sum_{i\in I} \Zplus a_i$ is an o-ideal of $M$.

{\rm (b)} The map $f:\bigoplus_{i\in I} \Zplus \rightarrow J$ sending $(m_i)_{i\in I} \mapsto \sum_{i\in I} m_ia_i$ is an isomorphism.
\end{proposition}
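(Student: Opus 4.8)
The plan is to dispatch part (a) quickly and then put the real work into the injectivity half of part (b). Throughout I will lean on the following consequences of conicality, where the only invertible element is $0$: an irreducible element $a$ is nonzero, it admits no decomposition $a=u+v$ with both summands nonzero, and if $x\le a$ then necessarily $x\in\{0,a\}$ (write $a=x+c$ and apply irreducibility together with conicality to the summands $x,c$). A routine induction then upgrades indecomposability to the many-term form: if an irreducible $a$ equals $d_1+\cdots+d_m$, then exactly one $d_k$ equals $a$ and the rest vanish.

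For part (a), $J$ is visibly a submonoid, being the set of finite $\Zplus$-combinations of the $a_i$, closed under addition and containing $0$. To see it is hereditary, I would take $x\le y$ with $y=\sum_i m_ia_i\in J$, enumerate the right-hand side as a sum $b_1+\cdots+b_n$ in which each $b_k$ equals one of the $a_i$, and invoke the Riesz decomposition property to write $x=x_1+\cdots+x_n$ with $x_k\le b_k$ for each $k$. Since each $b_k$ is irreducible, each $x_k$ lies in $\{0,b_k\}$, so $x$ is again a $\Zplus$-combination of the $a_i$ and hence $x\in J$. This makes $J$ an o-ideal.

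For part (b), the map $f$ is clearly a monoid homomorphism and is surjective by the definition of $J$, so everything comes down to injectivity. Suppose $\sum_i m_ia_i=\sum_i n_ia_i$. Using Lemma \ref{cancelirredelement}, I can cancel $\min(m_i,n_i)$ copies of each $a_i$ from both sides (only finitely many indices occur), reducing to the case where the supports $A=\{i:m_i>0\}$ and $B=\{j:n_j>0\}$ are disjoint; it then suffices to prove $A=B=\emptyset$. Assume instead $A\ne\emptyset$ and fix $i_0\in A$, so that $a_{i_0}\le\sum_j n_ja_j$. If $B=\emptyset$ the right-hand side is $0$, forcing $a_{i_0}=0$ by conicality, a contradiction. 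If $B\ne\emptyset$, I enumerate $\sum_j n_ja_j$ as a sum of terms each equal to some $a_j$ with $j\in B$ and apply Riesz decomposition; irreducibility of $a_{i_0}$ then collapses the resulting decomposition so that $a_{i_0}\le a_{j_0}$ for some $j_0\in B$, whence $a_{i_0}=a_{j_0}$. But $i_0\in A$ and $j_0\in B$ with $A\cap B=\emptyset$ give $i_0\ne j_0$, contradicting distinctness of the $a_i$. Hence $A=\emptyset$, and symmetrically $B=\emptyset$, so the exponents agree and $f$ is injective.

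The main obstacle is precisely this last argument in (b): once one reduces to disjoint supports, one must rule out every nontrivial relation among distinct irreducibles, and the delicate point is that Riesz decomposition, irreducibility, and conicality combine to force the comparison $a_{i_0}\le\sum_j n_ja_j$ down to an equality $a_{i_0}=a_{j_0}$ between two generators that were assumed distinct. The cancellation lemma is what makes this reduction legitimate, and distinctness of the family is exactly what delivers the final contradiction.
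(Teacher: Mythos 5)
Your proposal is correct and follows essentially the same route as the paper's: part (a) is the identical argument (Riesz decomposition plus irreducibility forcing each piece of $x$ into $\{0,a_{i_l}\}$), and part (b) rests on the same two ingredients the paper uses, namely Lemma \ref{cancelirredelement} to cancel copies of the $a_i$ and Riesz decomposition plus irreducibility to collapse a comparison $a_{i_0}\le\sum_j n_ja_j$ into an equality of two generators, contradicting distinctness. The only difference is organizational --- the paper inducts on the total coefficient sum, cancelling one copy per step, while you cancel all common copies at once and then rule out disjoint nonempty supports --- so the mathematical content coincides.
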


\begin{proof} (a) Let $b\in M$ and $c\in J$ with $b\le c$. If $c=0$, then $b=0$ because $M$ is conical, whence $b\in J$. Assume that $c \ne 0$, and write $c= \sum_{l=1}^n a_{i_l}$  for some $i_l\in I$. By Riesz decomposition, $b=  \sum_{l=1}^n b_l$ for some $b_l \in M$ with $b_l\le a_{i_l}$. By the irreducibility of the $a_{i_l}$, each $b_l= \eps_l a_{i_l}$ for some $\eps_l \in \{0,1\}$. Thus, $b= \sum_{l=1}^n \eps_l a_{i_l} \in J$.

(b) By definition, $f$ is surjective. To see that $f$ is injective, it suffices to prove the following:
\begin{enumerate}
\item[$(*)$] If $a_1,\dots,a_n$ are distinct irreducible elements in $M$ and $\sum_{i=1}^n m_ia_i= \sum_{i=1}^n m'_ia_i$ for some $m_i,m'_i\in \Zplus$, then $m_i=m'_i$ for all $i$.
\end{enumerate}
We proceed by induction on $t := \sum_{i=1}^n (m_i+m'_i)$. If $t=0$, then $m_i=0=m'_i$ for all $i$.

Now let $t>0$. Without loss of generality, $m'_1>0$. Hence, $a_1\le \sum_{i=1}^n m_ia_i$, so $a_1= \sum_{i=1}^n \sum_{l=1}^{m_i} b_{il}$ with each $b_{il} \le a_i$. Also, each $b_{il} \le a_1$. Some $b_{il} \ne 0$, whence $a_i=b_{il}= a_1$, yielding $i=1$ and $m_1>0$. Cancel $a_1$ from $\sum_{i=1}^n m_ia_i= \sum_{i=1}^n m'_ia_i$, leaving 
$$(m_1-1)a_1+ \sum_{i=2}^n m_ia_i= (m'_1-1)a_1+ \sum_{i=2}^n m'_ia_i.$$
By induction, $m_1-1= m'_1-1$ and $m_i=m'_i$ for all $i\ge2$, yielding $(*)$.
\end{proof}

\begin{definition}
 \label{def:socle}
 Let $M$ be a conical refinement monoid. Then the \emph{pedestal} of $M$, denoted by $\ped (M)$, is the submonoid of $M$ generated by all the irreducible elements of $M$.
 By Proposition \ref{irredelementgenideal}, $\ped (M)$ is an o-ideal of $M$.
 
With an eye on non-stable K-theory, it would seem reasonable to call the
submonoid defined above the socle of $M$.
This is due to the fact that if $R$ is a regular ring (or just a semiprime
exchange ring), then $V(\soc (R_R))\cong \ped (V(R))$,
where $\soc (R_R)$ is the socle of the right $R$-module $R_R$ in the sense of
module theory. However, the concept of the socle of a semigroup (e.g., \cite[Section 6.4]{CP}) is entirely different from our concept of a pedestal. The latter concept is designed for and works well 
in conical refinement monoids, but it may need modification for use with non-refinement monoids.
 \end{definition}

\section{Tame refinement monoids}  \label{tamerefmonoids}

\subsection{Tameness and wildness}  \label{tamewild}

\begin{definition}
 \label{def:wild-tamemonoids} Let $M$ be a refinement monoid. We say that $M$ is \emph{tame} in case $M$ is an inductive limit for some inductive system of finitely generated refinement monoids, and that $M$ is \emph{wild} otherwise.
  \end{definition}
  
\begin{examples}  \label{examples:tame}
Beyond finitely generated refinement monoids themselves, several classes of tame refinement monoids can be identified.

{\bf 1.} Every unperforated cancellative refinement monoid is tame. This follows from theorems of Grillet \cite{Gril} and Effros-Handelman-Shen \cite{EHS}. See Theorem \ref{tamecriterion:stablyfinite} below for details.

{\bf 2.} Any refinement monoid $M$ such that $2x=x$ for all $x\in M$ is tame. Recall that (upper) semilattices correspond exactly to semigroups satisfying the identity $2x=x$ for all elements $x$, where $\vee = +$. A semilattice is called \emph{distributive} if it satisfies Riesz decomposition \cite[p.~117]{Gr71}, and it is well known that this is equivalent to the semilattice satisfying Riesz refinement. Pudl\'ak proved in \cite[Fact 4, p.~100]{Pud} that every distributive semilattice equals the directed union of its finite distributive subsemilattices. In monoid terms, any refinement monoid $M$ satisfying $2x=x$ for all $x\in M$ is the directed union of those of its finite submonoids which satisfy refinement. Thus, such an $M$ is tame. In fact, any refinement monoid of this type is an inductive limit of finite boolean monoids (meaning semilattices of the form ${\mathbf 2}^n$ for $n\in\N$) \cite[Theorem 6.6, Corollary 6.7]{GW01}. 

{\bf 3.} Write $A\sqcup\{0\}$ for the monoid obtained from an abelian group $A$ by adjoining a new zero element. Any such monoid has refinement (e.g., \cite[Corollary 5]{Dobb84}). Inductive limits of monoids of the form $\bigoplus_{i=1}^k (A_i\sqcup\{0\})$ with each $A_i$ finite cyclic were characterized in \cite[Theorem 6.4]{GPW}, and those for which the $A_i$ can be arbitrary cyclic groups were characterized in \cite[Theorem 6.6]{PW06}. (We do not list the conditions here.) All these monoids are tame refinement monoids.

{\bf 4.} A commutative monoid $M$ is said to be {\it regular} in case $2x\le x$ for all $x\in M$.
It has been proved by Pardo and Wehrung \cite[Theorem 4.4]{PWunp} that every regular conical refinement monoid 
is a direct limit of finitely generated regular conical refinement monoids. In particular, these monoids are tame. 

{\bf 5.} A monoid $M(E)$ associated with any directed graph $E$ was defined for so-called row-finite graphs in \cite[p.~163]{AMP}, and then in general in \cite[p.~196]{AG}. These monoids have refinement by \cite[Proposition 4.4]{AMP} and \cite[Corollary 5.16]{AG}, and we prove below that they are tame (Theorem \ref{MEgeneral}).

{\bf 6.} In \cite[Theorem 0.1]{AP}, Pardo  and the first-named author proved that every primely generated conical refinement monoid is tame, thus resolving an open problem from the initial version of the present paper.
\end{examples}

The existence of wild refinement monoids follows, for instance, from the next theorem. Other examples will appear below. The following theorem and many other consequences of tameness rely on Brookfield's result  that every finitely generated refinement monoid is primely generated \cite[Corollary 6.8]{Brook01}, together with properties of primely generated refinement monoids established in \cite{Brook97, Brook01}.

\begin{theorem}  \label{tame:sepunperf}
Every tame refinement monoid is separative, unperforated, and satisfies the Riesz interpolation property.
\end{theorem}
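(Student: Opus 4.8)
The plan is to reduce the entire statement to the finitely generated case and then feed in the structure theory of primely generated refinement monoids. Write $M=\varinjlim_{i\in I}M_i$ as an inductive limit of a directed system of finitely generated refinement monoids, with transition maps $\phi_{ij}\colon M_i\to M_j$ for $i\le j$ and canonical maps $\phi_i\colon M_i\to M$. I would use the standard description of inductive limits in the category of commutative monoids: every element of $M$ has the form $\phi_i(a)$ for some $i\in I$ and $a\in M_i$, and $\phi_i(a)=\phi_i(b)$ holds if and only if $\phi_{ij}(a)=\phi_{ij}(b)$ for some $j\ge i$. Because any finite list of elements of $M$ lifts to a common index (directedness), and any finite list of equalities holding in $M$ can be forced to hold already at a common later index, each of the three conclusions---being expressible through finitely many elements and finitely many equalities---should transfer from the $M_i$ to $M$.

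Concretely, for separativity I would suppose $2x=2y=x+y$ in $M$, lift $x,y$ to $a,b\in M_i$, observe that $2a$, $2b$, $a+b$ have equal images under $\phi_i$, pass to an index $j\ge i$ at which $\phi_{ij}$ equalizes them, apply separativity of $M_j$ to get $\phi_{ij}(a)=\phi_{ij}(b)$, and push forward to conclude $x=y$. Unperforation is handled identically: from $mx+z=my$ I lift $x,y,z$, equalize the single relation at a later stage $M_j$, apply unperforation there to obtain $\phi_{ij}(a)\le\phi_{ij}(b)$, and push the witnessing summand forward. Interpolation needs slightly more care, since its conclusion is existential: given $x_i\le y_j$ with witnesses $x_i+s_{ij}=y_j$ for $i,j=1,2$, I lift the $x_i,y_j,s_{ij}$ to a common index $k$ as $a_i,b_j,t_{ij}$ and then choose a \emph{single} index $l\ge k$ at which all four equalities $\phi_{kl}(a_i+t_{ij})=\phi_{kl}(b_j)$ hold simultaneously, so that $a'_i\le b'_j$ in $M_l$. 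An interpolant $z'\in M_l$ then maps to an interpolant $z=\phi_l(z')$ in $M$, because both $x_i\le z$ and $z\le y_j$ follow by applying $\phi_l$ to the witnessing equations in $M_l$.

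It remains to settle the finitely generated case, where the genuine content lies. By Brookfield's theorem \cite[Corollary 6.8]{Brook01} each $M_i$ is primely generated, so it suffices to know that primely generated refinement monoids are unperforated and satisfy Riesz interpolation; these are exactly the kind of facts supplied by the structural analysis of \cite{Brook97, Brook01}. Separativity of each $M_i$ then comes for free from unperforation via the remark (following \cite{Chen09}) that unperforation implies separativity in any refinement monoid.

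The step I expect to be the main obstacle is the base case itself: all three properties for primely generated refinement monoids rest on Brookfield's deep classification of primes and the ensuing cancellation and order results, and verifying interpolation at this level (rather than merely refinement, which by the discussion in \S\ref{background} does \emph{not} imply interpolation in general) is the delicate input. Within the transfer argument that I must actually carry out, the subtle point is the interpolation case: unlike separativity and unperforation, whose hypotheses reduce to a fixed finite set of equalities, interpolation has an existentially quantified conclusion, so the care goes into arranging a single stage $M_l$ at which every one of the finitely many order witnesses is realized at once, guaranteeing that the interpolant produced downstairs descends to a bona fide interpolant in $M$.
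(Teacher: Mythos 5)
Your proposal is correct and takes essentially the same route as the paper: the paper's proof simply notes that all three properties pass to inductive limits (which you spell out, including the extra care for interpolation's existential conclusion) and then cites Brookfield's results on (primely generated) finitely generated refinement monoids for the base case, exactly as you do. The only cosmetic difference is that you obtain separativity of the finitely generated pieces from unperforation via Chen's result, while the paper cites Brookfield's separativity statement directly.
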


\begin{proof} These properties obviously pass to inductive limits. For finitely generated refinement monoids, the first two properties were established in \cite[Theorem 4.5, Corollaries 5.11, 6.8]{Brook01}, and the third follows from \cite[Propositions 9.15 and 11.8]{Brook97}.
\end{proof}

By \cite[Theorem 1]{Gril} or \cite[Theorem 5.1]{Dob}, any monoid can be embedded in a refinement monoid. More strongly, for any monoid $M'$, there exists an embedding $\phi: M'\rightarrow M$ into a refinement monoid $M$ such that $\phi$ is also an embedding for the algebraic ordering \cite[p.~112]{W98}. If $M'$ is either perforated or not separative, then $M$ has the same property, and so by Theorem \ref{tame:sepunperf}, $M$ cannot be tame. Explicit examples of perforated refinement monoids (even cancellative ones) were constructed in \cite[Examples 11.17, 15.9]{Brook97}; these provide further wild monoids. Wild refinement monoids which are unperforated and separative also exist. One example is the monoid to which \cite{MDS} is devoted; others will be constructed below (see Theorems \ref{Mwild} and \ref{Mbarwild}).

\begin{remark}  \label{conicaltame}
A conical tame refinement monoid $M$ must be an inductive limit of conical finitely generated refinement monoids, as follows.  Write $M=\varinjlim _{i\in I} M_i$ for an inductive system of finitely generated refinement monoids $M_i$, with transition maps $\psi_{ij}: M_i \rightarrow M_j$ for $i\le j$ and limit maps $\psi^i: M_i \rightarrow M$. For each $i\in I$, the group of units $U(M_i)$ is an o-ideal of $M_i$, and the quotient $\ol{M}_i := M_i/U(M_i)$ is a conical finitely generated refinement monoid. The maps $\psi_{ij}$ and $\psi^i$ induce homomorphisms $\psibar_{ij} : \ol{M}_i \rightarrow \ol{M}_j$ and $\psibar^{\,i} : \ol{M}_i \rightarrow M$, and the monoids $\ol{M}_i$ together with the transition maps $\psibar_{ij}$ form an inductive system. It is routine to check that $M$ together with the maps $\psibar^{\,i}$ is an inductive limit for this system.
\end{remark}

\begin{proposition}
 \label{prop:tame-closed-under-quotients}
 Let $M$ be a tame refinement monoid and let $J$ be an o-ideal of $M$. Then both $J$ and $M/J$ are tame refinement monoids.
\end{proposition}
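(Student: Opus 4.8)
The plan is to build inductive systems for $J$ and $M/J$ out of a presentation $M = \varinjlim_{i\in I} M_i$ of $M$ as an inductive limit of finitely generated refinement monoids, with transition maps $\psi_{ij}\colon M_i\to M_j$ (for $i\le j$) and limit maps $\psi^i\colon M_i\to M$. The basic observation is that the preimage $J_i := (\psi^i)^{-1}(J)$ is an o-ideal of $M_i$ for every $i$: it is obviously a submonoid, and it is hereditary because monoid homomorphisms preserve the algebraic ordering, so $x\le y$ in $M_i$ with $y\in J_i$ forces $\psi^i(x)\le\psi^i(y)\in J$ and hence $\psi^i(x)\in J$. Since $\psi^i = \psi^j\circ\psi_{ij}$, one checks at once that $\psi_{ij}(J_i)\subseteq J_j$, so the restrictions of the $\psi_{ij}$ make $(J_i)$ into an inductive system and the induced maps make $(M_i/J_i)$ into one as well.

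For $M/J$, first note that each $M_i/J_i$ is a finitely generated conical refinement monoid: it is finitely generated because the images of the generators of $M_i$ generate it, and it is a conical refinement monoid because quotients of refinement monoids by o-ideals are such (\S\ref{background}). I would then show that the maps $M_i/J_i\to M/J$ induced by $\pi\circ\psi^i$, where $\pi\colon M\to M/J$ is the quotient map (these are well defined since $\psi^i(J_i)\subseteq J$), exhibit $M/J$ as $\varinjlim M_i/J_i$. Surjectivity is immediate from $M=\bigcup_i\psi^i(M_i)$; for the colimit identification I would take two classes agreeing in $M/J$, lift the witnessing equation $\psi^i(y)+a=\psi^{i'}(y')+b$ (with $a,b\in J$) into a common $M_k$, observe that the lifts of $a$ and $b$ land in $J_k$ precisely because $(\psi^k)^{-1}(J)=J_k$, and then push forward to some $M_l$ in which the two classes already coincide modulo $J_l$. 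This is the standard verification that the quotient construction commutes with directed inductive limits; it is routine but is where I would be most careful, since it uses directedness and the explicit description of $\equiv_J$.

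For $J$, the restrictions of the $\psi^i$ give maps $J_i\to J$ which exhibit $J$ as $\varinjlim J_i$ by the same, but easier, colimit check (every element of $J$ is represented in some $M_i$, and automatically in $J_i$, and two representatives agreeing in $M$ agree in some $J_l$). Each $J_i$ is a refinement monoid, since an o-ideal of a refinement monoid inherits refinement: any refinement of $x_1+x_2=y_1+y_2$ with all terms in $J_i$ has every entry $z_{ij}\le x_i\in J_i$, hence in $J_i$. The one genuinely nontrivial point, and the step I expect to be the main obstacle, is that each $J_i$ is \emph{finitely generated}. Here I would invoke Brookfield's theorem that every finitely generated refinement monoid is primely generated \cite[Corollary 6.8]{Brook01}: decomposing a finite generating set of $M_i$ into primes produces a finite generating set $\{p_1,\dots,p_m\}$ of $M_i$ consisting of primes. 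Then $J_i$ is generated by $\{p_k : p_k\in J_i\}$, because any $x\in J_i$ is a sum $\sum_k n_k p_k$, and each $p_k$ occurring with $n_k>0$ satisfies $p_k\le x\in J_i$, whence $p_k\in J_i$ by heredity. Thus $J_i$ is finitely generated, which completes the argument that $J$ is tame.
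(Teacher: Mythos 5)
Your proof is correct and follows essentially the same route as the paper: the same o-ideals $J_i=(\psi^i)^{-1}(J)$, the same induced inductive systems for $J$ and $M/J$, and the same colimit verifications (which the paper, like you, treats as routine). The only divergence is in the step you single out as the main obstacle, the finite generation of $J_i$, and there your appeal to Brookfield's theorem is a superfluous detour: primality of the $p_k$ is never actually used in your argument. All you use is that $n_k>0$ implies $p_k\le x$, together with heredity of $J_i$, and that works verbatim for an arbitrary finite generating set $\{x_1,\dots,x_n\}$ of $M_i$. This is exactly the paper's argument, isolated there as the elementary fact that an o-ideal $K$ of any finitely generated commutative monoid $N$ is finitely generated -- indeed generated by those generators of $N$ that happen to lie in $K$. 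So you can delete the citation of \cite[Corollary 6.8]{Brook01} from this step, which makes it elementary and independent of refinement; otherwise your write-up matches the paper's proof.
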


\begin{proof} Write $M=\varinjlim _{i\in I} M_i$ for an inductive system of finitely generated refinement monoids $M_i$, with transition maps $\psi_{ij}: M_i \rightarrow M_j$ for $i\le j$ and limit maps $\psi^i: M_i \rightarrow M$. For each $i\in I$, the set $J_i := (\psi^i)^{-1}(J)$ is an o-ideal of $M_i$, and the quotient $\ol{M}_i := M_i/J_i$ is a (conical) finitely generated refinement monoid. Observe that the maps $\psi_{ij}$ and $\psi^i$ induce homomorphisms $\psibar_{ij} : \ol{M}_i \rightarrow \ol{M}_j$ and $\psibar^{\,i} : \ol{M}_i \rightarrow M/J$, and the monoids $\ol{M}_i$ together with the transition maps $\psibar_{ij}$ form an inductive system. A routine check verifies that $M/J$ together with the maps $\psibar^{\,i}$ is an inductive limit for this system. Therefore $M/J$ is tame.

As is also routine, $J$ together with the restricted maps $\psi^i|_{J_i}$ is an inductive limit for the inductive system consisting of the $J_i$ and the maps $\psi_{ij}|_{J_i} : J_i \rightarrow J_j$. Each $J_i$ is a refinement monoid, and once we check that it is finitely generated, we will have shown that $J$ is tame. Thus, it just remains to verify the following fact:
\begin{enumerate}
\item[$\bullet$] If $N$ is a finitely generated monoid and $K$ an o-ideal of $N$, then $K$ is a finitely generated monoid.
\end{enumerate}
We may assume that $K$ is nonzero. Let $\{x_1,\dots,x_n\}$ be a finite set of generators for $N$. After permuting the indices, we may assume that the $x_i$ which lie in $K$ are exactly $x_1,\dots,x_m$, for some $m\le n$. Given $x \in K$,  write $x = a_1x_1+ \cdots+ a_nx_n$ for some $a_i \in \Zplus$. Whenever $a_i>0$, we have $x_i \le x$ and so $x_i \in K$, whence $i \le m$. Consequently, $x = a_1x_1+ \cdots+ a_mx_m$, proving that $K$ is generated by $x_1,\dots,x_m$. 
\end{proof}

\begin{theorem}  \label{tamecriterion}
A commutative  monoid $M$ is a tame refinement monoid if and only if
\begin{enumerate}
\item[$(\dagger)$] For each finitely generated submonoid $M' \subseteq M$, the inclusion map $M' \rightarrow M$ factors through a finitely generated refinement monoid.
\end{enumerate}
\end{theorem}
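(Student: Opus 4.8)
The plan is to prove both implications directly from the definition of tameness, treating the forward direction as a lifting-of-relations argument and the reverse direction as an explicit colimit construction.

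For the forward implication, suppose $M=\varinjlim_{i\in I}M_i$ is an inductive limit of finitely generated refinement monoids, with transition maps $\psi_{ij}$ and limit maps $\psi^i$. Given a finitely generated submonoid $M'\subseteq M$ with generators $g_1,\dots,g_n$, I would first use directedness of $I$ to find a single index $i$ and elements $x_k\in M_i$ with $\psi^i(x_k)=g_k$ for all $k$. Since $M'$ is finitely generated it is finitely presented (R\'edei's theorem), so it is determined by finitely many relations $\sum_k a_{lk}g_k=\sum_k b_{lk}g_k$. Each such relation holds in $M$, hence, by the standard description of inductive limits of commutative monoids, becomes an equality after applying some $\psi_{ij_l}$; choosing $j\ge j_l$ for all $l$, all the relations hold among the $\psi_{ij}(x_k)$ in $M_j$. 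Consequently $g_k\mapsto\psi_{ij}(x_k)$ extends to a well-defined homomorphism $\theta:M'\to M_j$, and $\psi^j\circ\theta$ agrees with the inclusion on generators, so $M'\xrightarrow{\theta}M_j\xrightarrow{\psi^j}M$ is the required factorization, giving $(\dagger)$.

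For the reverse implication, assume $(\dagger)$. I would first check that $M$ is a refinement monoid: given $x_1+x_2=y_1+y_2$ in $M$, factor the inclusion of the submonoid they generate through a finitely generated refinement monoid $R$, refine the image equation inside $R$, and push the resulting entries back down to $M$ via the structure map. Then I would exhibit $M$ as an inductive limit of finitely generated refinement monoids. Let $\Lambda$ be the set of quadruples $\lambda=(M_\lambda,R_\lambda,\theta_\lambda,\pi_\lambda)$, where $M_\lambda$ is a finitely generated submonoid of $M$, $R_\lambda$ a finitely generated refinement monoid, and $\theta_\lambda:M_\lambda\to R_\lambda$, $\pi_\lambda:R_\lambda\to M$ homomorphisms with $\pi_\lambda\theta_\lambda$ the inclusion; by $(\dagger)$ there is at least one such quadruple over every finitely generated submonoid. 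I would order $\Lambda$ by declaring $\lambda\le\mu$ iff $\pi_\lambda(R_\lambda)\subseteq M_\mu$ (together with the reflexive pairs), and define transition maps $\sigma_{\lambda\mu}:=\theta_\mu\circ\pi_\lambda'$, where $\pi_\lambda':R_\lambda\to M_\mu$ is the corestriction of $\pi_\lambda$, available because $\pi_\lambda(R_\lambda)\subseteq M_\mu$.

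The main point, and the step I expect to require the most care, is verifying that this really is an inductive system with limit $M$. Directedness holds because the finitely generated submonoid generated by $\pi_\lambda(R_\lambda)\cup\pi_\mu(R_\mu)$ can serve as the $M$-component of a common upper bound. The key computation is that $\sigma_{\mu\nu}\circ\sigma_{\lambda\mu}=\sigma_{\lambda\nu}$: this holds because $\pi_\mu\theta_\mu$ is an inclusion, so the corestrictions of the various $\pi$'s compose correctly, and likewise $\pi_\mu\circ\sigma_{\lambda\mu}=\pi_\lambda$, so the $\pi_\lambda$ form a compatible cocone to $M$. Finally I would check the two conditions identifying $M$ as the colimit: the maps $\pi_\lambda$ are jointly surjective, since every $x\in M$ lies in the image of some $\pi_\lambda$ by applying $(\dagger)$ to the submonoid generated by $x$; and any two elements with the same image are eventually identified, for if $\pi_\lambda(a)=\pi_\mu(b)=x$, then at a common upper bound $\nu$ one computes $\sigma_{\lambda\nu}(a)=\theta_\nu(x)=\sigma_{\mu\nu}(b)$. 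Hence $M=\varinjlim_\lambda R_\lambda$ is a tame refinement monoid. The bookkeeping around reflexivity of the preorder and the compatibility of the chosen transition maps is the only genuinely delicate part; everything else is routine.
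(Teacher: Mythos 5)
Your forward implication is correct, and it takes a genuinely different route from the paper's: you lift the generators of $M'$ to a single stage $M_i$, invoke R\'edei's theorem that finitely generated commutative monoids are finitely presented, and push the finitely many defining relations forward to a stage $M_j$ where they all hold, so that $g_k\mapsto\psi_{ij}(x_k)$ induces the desired factorization of the inclusion. The paper instead deduces both implications from \cite[Lemma 4.1, Remark 4.3]{GPW}, whose kernel condition ($\ker\phi=\ker\psi$) does the work that finite presentability does for you; both routes are valid.

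The reverse implication, however, has a genuine gap, and it sits exactly at the point you set aside as bookkeeping. The relation you define ($\lambda\le\mu$ iff $\pi_\lambda(R_\lambda)\subseteq M_\mu$, plus reflexive pairs) is indeed a transitive, directed preorder, but the data $(R_\lambda,\sigma_{\lambda\mu})$ is not an inductive system over it: the preorder can contain cycles $\lambda\le\mu\le\lambda$ with $\lambda\ne\mu$, and the system axioms then force $\sigma_{\lambda\mu}$ and $\sigma_{\mu\lambda}$ to be mutually inverse isomorphisms, which your maps need not be. Concretely, take $M=\Zplus$; let $\lambda$ have $M_\lambda=R_\lambda=\Zplus$ with $\theta_\lambda=\pi_\lambda=\id$, and let $\mu$ have $M_\mu=\Zplus$, $R_\mu=(\Zplus)^2$, $\theta_\mu(n)=(n,0)$, $\pi_\mu(a,b)=a+b$. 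Then $\pi_\lambda(R_\lambda)=M_\mu$ and $\pi_\mu(R_\mu)=M_\lambda$, so $\lambda\le\mu\le\lambda$; but $\sigma_{\lambda\mu}\sigma_{\mu\lambda}(a,b)=(a+b,0)$, so the required identity $\sigma_{\lambda\mu}\sigma_{\mu\lambda}=\sigma_{\mu\mu}=\id_{R_\mu}$ fails. In fact no choice of transition maps can work over this index preorder: $\lambda$ and $\mu$ are isomorphic objects of the preorder category, so any functor must send them to isomorphic monoids, yet $R_\lambda\not\cong R_\mu$. Hence the assertion $M=\varinjlim_\lambda R_\lambda$ does not yet make sense. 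The gap is repairable by forcing antisymmetry: replace $\Lambda$ by $\Lambda\times\N$, declaring $(\lambda,m)\le(\mu,n)$ iff $(\lambda,m)=(\mu,n)$, or $m<n$ and $\pi_\lambda(R_\lambda)\subseteq M_\mu$. There are then no nontrivial cycles, and your verifications of transitivity, directedness, the cocycle identity $\sigma_{\mu\nu}\sigma_{\lambda\mu}=\theta_\nu(\pi_\mu'\theta_\mu)\pi_\lambda'=\sigma_{\lambda\nu}$, the cocone identity $\pi_\mu\sigma_{\lambda\mu}=\pi_\lambda$, and the two colimit conditions all go through verbatim. Alternatively, this is precisely the bookkeeping that \cite[Lemma 4.1]{GPW} packages once and for all, which is why the paper routes its proof through that lemma.
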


\begin{proof} By \cite[Lemma 4.1, Remark 4.3]{GPW}, $M$ is a direct limit of finitely generated refinement monoids (and thus a tame refinement monoid)  
if and only if the following conditions hold:
\begin{enumerate}
\item For each $x\in M$, there exist a finitely generated refinement monoid $N$ and a homomorphism $\phi: N\rightarrow M$ such that $x\in \phi(N)$.
\item For each finitely generated refinement monoid $N$, any homomorphism $\phi:N \rightarrow M$ equals the composition of homomorphisms $\psi:N \rightarrow N'$ and $\phi': N'\rightarrow M$ such that $N'$ is a finitely generated refinement monoid and $\ker\phi= \ker\psi$.
\end{enumerate}

Condition (1) is always satisfied, since for each $x\in M$, there is a homomorphism $\phi: \Zplus \rightarrow M$ such that $\phi(1)= x$.

Assume first that $(\dagger)$ holds, and let $\phi:N \rightarrow M$ be a homomorphism with $N$ a finitely generated refinement monoid. 
By $(\dagger)$, the inclusion map $\phi(N) \rightarrow M$ is a composition of homomorphisms $\theta: \phi(N) \rightarrow N'$ and $\phi': N'\rightarrow M$ with $N'$ a finitely generated refinement monoid. Then $\phi= \phi' \theta \phi_0$, where $\phi_0 : N \rightarrow \phi(N)$ is $\phi$ with codomain restricted to $\phi(N)$. Since $\theta$ is injective, it is clear that $\ker\phi= \ker\phi_0= \ker \theta\phi_0$.

Conversely, assume that  (2) holds, and let $M'$ be a finitely generated submonoid of $M$. Choose elements $x_1,\dots,x_n$ that generate $M'$, set $N := (\Zplus)^n$, and define $\phi: N\rightarrow M$ by the rule $\phi(m_1,\dots,m_n) = \sum_i m_ix_i$. This provides us with a finitely generated refinement monoid $N$ and a homomorphism $\phi: N\rightarrow M$ such that $M' = \phi(N)$. Write $\phi= \iota\phi_0$ where $\phi_0: N\rightarrow M'$ is $\phi$ with codomain restricted to $M'$ and $\iota: M' \rightarrow M$ is the inclusion map. Now let $\psi$, $\phi'$, and $N'$ be as in (2). Since $\ker\phi_0= \ker\phi= \ker\psi$, there is a unique homomorphism $\psi': M' \rightarrow N'$ such that $\psi'\phi_0 = \psi$. Then $\phi'\psi'\phi_0= \phi'\psi= \phi= \iota\phi_0$ and so $\phi'\psi'= \iota$. Thus $\iota$ factors through $N'$, as required.
\end{proof}

\begin{proposition}  \label{classoftame}
The class of tame refinement monoids is closed under direct sums, inductive limits, and retracts.
\end{proposition}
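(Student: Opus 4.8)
The plan is to treat the three closure properties separately, using in each case the intrinsic criterion $(\dagger)$ of Theorem \ref{tamecriterion} together with conditions (1) and (2) from its proof. Two elementary observations will be invoked repeatedly: a finite direct sum of finitely generated refinement monoids is again a finitely generated refinement monoid (finite generation is clear, and refinement is checked componentwise), and any finitely generated submonoid of a direct sum $\bigoplus_{\alpha} M_\alpha$ is contained in a finite subsum $\bigoplus_{\alpha\in F} M_\alpha$. Since verifying $(\dagger)$ already yields, via Theorem \ref{tamecriterion}, that the monoid in question is a \emph{tame refinement} monoid, I will not need to argue the refinement property separately.

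Retracts are the easiest. Given a retract $N$ of a tame refinement monoid $M$, with structure maps $\iota:N\to M$ and $\rho:M\to N$ satisfying $\rho\iota=\id_N$, the map $\iota$ is injective, so I identify $N$ with a submonoid of $M$ on which $\rho$ restricts to the identity. For a finitely generated submonoid $N'\subseteq N$, criterion $(\dagger)$ for $M$ factors the inclusion $N'\to M$ as $N'\xrightarrow{\theta}P\xrightarrow{g}M$ with $P$ a finitely generated refinement monoid; composing with $\rho$ then exhibits the inclusion $N'\to N$ as $(\rho g)\theta$, verifying $(\dagger)$ for $N$. The direct-sum case reduces to the finite case by the second preliminary observation, and for a finite sum $M_1\oplus\cdots\oplus M_k$ I would verify $(\dagger)$ by confining a given finitely generated submonoid inside $M_1'\oplus\cdots\oplus M_k'$ for finitely generated $M_i'\subseteq M_i$, factoring each inclusion $M_i'\to M_i$ through a finitely generated refinement monoid $P_i$ by $(\dagger)$, and taking the direct sum of these factorizations through the finitely generated refinement monoid $P_1\oplus\cdots\oplus P_k$.

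The substantive case is inductive limits. Writing $M=\varinjlim_\alpha M_\alpha$ with each $M_\alpha$ tame, transition maps $\mu_{\alpha\beta}$ and limit maps $\mu^\alpha$, I would establish condition (2) for $M$ (condition (1) being automatic), which by \cite{GPW} yields that $M$ is a direct limit of finitely generated refinement monoids, hence tame. So let $\phi:N\to M$ be a homomorphism with $N$ a finitely generated refinement monoid and chosen generators $u_1,\dots,u_n$. The images $\phi(u_i)$ originate in some $M_\alpha$, and I would first pass to a large enough index $\beta$ so that $\phi$ lifts to a homomorphism $\phi_\beta:N\to M_\beta$ with $\mu^\beta\phi_\beta=\phi$. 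Applying condition (2) for the tame monoid $M_\beta$ to $\phi_\beta$ produces a factorization $\phi_\beta=g\psi$ with $\psi:N\to N'$, $g:N'\to M_\beta$, $N'$ a finitely generated refinement monoid, and $\ker\psi=\ker\phi_\beta$. Setting $\phi':=\mu^\beta g$ gives $\phi=\phi'\psi$, and it then remains to arrange $\ker\psi=\ker\phi$.

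The kernel matching is the main obstacle. One always has $\ker\phi_\beta\subseteq\ker\phi$, but the reverse inclusion can fail at a fixed stage, since an equality holding in the colimit $M$ need only hold after passing to a higher index. The remedy rests on two classical facts about finitely generated commutative monoids: every such monoid is finitely presented (so $\phi_\beta$ can be defined by lifting the finitely many defining relations of $N$, once $\beta$ is large), and every congruence on such a monoid is finitely generated (so $\ker\phi$ is generated by finitely many pairs). Since each generating pair of $\ker\phi$ is identified by $\phi$, hence becomes an equality at some finite stage, directedness lets me choose $\beta$ large enough that all these pairs already lie in $\ker\phi_\beta$; as $\ker\phi_\beta$ is a congruence, this forces $\ker\phi\subseteq\ker\phi_\beta$, whence $\ker\phi_\beta=\ker\phi$. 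Carrying out the lifting and this kernel matching at a single sufficiently large $\beta$ completes the verification of condition (2), and thus the proof.
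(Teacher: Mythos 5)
Your proof is correct, but it follows a genuinely different route from the paper's, which is essentially a proof by citation: closure under inductive limits is quoted from \cite[Corollary 4.2, Remark 4.3]{GPW}, closure under retracts ``follows as in \cite[Lemma 4.4]{GPW}'', finite direct sums are declared clear, and arbitrary direct sums are handled either as inductive limits of finite ones or via Theorem \ref{tamecriterion}. What you have done, in effect, is reconstruct the content of those cited results from the intrinsic criterion. Your retract and direct-sum arguments via $(\dagger)$ are clean and complete, and your inductive-limit argument---lifting $\phi\colon N\to M$ to a finite stage $M_\beta$ using finite presentability of $N$, then using finite generation of the congruence $\ker\phi$ together with directedness to push the kernel equality $\ker\phi_\beta=\ker\phi$ to a sufficiently high index before invoking condition (2) for the tame monoid $M_\beta$---is precisely the mechanism underlying \cite[Lemma 4.1, Corollary 4.2]{GPW}. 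The two classical facts you invoke (R\'edei's theorem that finitely generated commutative monoids are finitely presented, and the consequence that every congruence on such a monoid is finitely generated) are exactly what makes that machinery run, and your kernel-matching step correctly handles the one genuine subtlety, namely that an equality in the colimit need only hold at a higher index. What the paper's approach buys is brevity; what yours buys is self-containedness and an explicit display of where finite presentation, finitely generated congruences, and directedness enter the argument. Both are valid proofs of the proposition.
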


\begin{proof} Closure under inductive limits follows from \cite[Corollary 4.2, Remark 4.3]{GPW}, and then closure under retracts follows as in \cite[Lemma 4.4]{GPW}. Closure under finite direct sums is clear, and closure under arbitrary direct sums follows because such direct sums are inductive limits of finite direct sums, or by an application of Theorem \ref{tamecriterion}.
\end{proof}

\begin{definitions}  \label{primeandprimitive}
A monoid $M$ is \emph{antisymmetric} if the algebraic ordering on $M$ is antisymmetric (and thus is a partial order). In particular, antisymmetric monoids are conical, and any stably finite conical monoid is antisymmetric. If $M$ is an antisymmetric refinement monoid, its prime elements coincide with the \emph{pseudo-indecomposable} elements of \cite[p.~845]{Pierce}. A \emph{primitive monoid} \cite[Definition 3.4.1]{Pierce} is any antisymmetric, primely generated refinement monoid. Pierce characterized these monoids as follows.
\end{definitions}

\begin{proposition}  \label{primitivepresentation}
{\rm\cite[Proposition 3.5.2]{Pierce}}
The primitive monoids are exactly the commutative monoids with presentations of the form
\begin{equation}  \label{Dtriangle}
\langle D \mid e+f=f \;\, \text{for all} \;\, e,f\in D \;\, \text{with} \;\, e\tri f \rangle,
\end{equation}
where $D$ is a set and $\tri$ is a transitive, antisymmetric relation on $D$. If a monoid $M$ has such a presentation, then $D$ equals the set of nonzero prime elements of $M$, and elements $e,f\in D$ satisfy $e\tri f$ if and only if $e+f=f$.
\end{proposition}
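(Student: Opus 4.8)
The plan is to treat the two implications through the normal form of the presented monoid, with everything resting on one order-theoretic lemma. First I would record that, in any antisymmetric refinement monoid $M$, two distinct nonzero primes $e,f$ satisfy $e\le f$ if and only if $e+f=f$: the implication $e+f=f\Rightarrow e\le f$ is immediate (take the witness $z=f$), while for the converse one writes $e+z=f$, applies primality of $f$ to the relation $f\le f=e+z$, obtaining either $f\le e$ (impossible by antisymmetry) or $f\le z$; writing $z=f+w$ then gives $f=(e+w)+f$, so $e+f\le f$ and hence $e+f=f$ by antisymmetry. A direct calculation using commutativity shows that the relation $e\tri f:\Leftrightarrow e+f=f$ on the nonzero primes is transitive and antisymmetric (with $e\tri e$ exactly when $e$ is idempotent, i.e.\ $2e=e$). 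This reduces all comparisons of sums of primes to the combinatorics of $(D,\tri)$.

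For the forward implication, let $M$ be primitive and let $D$ be its set of nonzero primes. Since $M$ is conical and primely generated, $D$ generates $M$, so sending each generator to the corresponding prime defines a surjection $\pi\colon \widetilde M\to M$, where $\widetilde M:=\langle D\mid e+f=f,\ e\tri f\rangle$ (well defined because each defining relation holds in $M$ by the definition of $\tri$). The work is to prove $\pi$ injective. Every element of $\widetilde M$ reduces, by repeatedly applying $e+f=f$ to absorb a $\tri$-smaller generator into a present $\tri$-larger one and to collapse idempotents, to a \emph{normal form}: a $\tri$-antichain $S\subseteq D$ together with a multiplicity function $n\colon S\to\N$ with $n(g)=1$ whenever $g$ is idempotent. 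If two normal forms $(S,n)$ and $(T,m)$ have the same image in $M$, then the lemma together with Riesz decomposition forces $S=T$: each $g\in S$ satisfies $g\le\sum_{h\in T}m_h h$, so by primality $g\le h$ for some $h\in T$, hence $g=h$ or $g\tri h$, and the antichain condition with transitivity and antisymmetry of $\tri$ rules out the strict case. To match the multiplicities of a free prime $e\in S=T$ I would use the homomorphism $\lambda_e\colon M\to\Zplus\cup\{\infty\}$, $\lambda_e(x)=\sup\{k:ke\le x\}$, whose additivity follows from refinement (refining $ke=a+b$ against $e+\cdots+e$ and using primality of $e$ shows $\lambda_e(a)+\lambda_e(b)\ge k$); since $\lambda_e(e)=1$ and $\lambda_e(g)=0$ for the other antichain elements $g$, one reads off $n(e)=\lambda_e(x)=m(e)$. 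Thus $(S,n)=(T,m)$ and $\pi$ is an isomorphism. The same normal-form analysis identifies the nonzero primes of $\widetilde M$ with the generators $[e]$ and shows $[e]\tri[f]\Leftrightarrow[e]+[f]=[f]$, giving the final assertions of the statement.

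For the reverse implication, given a transitive antisymmetric relation $\tri$ on a set $D$, I would realize $\widetilde M$ concretely as the set of normal forms above, with addition given by taking the union of supports, adding multiplicities, and reducing. Antisymmetry and conicity are visible directly from the normal form, prime generation is clear since the $[e]$ are primes that generate, and the identification of the nonzero primes and of $\tri$ proceeds exactly as in the previous paragraph. What remains, and what I expect to be the main obstacle, is the Riesz refinement property for this concrete monoid: unlike the other properties it cannot be read off a normal form, and the coexistence of free primes, idempotent primes, and the absorption relations makes the bookkeeping delicate. I would reduce to the case of finite $D$, using that refinement passes to directed colimits and that $\widetilde M$ is the directed union of the submonoids attached to finite subsets $D_0\subseteq D$ (with induced relation $\tri\cap(D_0\times D_0)$), and then verify refinement there by a direct combinatorial argument, refining support-by-support along the $\tri$-order and distributing absorbed generators consistently.

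A secondary technical point, needed already to make the normal form and hence the concrete addition well defined, is confluence of the absorption rewriting $e+f\mapsto f$; here termination is obvious (each step strictly decreases total multiplicity or support size) and local confluence can be checked on the finitely many ways two absorptions overlap, so that the normal form of an element is unique. With this in hand the reverse implication rests entirely on the refinement verification for finite $(D,\tri)$, which is the crux of the whole proposition.
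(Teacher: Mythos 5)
The paper gives no proof of this proposition at all: it is quoted from Pierce \cite[Proposition 3.5.2]{Pierce} as a known result. So your proposal can only be judged on its own terms, and on those terms it is half a proof. The forward direction (every primitive monoid has such a presentation) is essentially complete and correct: your key lemma for distinct nonzero primes ($e\le f$ iff $e+f=f$, via primality of $f$ and antisymmetry) is right; injectivity of $\pi$ only needs \emph{existence} of normal forms in the presented monoid, which your termination argument gives; the round trip $g\le h\le g'$ through the two antichains, with transitivity and antisymmetry, correctly forces $S=T$; and the additivity of $\lambda_e$ for a free prime $e$ is exactly the standard refinement-plus-primality argument, so the multiplicities match. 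The confluence remark (local confluence of the absorption rewriting, using transitivity of $\tri$) is also fine and does give uniqueness of normal forms where you need it.

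The genuine gap is in the reverse direction, and you name it yourself: you never prove that $M(D,\tri)$ satisfies Riesz refinement. This is not a deferrable detail; it is the substantive half of the proposition, the part that makes ``exactly'' true, since without it the presented monoids are not known to be refinement monoids, hence not known to be primitive. Conicality, antisymmetry, prime generation, and the identification of the primes and of $\tri$ all fall out of the normal-form analysis, but refinement does not: ``refining support-by-support along the $\tri$-order and distributing absorbed generators consistently'' is a statement of intent, not an argument. The delicate cases are precisely those your phrase glosses over -- a generator occurring in $a_1$ or $a_2$ that is absorbed in the reduced form of $a_1+a_2$ must still be accounted for in some entry of the refinement matrix whose column sum does not see it, and idempotent multiplicities collapse, so the naive multiplicity bookkeeping of the free commutative monoid fails. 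The reduction to finite $D$ is legitimate (refinement passes to directed unions), but for finite $D$ you still owe either an induction (say on $|D|$, splitting off a $\tri$-maximal generator) or an explicit construction of the matrix entries, and neither is given. As it stands, your proposal reduces Pierce's proposition to its hardest ingredient and then cites that ingredient as ``a direct combinatorial argument.''
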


Given any set $D$ equipped with a transitive, antisymmetric relation $\tri$, let us write $M(D,\tri)$ for the monoid with presentation \eqref{Dtriangle}.

Since antisymmetric monoids are conical, primitive monoids are tame by \cite[Theorem 0.1]{AP}. However, the result for primitive monoids is immediate from Pierce's results, as follows.

\begin{theorem}  \label{primitive=>tame}
Every primitive monoid is a tame refinement monoid.
\end{theorem}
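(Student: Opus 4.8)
The plan is to use Pierce's presentation theorem (Proposition \ref{primitivepresentation}) to realize any primitive monoid as an inductive limit of its obvious finitely generated primitive submonoids, and then invoke the fact that primitive monoids are refinement monoids by definition. So let $M$ be primitive; by Proposition \ref{primitivepresentation} we may assume $M = M(D,\tri)$ for a set $D$ carrying a transitive, antisymmetric relation $\tri$, with $D$ the set of nonzero prime elements of $M$. The indexing set for the system will be the collection of finite subsets $F \subseteq D$, directed by inclusion.

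For each finite $F \subseteq D$, the restriction $\tri|_F$ is again transitive and antisymmetric, so $M(F,\tri|_F)$ is defined. By the forward implication of Proposition \ref{primitivepresentation}, $M(F,\tri|_F)$ is a primitive monoid, hence a refinement monoid, and it is finitely generated (by $F$). Whenever $F \subseteq F'$, the assignment $d \mapsto d$ on generators respects the defining relations, since each relation $e+f=f$ of $M(F,\tri|_F)$ persists in $M(F',\tri|_{F'})$ (as $e,f \in F'$ and $e\tri f$); this induces compatible homomorphisms $\iota_{FF'} : M(F,\tri|_F) \to M(F',\tri|_{F'})$, so the $M(F,\tri|_F)$ form a directed inductive system of finitely generated refinement monoids. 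Likewise $d\mapsto d$ induces compatible homomorphisms $\iota_F : M(F,\tri|_F) \to M$.

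The one step requiring care is to verify that $M$, equipped with the $\iota_F$, really is the inductive limit $\varinjlim_F M(F,\tri|_F)$; this is where I expect the only (mild) obstacle to lie. The crux is the observation that every defining relation of $M = M(D,\tri)$ is \emph{finitely supported}, involving only two generators, and is therefore already a relation of $M(F,\tri|_F)$ for any finite $F$ containing those two generators. Concretely, to check the universal property, given a monoid $N$ and compatible homomorphisms $\theta_F : M(F,\tri|_F)\to N$, I would define $\theta : M \to N$ on a generator $d\in D$ by $\theta(d) := \theta_{\{d\}}(d)$ and verify that $\theta$ respects each relation $e+f=f$ by applying $\theta_{\{e,f\}}$, which respects it; uniqueness of $\theta$ and the identities $\theta\,\iota_F = \theta_F$ are then immediate on generators. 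This exhibits $M$ as an inductive limit of finitely generated refinement monoids, i.e.\ as tame. (Alternatively, one could verify criterion $(\dagger)$ of Theorem \ref{tamecriterion}: a finitely generated submonoid $M'\subseteq M$ is generated by finitely many sums of primes from $D$, hence is captured by $M(F,\tri|_F)$ for a suitable finite $F$; but the inductive-limit construction above is cleaner, as it avoids any injectivity analysis of the maps $\iota_F$.)
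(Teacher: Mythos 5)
Your proof is correct and is essentially identical to the paper's: the paper likewise realizes $M = M(D,\tri)$ as the inductive limit of the system $M_X := M(X,\tri_X)$ indexed by the nonempty finite subsets $X \subseteq D$ ordered by inclusion, with transition maps induced by the inclusions $X \rightarrow Y$, noting that each $M_X$ is primitive (hence a finitely generated refinement monoid). The only difference is that you spell out the universal-property verification of the limit, which the paper asserts as routine.
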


\begin{proof} Let $M$ be a primitive monoid, with a presentation as in Proposition \ref{primitivepresentation}. If $D$ is finite, then $M$ is finitely generated and we are done, so assume that $D$ is infinite.

Let $\calD$ be the collection of nonempty finite subsets of $D$, partially ordered by inclusion. For $X\in \calD$, let $\tri_X$ denote the restriction of $\tri$ to $X$, and set $M_X := M(X,\tri_X)$. Since $\tri_X$ is transitive and antisymmetric, $M_X$ is primitive. In particular, $M_X$ is a refinement monoid, and it is finitely generated by construction. For any $X,Y \in \calD$ with $X\subseteq Y$, the inclusion map $X\rightarrow Y$ extends uniquely to a homomorphism $\psi_{X,Y}: M_X \rightarrow M_Y$. The collection of monoids $M_X$ and transition maps $\psi_{X,Y}$ forms an inductive system, and $M$ is an inductive limit for this system. Therefore $M$ is tame.
\end{proof}

\subsection{Further consequences of tameness}  \label{tameimplies}

Any monoid $M$ has a maximal antisymmetric quotient, namely $M/{\equiv}$ where $\equiv$ is the congruence defined as follows: $x \equiv y$ if and only if $x\le y\le x$. 

\begin{theorem}  \label{tame:antisymmetrizn}
If $M$ is a tame refinement monoid, then its maximal antisymmetric quotient $M/{\equiv}$ is a conical tame refinement monoid.
\end{theorem}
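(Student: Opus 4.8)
The plan is to exhibit $\ol M := M/{\equiv}$ as an inductive limit of primitive monoids and then invoke closure of the class of tame refinement monoids under inductive limits. First observe that $\ol M$ is antisymmetric by construction, hence conical by the remark in Definitions \ref{primeandprimitive}; so the only real content is tameness (which will also yield that $\ol M$ is a refinement monoid).

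Write $M = \varinjlim_{i\in I} M_i$ for a directed system of finitely generated refinement monoids $M_i$, with transition maps $\psi_{ij}$ and limit maps $\psi^i$. Let $\equiv_i$ denote the antisymmetrization congruence on $M_i$ and set $\ol M_i := M_i/{\equiv_i}$. Since monoid homomorphisms preserve the algebraic preorder, each $\psi_{ij}$ and each $\psi^i$ carries $\equiv_i$-equivalent elements to equivalent elements, so there are induced maps $\ol\psi_{ij}: \ol M_i \to \ol M_j$ and $\ol\psi^i: \ol M_i \to \ol M$, and the $\ol M_i$ form a directed system. I would then prove $\ol M = \varinjlim \ol M_i$ in the category of commutative monoids by verifying the universal property directly. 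The one nontrivial point is that whenever $x\equiv y$ in $M$, the relation is already witnessed at a finite stage: writing $x=\psi^i(\xi)$ and $y=\psi^i(\eta)$, the inequalities $x\le y$ and $y\le x$ each descend (by directedness, after enlarging the index) to inequalities in some $M_m$, so that $\psi_{im}(\xi)\equiv_m\psi_{im}(\eta)$ and hence $x,y$ have the same image in $\ol M_m$. This forces any compatible cocone out of the $\ol M_i$ to factor uniquely through $\ol M$.

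The crux is then to show that each $\ol M_i$ is a primitive monoid, for then each $\ol M_i$ is a finitely generated refinement monoid, hence tame, and $\ol M = \varinjlim \ol M_i$ is a tame refinement monoid by Proposition \ref{classoftame}. Each $\ol M_i$ is finitely generated (being a quotient of $M_i$) and antisymmetric (by construction); since adjoining or killing units does not affect the antisymmetrization, I may replace $M_i$ by the conical finitely generated refinement monoid $M_i/U(M_i)$ and so assume $M_i$ is conical. By Brookfield's theorem \cite[Corollary 6.8]{Brook01}, $M_i$ is primely generated. A short computation with the order-reflection property of the quotient map --- namely that $[a]\le[b]$ in $\ol M_i$ holds if and only if $a\le b$ in $M_i$ --- shows that the image of a prime is prime and that these images generate, so $\ol M_i$ is primely generated as well. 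Thus $\ol M_i$ is an antisymmetric, primely generated monoid, and it only remains to verify the Riesz refinement property for $\ol M_i$.

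This last verification is the main obstacle, since refinement is exactly the extra hypothesis in the definition of a primitive monoid and, as the failure of naive absorption arguments shows, it does not follow formally from order reflection alone. The cleanest route I anticipate is to read off a presentation of $\ol M_i$ of Pierce's form: take $D$ to be the set of nonzero primes of $\ol M_i$ and define $e\tri f \iff e+f=f$. Transitivity and antisymmetry of $\tri$ follow at once from antisymmetry of $\ol M_i$, so one obtains a monoid $M(D,\tri)$, which is automatically primitive --- in particular a refinement monoid --- by Proposition \ref{primitivepresentation}. It then suffices to prove that the canonical surjection $M(D,\tri)\to\ol M_i$ is an isomorphism, i.e.\ that every relation among the prime images already follows from the absorption relations $e+f=f$; here one uses Riesz refinement in $M_i$ together with primeness to reduce arbitrary equalities of sums of primes to these generators. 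I expect this reduction --- essentially transporting Pierce's structure analysis through the quotient --- to be the technically delicate step.
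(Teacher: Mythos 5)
Your reduction is sound up to the final step, and it mirrors the paper's own argument: passing the antisymmetrization through the inductive limit is correct (the paper asserts this without proof, and your finite-stage witnessing argument is the right justification), conicality from antisymmetry is right, and your order-reflection observation --- $[a]\le[b]$ in $\ol{M}_i$ if and only if $a\le b$ in $M_i$ --- does show that images of primes are prime, so each $\ol{M}_i$ is a finitely generated, antisymmetric, primely generated monoid.

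However, there is a genuine gap exactly at the step you yourself flag as delicate: the whole content of the theorem beyond these routine manipulations is the assertion that each $\ol{M}_i$ has refinement, and this is never proved. Your proposed route --- show that the canonical surjection $M(D,\tri)\to \ol{M}_i$ is injective, where $D$ is the set of nonzero primes of $\ol{M}_i$ --- is a correct reformulation (injectivity would make $\ol{M}_i$ primitive by Proposition \ref{primitivepresentation}, hence a refinement monoid), but it is equivalent to, not easier than, the statement you need; prime generation plus antisymmetry alone cannot yield it, since refinement is an independent ingredient in the definition of a primitive monoid, so any proof of injectivity must genuinely exploit refinement in $M_i$ and track how it survives the congruence ${\equiv}_i$. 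That is a substantive piece of work: it is precisely Brookfield's theorem that the maximal antisymmetric quotient of a primely generated refinement monoid is again a refinement monoid, which is how the paper closes this step, citing \cite[Theorem 5.2 and Corollary 6.8]{Brook01} (Corollary 6.8 supplying prime generation of the finitely generated $M_i$, just as you use it). So either invoke that theorem or carry out the Pierce-style reduction of arbitrary relations among primes to the absorption relations in full; as written, the crux of the proof is missing.
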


\begin{proof} Conicality for $M/{\equiv}$ follows immediately from antisymmetry.

Assume that $M$ is an inductive limit of an inductive system of finitely generated refinement monoids $M_i$. Then $M/{\equiv}$ is an inductive limit of 
the corresponding inductive system of finitely generated monoids $M_i/{\equiv}$. The latter monoids have refinement by \cite[Theorem 5.2 and Corollary 6.8]{Brook01}. Therefore $M$ has refinement and is tame.
\end{proof}

Moreira Dos Santos constructed an example in \cite{MDS} of a conical, unperforated, strongly separative refinement monoid whose maximal antisymmetric quotient does not have refinement. This monoid is wild, by Theorem \ref{tame:antisymmetrizn}.

\begin{theorem}  \label{tame:stablyfinite}
Let $M$ be a tame refinement monoid. If $M$ is stably finite, then $M$ is cancellative.
\end{theorem}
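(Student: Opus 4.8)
The plan is to reduce the statement to the case of finitely generated refinement monoids and exploit the structure theory of primely generated refinement monoids developed by Brookfield. Since $M$ is tame, write $M = \varinjlim_{i\in I} M_i$ for an inductive system of finitely generated refinement monoids $M_i$, with limit maps $\psi^i : M_i \to M$. The key obstacle is that stable finiteness need not pass to the individual $M_i$, so I cannot simply assume each $M_i$ is cancellative and conclude by taking the limit. Instead I would argue directly in $M$ using elements lifted from a single $M_i$.

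\textbf{Main line of argument.} Suppose $x+y = x+z$ in $M$; I want to prove $y=z$. Since every element of $M$ comes from some $M_i$ and $I$ is directed, I can choose a single index $i$ and elements $\widetilde{x},\widetilde{y},\widetilde{z} \in M_i$ with $\psi^i(\widetilde{x})=x$, and similarly for $y,z$. The equation $x+y=x+z$ holds in the limit, so after increasing $i$ (using directedness and the definition of inductive limit in the category of commutative monoids) I may assume $\widetilde{x}+\widetilde{y} = \widetilde{x}+\widetilde{z}$ already holds in $M_i$. Now $M_i$ is a finitely generated refinement monoid, hence primely generated by Brookfield's theorem \cite[Corollary 6.8]{Brook01}, and I can invoke the cancellation machinery for primely generated refinement monoids from \cite{Brook97, Brook01}. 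The crucial fact I expect to use is that in a primely generated refinement monoid, failure of cancellation is governed by the presence of certain ``idempotent-like'' or properly infinite behaviour at primes: an element can fail to cancel only through a prime $p$ at which $2p \le p$ (equivalently $p + p = p + (\text{something})$ witnessing non-stable-finiteness), and stable finiteness precisely forbids this. So the plan is to show that the obstruction to cancelling $\widetilde{x}$ in $M_i$ maps, under $\psi^i$, to an obstruction to stable finiteness in $M$.

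\textbf{Transporting stable finiteness.} The technical heart is the following: although $M_i$ itself may not be stably finite, the image $\psi^i(M_i) \subseteq M$ inherits constraints from the stable finiteness of $M$. Concretely, I would analyse the primes of $M_i$ and their images. By Brookfield's results, $M_i$ decomposes according to its prime elements, and cancellation of $\widetilde{x}$ reduces to cancellation prime-by-prime; the only primes $p$ of $M_i$ that can fail to cancel are the ``regular'' or infinite primes satisfying $p \le 2p$ with $2p \ne p + q$ for appropriate $q$ — precisely the primes whose images in $M$ would violate stable finiteness. Since $M$ is stably finite, no element $w \in M$ satisfies $w + v = w$ with $v \ne 0$; pulling this back, any prime of $M_i$ whose image would create such a relation must map in a way that collapses the offending part. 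Thus in $M$ the relation $x + y = x + z$ forces $y = z$.

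\textbf{Expected main difficulty.} The hard part will be making the prime-by-prime cancellation analysis rigorous at the level of a single $M_i$ and then correctly transporting the conclusion across $\psi^i$ to $M$, because $\psi^i$ need be neither injective nor order-reflecting, and stable finiteness is exactly one of the properties (noted in the excerpt) that does \emph{not} pass to quotients or survive naively under the limit maps. I anticipate the cleanest route is to avoid a full structural dissection of $M_i$ and instead isolate the minimal combinatorial obstruction to cancellation in a primely generated refinement monoid, show it is detected by a stable-finiteness violation, and check that such a violation in $M_i$ would produce one in $M$ via $\psi^i$. An alternative, possibly shorter, route would first reduce to the antisymmetric (hence conical) case using Theorem \ref{tame:antisymmetrizn}, so that the algebraic ordering is a genuine partial order and primitive-monoid presentations (Proposition \ref{primitivepresentation}) become available, then verify cancellation directly from the defining relations $e + f = f$ for $e \tri f$, checking that stable finiteness rules out the relations that would obstruct it.
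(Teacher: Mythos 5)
Your proposal follows essentially the same route as the paper's proof: witness the equation $x+y=x+z$ at a finitely generated stage (the paper does this via Theorem \ref{tamecriterion}, you via directedness of the inductive system, which comes to the same thing), invoke Brookfield's theorem that finitely generated refinement monoids are primely generated, extract from the primely generated cancellation theory an obstruction which is a stable-finiteness violation, and push it forward to $M$ where stable finiteness forces it to vanish. The precise form of the ``crucial fact'' you anticipate is exactly \cite[Lemma 2.1]{APW} as cited in the paper: from $\widetilde{x}+\widetilde{y}=\widetilde{x}+\widetilde{z}$ in $M_i$ one obtains $p,q\in M_i$ with $\widetilde{y}+p=\widetilde{z}+q$ and $\widetilde{x}+p=\widetilde{x}+q=\widetilde{x}$, whose images in $M$ must be $0$ by stable finiteness, giving $y=z$ and dissolving the ``transport'' difficulty you flagged.
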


\begin{proof} Suppose $a,b,c\in M$ with $a+c= b+c$, and let $M'$ be the submonoid of $M$ generated by $a$, $b$, $c$. By Theorem \ref{tamecriterion}, the inclusion map $M' \rightarrow M$ can be factored as the composition of homomorphisms $f: M'\rightarrow M''$ and $g: M''\rightarrow M$ where $M''$ is a finitely generated refinement monoid. Apply \cite[Corollary 6.8]{Brook01} and \cite[Lemma 2.1]{APW} to the equation $f(a)+f(c) = f(b)+f(c)$ in $M''$. This yields elements $x,y\in M''$ such that $f(a)+x = f(b)+y$ and $f(c)+x= f(c)+y= f(c)$. Then $c+g(x)= c+g(y) =c$, and so $g(x)=g(y)=0$ by the stable finiteness of $M$. Therefore $a= a+g(x)= b+g(y) = b$, as desired.
\end{proof}

We shall construct examples of stably finite, noncancellative refinement monoids below. These will be wild by Theorem \ref{tame:stablyfinite}.

\begin{corollary} \label{tamecancellativequo}
Let $M$ be a tame refinement monoid, and set
$$J := \{a \in M \mid \text{there exists} \;\, b \in M \; \text{with} \;\, a+b\le b \}.$$
Then $J$ is an o-ideal of $M$, and $M/J$ is a cancellative tame refinement monoid.
\end{corollary}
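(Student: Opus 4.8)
The plan is to reduce the cancellativity of $M/J$ to stable finiteness and then invoke Theorem~\ref{tame:stablyfinite}. Concretely, I would proceed in four steps: (i) verify by elementary order manipulations that $J$ is an o-ideal of $M$; (ii) conclude from Proposition~\ref{prop:tame-closed-under-quotients} that $M/J$ is a tame refinement monoid; (iii) prove directly that $M/J$ is stably finite; and (iv) apply Theorem~\ref{tame:stablyfinite} to deduce that $M/J$ is cancellative. Steps (i), (ii), and (iv) are essentially bookkeeping given the results already established; the only step requiring a genuine idea is (iii).

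For step (i), note first that $0\in J$ (take $b=0$). If $a_1,a_2\in J$, witnessed by $a_1+b_1\le b_1$ and $a_2+b_2\le b_2$, then setting $b:=b_1+b_2$ and using translation-invariance of the algebraic ordering gives $a_1+b\le b$ and $a_2+b\le b$, whence $a_1+a_2+b\le a_1+b\le b$; thus $J$ is a submonoid. For the hereditary property, if $x\le y$ with $y\in J$, say $y+b\le b$, then $x+b\le y+b\le b$, so $x\in J$. (Refinement is not needed here.) Hence $J$ is an o-ideal, and by Proposition~\ref{prop:tame-closed-under-quotients} the quotient $M/J$ is a tame refinement monoid.

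For step (iii), recall that $\bar x+\bar y=\bar x$ in $M/J$ means $x+y\equiv_J x$, i.e.\ there exist $p,q\in J$ with $x+y+p=x+q$. Choose $c\in M$ witnessing $q\in J$, so that $q+c\le c$. Then
$$x+y+c\ \le\ x+y+p+c\ =\ x+q+c\ \le\ x+c,$$
where the first inequality uses $0\le p$ and the last uses $q+c\le c$. Setting $b:=x+c$, this reads $y+b\le b$, so $y\in J$ and hence $\bar y=0$. Thus $M/J$ is stably finite, and therefore cancellative by Theorem~\ref{tame:stablyfinite}.

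The main (and really the only) obstacle is recognizing the right witness in step (iii): one must absorb the surplus term $q$ into a suitable element $c$ and then add $x+c$ in order to convert the congruence-level equation $x+y+p=x+q$ into an honest absorption relation $y+(x+c)\le x+c$. Once this telescoping is in place everything else follows mechanically from the cited results, and the routine verifications of steps (i), (ii), and (iv) present no difficulty.
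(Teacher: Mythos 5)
Your proposal is correct and follows essentially the same route as the paper: $J$ is an o-ideal, $M/J$ is tame with refinement by Proposition~\ref{prop:tame-closed-under-quotients}, stable finiteness is verified by absorbing the witness of $q\in J$ (the paper's $v$) into an element $c$ and observing $y+(x+c)\le x+c$, and Theorem~\ref{tame:stablyfinite} finishes. Your inequality chain $x+y+c\le x+y+p+c=x+q+c\le x+c$ is exactly the paper's computation with different letters, so there is nothing to add.
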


\begin{proof} It is clear that $J$ is an o-ideal. The quotient $M/J$ thus exists, and it is a tame refinement monoid by Proposition \ref{prop:tame-closed-under-quotients}. By Theorem \ref{tame:stablyfinite}, it only remains to show that $M/J$ is stably finite.

Suppose $x$ and $a$ are elements of $M$ whose images $\xbar,\abar\in M/J$ satisfy $\xbar+\abar= \xbar$. Then there exist $u,v\in J$ such that $x+a+u = x+v$. By definition of $J$, there is some $b\in M$ such that $v+b\le b$. Hence, 
$$a+(x+b) \le x+a+u+b = x+v+b \le x+b.$$
Therefore $a\in J$ and $\abar=0$, as required.
\end{proof}

Among stably finite conical refinement monoids, tameness can be characterized by combining Theorem \ref{tame:stablyfinite} with criteria of Grillet \cite{Gril76} and Effros-Handelman-Shen \cite{EHS}, as follows.

\begin{theorem}  \label{tamecriterion:stablyfinite}
Let $M$ be a stably finite conical refinement monoid. Then $M$ is tame if and only if it is unperforated and cancellative.
\end{theorem}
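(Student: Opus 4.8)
The plan is to prove the two implications separately, with the forward one being essentially immediate from results already in hand. If $M$ is tame, then it is unperforated by Theorem \ref{tame:sepunperf}, and, being stably finite by hypothesis, it is cancellative by Theorem \ref{tame:stablyfinite}. So the forward direction requires no new work.

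For the converse, assume $M$ is unperforated and cancellative (note that cancellativity alone already forces stable finiteness). Since $M$ is cancellative and conical, I would realize it as the positive cone of its Grothendieck group: embed $M$ into $G := G(M)$ and let $G^+$ be the image of $M$. Conicality gives $G^+\cap(-G^+)=\{0\}$, so $(G,G^+)$ is a partially ordered abelian group with positive cone isomorphic to $M$, and it is directed since $G=M-M$. I would then check that $G$ is a dimension group. It has Riesz interpolation because $M$ has refinement and, for the positive cone of a partially ordered abelian group, refinement and interpolation are equivalent (\S\ref{background}). It is unperforated because, writing any $g\in G$ as $g=x-y$ with $x,y\in M$, the hypothesis $ng\ge 0$ becomes $ny\le nx$ in $M$, so $y\le x$ by unperforation of $M$, i.e.\ $g\ge 0$.

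Having identified $G$ as a dimension group, I would invoke the Effros--Handelman--Shen theorem \cite{EHS}, in the form extended to arbitrary index sets by Grillet \cite{Gril76}: every dimension group is an inductive limit $G=\varinjlim_{i\in I}G_i$ of simplicial groups $G_i\cong\Z^{n_i}$, with order-preserving transition maps. The concluding step is to pass to positive cones. Each $G_i^+\cong(\Zplus)^{n_i}$ is a finitely generated free commutative monoid, hence a refinement monoid, and the positive transition maps make the $G_i^+$ into an inductive system. I would then argue that $M\cong G^+=\varinjlim_{i\in I}G_i^+$ as monoids, which exhibits $M$ as a tame refinement monoid.

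The step requiring the most care is this last identification of $G^+$ with the monoid inductive limit of the $G_i^+$. What must be verified is (i) that every element of $G^+$ is the image of some element of a $G_i^+$, so that $G^+=\bigcup_{i}\phi^i(G_i^+)$ for the canonical maps $\phi^i:G_i\to G$, and (ii) that if two elements of $G_i^+$ have equal image in $G$, then they already agree after some further transition map --- which holds because $G$ is the inductive limit of the $G_i$ as groups. Together these yield the monoid inductive limit description and finish the proof. (One could instead apply Grillet's monoid-theoretic characterization of directed colimits of finitely generated free commutative monoids directly to $M$, avoiding the passage through $G$; the hypotheses to be checked are essentially the same.)
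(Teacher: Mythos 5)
Your proposal is correct and follows essentially the same route as the paper, which gives two proofs of the converse and whose second one is exactly your argument: realize $M$ as the positive cone of a directed partially ordered abelian group, check it is a dimension group, and apply Effros--Handelman--Shen \cite[Theorem 2.2]{EHS} before passing back to positive cones (a step the paper compresses into one sentence and you rightly flag as needing the surjectivity/kernel verification). Your closing parenthetical --- applying Grillet's monoid-theoretic theorem directly to $M$ --- is precisely the paper's first proof, via the strong RIP and \cite[Theorem 2.1]{Gril76}.
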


\begin{proof} Necessity is given by Theorems \ref{tame:sepunperf} and \ref{tame:stablyfinite}. Conversely, assume that $M$ is unperforated and cancellative. Since $M$ is also conical, its algebraic order is antisymmetric. We now apply one of two major theorems.

The first approach stays within commutative monoids. From the above properties of $M$, we see that the following conditions hold for any $a,c,d\in M$ and $n\in\N$: (1) If $na=nc$, then $a=c$, and (2) if $na= nc+d$, then the element $d$ is divisible by $n$ in $M$. By \cite[Proposition 2.7]{Gril76}, $M$ satisfies what is there called the \emph{strong RIP}: Whenever $a,b,c,d\in M$ and $n\in \N$ with $na+b= nc+d$, there exist $u,v,w,z\in M$ such that 
$$a=u+v,\qquad b=nw+z,\qquad c=u+w,\qquad d=nv+z.$$
The main theorem of \cite{Gril76}, Theorem 2.1, now implies that $M$ is an inductive limit of an inductive system of free commutative monoids, i.e., of direct sums of copies of $\Zplus$. Clearly free commutative monoids are tame, and therefore $M$ is tame by Proposition \ref{classoftame}.

For the second approach, observe that $M$ is the positive cone of a directed, partially ordered abelian group $G$ (because $M$ is conical and cancellative). The assumptions on $M$ imply that $G$ is an unperforated interpolation group (with respect to its given partial order), and thus $G$ is a dimension group. The theorem of Effros-Handelman-Shen \cite[Theorem 2.2]{EHS} now implies that $G$ is an inductive limit of an inductive system of partially ordered abelian groups $\Z^{n_i}$. Consequently, $M$ is an inductive limit of an inductive system of monoids $(\Zplus)^{n_i}$, and therefore $M$ is tame.
\end{proof}

Tame refinement monoids satisfy a number of other properties, of which we mention a few samples.

\begin{remark}  \label{tame:further}
Let $M$ be a tame refinement monoid and $a,b,c,d_1,d_2 \in M$. Then:
\begin{enumerate}
\item If $a+c \le b+c$, there exists $a_1\in M$ such that $a_1+c=c$ and $a\le b+a_1$.
\item If $a\le c+d_i$ for $i=1,2$, there exists $d\in M$ such that $a\le c+d$ and $d\le d_i$ for $i=1,2$.
\end{enumerate}
Both (1) and (2) hold in finitely generated refinement monoids, by \cite[Corollaries 4.2, 5.17, 6.8]{Brook01}, and they pass to inductive limits.
\end{remark}

We close this section with a result due to the referee. We thank him/her for allowing us to include it here.

\begin{proposition}
 \label{prop:Riesz-prop}
 Let $M$ be a tame refinement monoid, and let $\sim $ be the congruence on $M$ given by $x\sim y $ if and only if there exists $z \in M$ such that $x+z=y+z$. 
 Then $M/{\sim} $ {\rm(}the maximal cancellative quotient of $M$\/{\rm)} is a Riesz monoid. 
\end{proposition}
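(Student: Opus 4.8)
The plan is to realize $M/{\sim}$ as the positive cone of a directed abelian group and to verify the Riesz decomposition property there directly, the one genuinely nonformal ingredient being the cancellation-type property recorded in Remark \ref{tame:further}(1). First I would observe that $\sim$ is exactly the kernel congruence of the universal map $\gamma\colon M\to G$ into the Grothendieck group $G:=G(M)$, since $\gamma(x)=\gamma(y)$ iff $x+z=y+z$ for some $z$. Hence $M/{\sim}$ is (isomorphic to) the image $G^+:=\gamma(M)$, a cancellative submonoid of $G$ which generates $G$ as a group, so that $(G,G^+)$ is a directed preordered abelian group and the algebraic order on $M/{\sim}$ is just $g\le h\Leftrightarrow h-g\in G^+$. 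Being a Riesz monoid amounts to the Riesz decomposition property for $G^+$ (for the positive cone of a directed abelian group, decomposition, interpolation, and refinement coincide; cf. \cite[Proposition 2.1]{poagi}), so it suffices to prove: whenever $z,x_1,x_2\in M$ with $\gamma(z)\le\gamma(x_1)+\gamma(x_2)$, there are $z_1,z_2\in M$ with $\gamma(z)=\gamma(z_1)+\gamma(z_2)$ and $\gamma(z_i)\le\gamma(x_i)$. The general $n$-fold decomposition then follows by the identical argument (or by induction on $n$).

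Unwinding the hypothesis, $\gamma(z)\le\gamma(x_1)+\gamma(x_2)$ gives $w\in M$ with $\gamma(z)+\gamma(w)=\gamma(x_1)+\gamma(x_2)$, that is $z+w\sim x_1+x_2$, so there is $u\in M$ with
$$(z+w)+u=(x_1+x_2)+u.$$
Here lies the main obstacle: one cannot cancel the witness $u$ inside a general refinement monoid, and a plain refinement of this equation leaves $u$ entangled with the refinement entries (the ``extra'' contribution of $u$ fails to sit inside a single designated entry). This is exactly what tameness repairs. Applying Remark \ref{tame:further}(1) to the inequality $(z+w)+u\le(x_1+x_2)+u$ produces $a_1\in M$ with $a_1+u=u$ and $z+w\le x_1+x_2+a_1$. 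The relation $a_1+u=u$ forces $\gamma(a_1)=0$, so $a_1$ is a genuinely $M$-sized but $\gamma$-invisible slack element which will absorb all the ambiguity created by $u$.

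Now $z\le z+w\le x_1+x_2+a_1$ in $M$, so Riesz decomposition in the refinement monoid $M$ yields $t_1,t_2,t_3\in M$ with $z=t_1+t_2+t_3$ and $t_1\le x_1$, $t_2\le x_2$, $t_3\le a_1$. Writing $\bar t_i:=\gamma(t_i)$, the last inequality gives $\bar t_3\le\gamma(a_1)=0$, while $\bar t_3\in G^+$ gives $\bar t_3\ge 0$. Setting $z_1:=t_1+t_3$ and $z_2:=t_2$, we get $z=z_1+z_2$, hence $\gamma(z)=\gamma(z_1)+\gamma(z_2)$, with $\gamma(z_2)=\bar t_2\le\gamma(x_2)$ and $\gamma(z_1)=\bar t_1+\bar t_3\le\bar t_1\le\gamma(x_1)$, using $\bar t_3\le 0$. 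This establishes the Riesz decomposition property for $M/{\sim}$, and therefore that $M/{\sim}$ is a Riesz monoid. Note that refinement and decomposition in $M$ and the formal properties of the group completion are all that is used apart from Remark \ref{tame:further}(1); the tameness hypothesis enters only through that remark, which is the crux of the argument.
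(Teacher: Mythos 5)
Your proof is correct and takes essentially the same route as the paper's: both unwind $[a]\le[b_1]+[b_2]$ to an inequality $a+c\le b_1+b_2+c$ in $M$, invoke Remark \ref{tame:further}(1) to trade the witness $c$ for a slack element that is trivial modulo $\sim$, apply Riesz decomposition in $M$, and then absorb the slack term into one of the two components. The only difference is presentational: you package $M/{\sim}$ as the positive cone of the Grothendieck group and check the inequalities there, whereas the paper verifies the same inequalities directly with congruence classes (e.g., $a_1+e+c\le b_1+c$ gives $[a_1+e]\le[b_1]$).
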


\begin{proof}
 Denote by $[x]$ the $\sim$-equivalence class of $x\in M$.
 
 Suppose that $[a]\le [b_1]+[b_2]$ in $M/{\sim} $. There is $c\in M$ such that $a+c\le b_1+b_2+c$, and so, by Remark \ref{tame:further}(1), there is $d\in M$ such that 
$d+c= c$ and  $a\le b_1+b_2+d$. By Riesz decomposition in $M$, we have $a=a_1+a_2+e$, where $a_i\le b_i$, $i=1,2$, and $e\le d$. Now, $a_1\le b_1$ and $e+c\le c$, whence $a_1+e+c\le b_1+c$, and thus $[a_1+e]\le [b_1]$. 
Since $[a_2]\le [b_2]$ and $[a]=[a_1+e]+[a_2]$, we have thus verified that $M/{\sim} $ has Riesz decomposition.
\end{proof}

We believe that the first example of a regular ring $R$ such that $V(R)$ is a wild refinement monoid is Bergman's example \cite[Example 5.10]{vnrr}.
This ring is stably finite but not unit-regular, so $V(R)$ is wild by Theorem \ref{tame:stablyfinite}. 
As noted in \cite[Section 3]{AGreal}, the regular rings constructed by Bergman in \cite[Example 5.10]{vnrr} and by Menal and Moncasi in \cite[Example 2]{MM} realize 
the monoid $\Mbar$ constructed in Section \ref{MMbar}, so that $\Mbar$ could be considered as the most elementary example of a wild refinement monoid.
Moncasi constructed in \cite{moncasi} an example of a regular ring $R$ such that $K_0(R)$ is not a Riesz group. Hence, $K_0(R)^+$ is not a Riesz monoid. But $K_0(R)^+ \cong V(R)/{\sim}$. Thus, by 
Proposition \ref{prop:Riesz-prop}, $V(R)$ is a wild refinement monoid.

\section{Graph monoids}
\label{sec:graphmon}

We express directed graphs in the form $E := (E^0, E^1, r, s)$ where $E^0$ and $E^1$ are the sets of vertices and arrows of $E$, respectively, while $r = r_E$ and $s = s_E$ are the respective range and source maps $E^1 \rightarrow E^0$. The graph $E$ is said to be \emph{row-finite} if its incidence matrix is row-finite, i.e., for each vertex $v \in E^0$, there are at most finitely many arrows in $E^1$ with source $v$. There is a natural \emph{category of directed graphs}, call it $\calD$, whose objects are all directed graphs and in which a morphism from an object $E$ to an object $F$ is any pair of maps $(g_0,g_1)$ where $g_i: E^i \rightarrow F^i$ for $i=1,2$ while $r_F g_1 = g_0 r_E$ and $s_F g_1 = g_0 s_E$. Any inductive system in $\calD$ has an inductive limit in $\calD$.

\subsection{Monoids associated to (unseparated) directed graphs}  \label{M(E)}

A \emph{graph monoid} $M(E)$ associated to a directed graph $E$ was first introduced in \cite[p.~163]{AMP} in the case that $E$ is row-finite. In that setting, $M(E)$ is defined to be the commutative monoid presented by the set of generators $E^0$ and the relations
\begin{enumerate}
\item $v = \sum \{ r(e) \mid  e \in s^{-1}(v) \}$ for all non-sinks $v \in E^0$.
\end{enumerate}
A definition for $M(E)$ in the general case was given in \cite[p.~196]{AG}. Then, the generators $v\in E^0$ are supplemented by generators $q_Z$ as $Z$ runs through all nonempty finite subsets of $s^{-1}(v)$ for \emph{infinite emitters} $v$, that is, vertices $v\in E^0$ such that $s^{-1}(v)$ is infinite. The relations consist of
\begin{enumerate}
\item $v = \sum \{ r(e) \mid  e \in s^{-1}(v) \}$ for all $v \in E^0$ such that $s^{-1}(v)$ is nonempty and finite.
\item $v = \sum \{r(e)\mid e\in Z\} +q_Z$ for all infinite emitters $v \in E^0$ and all nonempty finite subsets $Z\subset s^{-1}(v)$.
\item $q_Z = \sum \{r(e)\mid e\in W \setminus Z\} +q_W$ for all nonempty finite sets $Z \subseteq W \subset s^{-1}(v)$, where $v\in E^0$ is an infinite emitter.
\end{enumerate}

We give two proofs that the graph monoids $M(E)$ are tame refinement monoids. The first (Theorem \ref{MEgeneral}) involves changing the graph $E$ to a new graph $\Etil$ whose monoid is 
isomorphic to $M(E)$, while the second (see \S\ref{sepgraphmonoids}) takes advantage of inductive limit results for monoids associated to separated graphs (Definition \ref{defsepgraph}).
Observe that both proofs give that $M(E)$ is in fact an inductive limit of {\it graph monoids} $M(F)$ associated to {\it finite graphs} $F$, showing thus an {\it a priori} stronger statement.   

\begin{theorem}  \label{MEgeneral}
For any directed graph $E$, the graph monoid $M(E)$ is a tame, conical, refinement monoid. In particular, it is unperforated and separative.
\end{theorem}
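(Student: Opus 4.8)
The plan is to treat tameness as the real content and to derive the rest from it. Refinement is already available, from \cite[Proposition 4.4]{AMP} and \cite[Corollary 5.16]{AG}, and once tameness is proved the final sentence is immediate: by Theorem \ref{tame:sepunperf} every tame refinement monoid is unperforated and separative. I would not argue conicality separately—it will drop out of the inductive limit description below, since the finite graph monoids are conical (indeed all graph monoids are, by \cite{AMP, AG}) and conicality passes to directed inductive limits (if $x+y=0$ in such a limit, the relation is already witnessed at some stage, where conicality forces both summands to vanish). Thus the whole theorem reduces to exhibiting $M(E)$ as an inductive limit of finitely generated refinement monoids, after which Proposition \ref{classoftame} finishes the job.

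The heart of the argument is to realize $M(E)$ as an inductive limit of the graph monoids $M(F)$ of the \emph{finite} subgraphs $F$. For a graph in which every vertex emits only finitely many edges this is routine: one uses the directed system of finite \emph{complete} subgraphs $F$ (those in which every non-sink vertex of $F$ has all of its $E$-edges, together with their ranges, inside $F$), treating the omitted vertices as sinks of $F$. Such subgraphs are cofinal among finite vertex sets, the graph inclusions induce monoid homomorphisms $M(F)\to M(F')$ and $M(F)\to M(E)$, every defining relation of $M(E)$ is witnessed in some $F$, and each $M(F)$ is finitely generated with refinement; hence $M(E)=\varinjlim_F M(F)$, and $M(E)$ is tame.

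The one obstruction, and the step I expect to be hardest, is the presence of \emph{infinite emitters}: a vertex $v$ with $s^{-1}(v)$ infinite can never be a complete non-sink vertex of a finite subgraph, so in the naive finite-subgraph system it is forced to act as a sink and its relations (2),(3) and the generators $q_Z$ are lost. I would remove this obstruction by replacing $E$ with a graph $\Etil$ that has no infinite emitters and satisfies $M(\Etil)\cong M(E)$; applying the finite-subgraph limit of the previous paragraph to $\Etil$ then gives $M(E)\cong M(\Etil)=\varinjlim_{\Ftil} M(\Ftil)$ over the finite subgraphs $\Ftil$ of $\Etil$, proving the stronger assertion that $M(E)$ is an inductive limit of graph monoids of finite graphs. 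The construction of $\Etil$ attaches to each infinite emitter $v$ a \emph{tail} of new vertices: after choosing a well-ordering $e_1,e_2,\dots$ of $s^{-1}(v)$, one adds vertices $w_1,w_2,\dots$ and reroutes the emissions so that $v$ emits only to $r(e_1)$ and $w_1$, while each $w_k$ emits only to $r(e_{k+1})$ and $w_{k+1}$. Every vertex of $\Etil$ then emits finitely many edges, the relations $v=r(e_1)+w_1$ and $w_k=r(e_{k+1})+w_{k+1}$ are exactly the one-edge-at-a-time instances of (2) and (3), and a direct computation shows the remaining generators $q_Z$ of $M(E)$ become redundant: with $m$ the largest index occurring in $Z$, relation (3) gives $q_Z=\sum_{i\le m,\, e_i\notin Z} r(e_i)+w_m$. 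The technical crux is to verify carefully that these two presentations define isomorphic monoids, and to treat infinite emitters of arbitrary (possibly uncountable) out-degree by a transfinite version of the tail; once this is settled, tameness, and with it the entire statement, follows.
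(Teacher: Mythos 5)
Most of your outline matches the paper: refinement and conicality from \cite{AMP, AG}, the last sentence from Theorem \ref{tame:sepunperf}, the finite-complete-subgraph limit in the row-finite case (this is exactly \cite[Lemma 3.4]{AMP}, which the paper cites), and the tail graph $\Etil$ to eliminate infinite emitters. The genuine gap is the step you defer to the end: handling emitters of uncountable out-degree ``by a transfinite version of the tail.'' That step cannot work in the form you describe. The tail trick succeeds precisely because the initial segments $\{e_1,\dots,e_m\}$ form a chain of \emph{finite} sets that is cofinal in the directed set of all finite subsets of $s^{-1}(v)$; this is what makes every $q_Z$ redundant via your formula $q_Z=\sum_{i\le m,\,e_i\notin Z}r(e_i)+w_m$. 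If $s^{-1}(v)$ is uncountable, a well-ordering has order type beyond $\omega$, and for $Z$ containing an edge $e_\alpha$ with $\alpha\ge\omega$ the ``largest index occurring in $Z$'' is an infinite ordinal: your sum becomes infinite, hence meaningless. Structurally, the tail vertices $w_\alpha$ stand in for the $q$'s of initial segments, and past $\omega$ those segments are infinite sets, which correspond to no generator of $M(E)$ at all. Indeed no chain of finite subsets of an uncountable set is cofinal in its finite subsets (a chain of finite sets is countable, so its union cannot contain every singleton), so no tail, transfinite or otherwise, can absorb all the $q_Z$. One can check concretely that the comparison fails: in $M(E)$ one has $v=r(e_\omega)+q_{\{e_\omega\}}$, whereas in the transfinite-tail graph monoid a suitable $\Zplus$-valued homomorphism (sending $r(e_\omega)\mapsto 1$ and $v\mapsto 0$) shows $v$ is not of the form $r(e_\omega)+t$ for any $t$; the limit vertices $w_\lambda$ are simply disconnected from $v$.

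The paper closes this gap not by strengthening the tail but by a prior reduction to countable out-degree. Let $\calA$ be the collection of edge sets $A\subseteq E^1$ containing all of $s^{-1}(v)$ for each finite emitter $v$ and meeting $s^{-1}(v)$ in a \emph{countably infinite} set for each infinite emitter $v$. The subgraphs $E_A=(E^0,A,r|_A,s|_A)$ form a directed system (closed under finite unions) whose inductive limit in the graph category is $E$; crucially, infinite emitters of $E$ remain infinite emitters in each $E_A$, so the presentations are compatible, the functor $M(-)$ preserves this particular kind of inductive limit, and $M(E)=\varinjlim_A M(E_A)$. By Proposition \ref{classoftame}, tameness then reduces to the row-countable case, where your tail construction is exactly the paper's $\Etil$ and goes through verbatim. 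So your proof becomes correct once the transfinite tail is replaced by some such reduction; as written, the uncountable case is not merely a technicality left to check but a step that fails.
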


\begin{proof} It is known that $M(E)$ is always a conical refinement monoid. In the row-finite case, conicality is easily seen from the definition of $M(E)$, or one can obtain it from \cite[Theorem 3.5]{AMP}, and refinement was proved in \cite[Proposition 4.4]{AMP}. These two properties of $M(E)$ were proved in general in \cite[Corollary 5.16]{AG}. 

Tameness in the row-finite case follows from \cite[Lemma 3.4]{AMP}, in which it was proved that $M(E)$ is an inductive limit of graph monoids $M(E_i)$, for certain finite subgraphs $E_i$ of $E$. Since each $M(E_i)$ is a finitely generated refinement monoid, tameness follows. We will show that tameness in general can be reduced to the ``row-countable" case, and that the latter case follows from the row-finite case.

Let $\calA$ denote the collection of those subsets $A\subseteq E^1$ such that
\begin{enumerate}
\item $s^{-1}(v)\subseteq A$ for all $v\in E^0$ such that $s^{-1}(v)$ is finite.
\item $s^{-1}(v)\cap A$ is countably infinite for all infinite emitters $v\in E^0$.
\end{enumerate}
For each $A\in \calA$, let $E_A$ denote the subgraph $(E^0,A,r|_A,s|_A)$ of $E$. Since $\calA$ is closed under finite unions, the graphs $E_A$ together with the inclusion maps $E_A \rightarrow E_B$ for $A\subseteq B$ in $\calA$ form an inductive system, and $E$ is an inductive limit of this system in the category $\calD$. While the functor $M(-)$ does not preserve all inductive limits, it does preserve ones of the form just described, as is easily verified. Consequently, to prove that $M(E)$ is tame, it suffices to prove that each $M(E_A)$ is tame, by Proposition \ref{classoftame}.

It remains to deal with the case in which $s^{-1}(v)$ is countable for all $v \in E^0$. Let $E^0_\infty$ denote the set of all infinite emitters in $E^0$, and for each $v \in E^0_\infty$, list the arrows emitted by $v$ as a sequence without repetitions, say
$$s^{-1}(v) = \{ e_{v,1}, e_{v,2}, \dots\}.$$
Then set $q_{v,n} := q_{\{e_{v,1}, \dots, e_{v,n}\}}$ for $v \in E^0_\infty$ and $n\in \N$, and observe that $M(E)$ has a presentation with generating set
$$E^0 \sqcup \{ q_{v,n} \mid v \in E^0_\infty, \; n \in \N\}$$
and relations
\begin{align*}
v &= \sum \{ r(e) \mid  e \in s^{-1}(v) \} \,, &&\quad (v\in E^0,\; 0< |s^{-1}(v)|< \infty)  \\
v &= r(e_{v,1}) +q_{v,1} \,, &&\quad (v \in E^0_\infty)  \\
q_{v,n} &= r(e_{v,n+1})+ q_{v,n+1} \,, &&\quad  (v \in E^0_\infty, \; n \in \N).
\end{align*}

Define a new directed graph $\Etil$ with vertex set 
$$\Etil^0 := E^0 \sqcup \{ w_{v,n} \mid v \in E^0_\infty, \; n \in \N \}$$
and $\Etil^1$ consisting of the following arrows:
\begin{enumerate}
\item $e \in E^1$, for those $e$ such that $s(e) \notin E^0_\infty$;
\item $e_{v,1}$ and an arrow $v \rightarrow w_{v,1}$, for each $v \in E^0_\infty$;
\item Arrows $w_{v,n} \rightarrow w_{v,n+1}$ and $w_{v,n} \rightarrow r(e_{v,n+1})$,  for each $v \in E^0_\infty$ and $n\in \N$.
\end{enumerate}
This graph is row-finite, and so $M(\Etil)$ is tame. A comparison of presentations shows that there is an isomorphism $M(E) \rightarrow M(\Etil)$, sending $v\mapsto v$ for all $v\in E^0$ and $q_{v,n} \mapsto w_{v,n}$  for all $v \in E^0_\infty$ and $n\in \N$. Therefore $M(E)$ is tame, as desired.
\end{proof}

Tame conical refinement monoids, even finitely generated antisymmetric ones, do not all arise as graph monoids $M(E)$, as shown in \cite[Lemma 4.1]{APW}.

\subsection{Separated graphs and their monoids}  \label{sepgraphmonoids}

\begin{definition}  \label{defsepgraph}
A \emph{separated graph}, as defined in \cite[Definition 2.1]{AG},  is a pair $(E,C)$ where $E$ is a directed graph,  $C=\bigsqcup
_{v\in E^0} C_v$, and
$C_v$ is a partition of $s^{-1}(v)$ (into pairwise disjoint nonempty
subsets) for every vertex $v$. (In case $v$ is a sink, we take $C_v$
to be the empty family of subsets of $s^{-1}(v)$.) The pair $(E,C)$ is called \emph{finitely separated} provided all the members of $C$ are finite sets.

We first give the definition of the \emph{graph monoid} $M(E,C)$ associated to a finitely separated graph $(E,C)$. This is the commutative monoid presented by the set of generators $E^0$ and the relations
$$v = \sum_{e \in X} r(e) \qquad \text{for all} \; v \in E^0 \; \text{and} \; X \in C_v \,.$$
\end{definition}

Lemma 4.2 of \cite{AG} shows that $M(E,C)$ is conical, and that it is nonzero as long as $E^0$ is nonempty. 
Otherwise, $M(E,C)$ has no special properties, in contrast to Theorem \ref{MEgeneral}:

\begin{theorem}  \label{MECarb}
{\rm\cite[Proposition 4.4]{AG}} Any conical commutative monoid is isomorphic to $M(E,C)$ for a suitable finitely separated graph $(E,C)$.
\end{theorem}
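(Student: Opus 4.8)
The plan is to realize $M$ as a graph monoid by encoding a presentation of $M$ directly into the blocks of a separated graph. Every defining relation of an $M(E,C)$ has the restricted shape ``one vertex equals the sum of the ranges of one finite block of arrows it emits'', so the first task is to find a presentation of $M$ whose relations can be matched to this shape. The essential constraint is that partition blocks must be nonempty, so no relation may have an empty side; this is exactly where conicality will be used.

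First I would fix the \emph{multiplication-table presentation} of $M$: take the generating set $X := M\setminus\{0\}$, identify $0$ with the empty word, and impose the relations $a+b = [a+b]$ for all $a,b\in M\setminus\{0\}$, where $[a+b]$ denotes the generator corresponding to the element $a+b$. Conicality of $M$ is precisely what guarantees $a+b\neq 0$, so $[a+b]$ is a legitimate generator and each relation equates a length-two word with a length-one word, both nonempty. A short reduction shows this presents $M$: every nonempty word collapses via the relations to a single generator $[s]$ with $s$ its sum in $M$, and $[s]\mapsto s$ is a well-defined bijection onto $M\setminus\{0\}$.

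Next I would build the graph. Declare each element of $X$ a sink, and for each relation $a+b=c$ (with $c=[a+b]$) adjoin a new vertex $z_{a,b}$ emitting three arrows with ranges $a$, $b$, $c$, and set $C_{z_{a,b}} := \{\{e_a,e_b\},\{e_c\}\}$, a partition of $s^{-1}(z_{a,b})$ into two nonempty finite blocks. Then $(E,C)$ is finitely separated. By construction the sinks contribute no relations, while $z_{a,b}$ contributes exactly $z_{a,b}=a+b$ and $z_{a,b}=c$. Eliminating the auxiliary generators $z_{a,b}$ (each defined by a relation expressing it through the old generators) leaves precisely the relations $a+b=c$, whence $M(E,C)\cong M$. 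I would make this rigorous by writing down mutually inverse homomorphisms between $M(E,C)$ and $M$ from the universal properties of the two presentations, and would treat $M=0$ separately by taking $E$ empty.

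The main obstacle, and the genuine role of the hypothesis, is the nonemptiness of blocks. Since a block is never empty, no $M(E,C)$ admits a relation of the form ``generator $=0$'', and indeed every $M(E,C)$ is conical; thus the construction can succeed only for conical $M$. Conversely, conicality is exactly what permits the multiplication-table presentation, in which every relation has two nonempty sides, so the obstruction disappears precisely under the stated hypothesis. Verifying the separated-graph axioms for the constructed $(E,C)$ is then routine.
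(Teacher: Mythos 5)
The paper itself gives no proof of this statement---it quotes the result from \cite[Proposition 4.4]{AG}---but your argument is correct and is essentially the construction used in that reference: conicality furnishes a presentation of $M$ in which every defining relation has two nonempty sides, each such relation is encoded by an auxiliary vertex whose emitted arrows are partitioned into one block per side of the relation, and the auxiliary generators are then eliminated by Tietze moves, leaving exactly the original presentation. Your only deviation is the specific choice of the multiplication-table presentation on $M\setminus\{0\}$ (relations $a+b=[a+b]$), which makes the nonemptiness of both sides of every relation immediate; this is a harmless specialization of the same method, not a different route.
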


\begin{definition}  \label{defMEC}
The general definition of graph monoids in \cite[Definition 4.1]{AG} applies to triples $(E,C,S)$, where $(E,C)$ is a separated graph and $S$ is any subset of the collection
$$\Cfin := \{ X\in C \mid X \;\, \text{is finite} \}.$$
These triples are the objects of a category $\SSGr$ \cite[Definition 3.1]{AG} in which the morphisms $\phi: (E,C,S) \rightarrow (F,D,T)$ are those graph morphisms $\phi= (\phi^0,\phi^1): E\rightarrow F$ such that
\begin{enumerate}
\item[$\bullet$] $\phi^0$ is injective.
\item[$\bullet$] For each $X\in C$, the restriction of $\phi^1$ to $X$ is an injection of $X$ into some member of $D$.
\item[$\bullet$] For each $X\in S$, the restriction $\phi^1|_X$ is a bijection of $X$ onto a member of $T$.
\end{enumerate}

For any object $(E,C,S)$ of $\SSGr$, the \emph{graph monoid} $M(E,C,S)$ is presented by the generating set
$$E^0 \sqcup \{ q'_Z \mid Z \subseteq X \in C, \;\, 0 < |Z|< \infty\}$$
and the relations
\begin{enumerate}
\item[$\bullet$] $v= q'_Z + \sum_{e\in Z} r(e)$ for all $v\in E^0$ and nonempty finite subsets $Z$ of members of $C_v$.
\item[$\bullet$] $q'_{Z_1} = q'_{Z_2}+ \sum_{e\in Z_2\setminus Z_1} r(e)$ for all nonempty finite subsets $Z_1\subseteq Z_2$ of members of $C$.
\item[$\bullet$] $q'_X=0$ for all $X\in S$.
\end{enumerate}
In case $S = \Cfin$, we set $M(E,C) := M(E,C,\Cfin)$. When $(E,C)$ is finitely separated, the generators $q'_Z$ are redundant, and $M(E,C)$ has the presentation described in Definition \ref{defsepgraph}.

Finally, we define $M(E) := M(E,C)$ where $C$ is the ``unseparation" of $E$, namely, the collection of all sets $s^{-1}(v)$ for non-sinks $v\in E^0$. 
\end{definition}

We note that whenever $(E,C)$ is a separated graph and there is a (directed) path from a vertex $v$ to a vertex $w$ in $E$, we have $w \le v$ with respect to the algebraic ordering in any $M(E,C,S)$. It is enough to verify this statement when there is an arrow $e : v \rightarrow w$. In that case, $w\le v$ follows from the relation $v= q'_{\{e\}}+ w$.
\medskip

\noindent\emph{Alternative proof of Theorem {\rm\ref{MEgeneral}}}. As before, $M(E)$ is a conical refinement monoid by the results of \cite{AMP, AG}. 

As above, write $M(E)= M(E,C,S)$ where $C := \{ s^{-1}(v) \mid v\in E^0, \;\, |s^{-1}(v)| > 0\}$ and $S := \Cfin$. By \cite[Proposition 3.5]{AG}, $(E,C,S)$ is an inductive limit in $\SSGr$ of its finite complete subobjects $(F,D,T)$, where finiteness means that $F^0$ and $F^1$ are finite sets while completeness in this case just means that
\begin{enumerate}
\item[$\bullet$] $D = \{ s_F^{-1}(v) \mid v\in F^0, \;\, |s_F^{-1}(v)| > 0\}$, and
\item[$\bullet$] $T = \{ s_E^{-1}(v) \mid v\in F^0, \;\, 0< |s_E^{-1}(v)| <\infty\}$.
\end{enumerate} 
As noted in \cite[p.~186]{AG}, the functor $M(-)$ from $\SSGr$ to commutative monoids preserves inductive limits, so $M(E)$ is an inductive limit of the monoids $M(F,D,T)$. These monoids are finitely generated because the graphs $F$ are finite, so we just need them to have refinement.

Each $(F,D)$ is a finitely separated graph, but $M(F,D,T)$ may not be equal to $M(F)$, because $T$ is in general a proper subset of $D_{\text{fin}}$. We apply \cite[Construction 5.3]{AG} to $(F,D,T)$, to obtain a finitely separated graph $(\Ftil,\Dtil)$ such that $M(F,D,T) \cong M(\Ftil,\Dtil)$. It is clear from the construction that $\Dtil_v = s^{-1}_{\Ftil}(v)$ for all non-sinks $v\in \Dtil^0$, and so $M(\Ftil,\Dtil)= M(\Ftil)$. Since $M(\Ftil)$ is a refinement monoid \cite[Proposition 4.4]{AMP}, so is $M(F,D,T)$, as required.
\qed

\section{Two wild examples}  \label{MMbar}

We present explicit constructions of two refinement monoids which are wild, but otherwise possess most of the properties of tame refinement monoids established above. In particular, they are stably finite, separative, unperforated, and conical, and one of them is archimedean.
Both are constructed as graph monoids of separated graphs.

\subsection{Three separated graphs}  \label{3sepgraphs}

We concentrate on three particular separated graphs, denoted $(E_0,C^0)$, $(E,C)$, and $(\Ebar,\Cbar)$, which are drawn in Figures \ref{E0C0} and \ref{EC} below. Both of the latter two graphs contain $E_0$ as a subgraph. We indicate the sets in the families $C^0$, $C$, and $\Cbar$  by connecting their members with dotted lines. 
Thus, $C^0_u= \bigl\{ \{e_1,e_2\}, \{f_1,f_2\} \bigr\}$, while $C^0_{x_0}$, $C^0_{y_0}$, $C^0_{z_0}$ are empty.
\ignore{
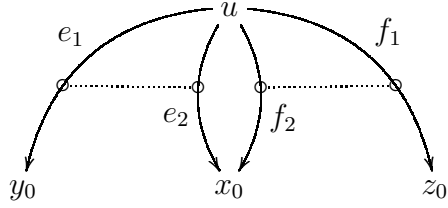
\begin{figure}[htp]
$$
\xymatrixrowsep{4.5pc}\xymatrixcolsep{5pc}\def\labelstyle{\displaystyle}
\xymatrix{
 &u  \ar@/_4ex/[dl]_{e_1}  \ar@/_4ex/[dl]|(0.65){\circ}="e1"  \ar@/_1pc/[d]_(0.6){e_2}  \ar@/_1pc/[d]|(0.45){\circ}="e2"  \ar@/^1pc/[d]^(0.6){f_2}  \ar@/^1pc/[d]|(0.45){\circ}="f2"  \ar@/^4ex/[dr]^{f_1}  \ar@/^4ex/[dr]|(0.65){\circ}="f1"    \dotedge"e1";"e2"  \dotedge"f1";"f2"  \\
y_0 &x_0 &z_0
}$$
\caption{The separated graph $(E_0,C^0)$}   \label{E0C0}
\end{figure}
}

As we shall see below, the graph monoid of $(E_0,C^0)$ does not have refinement (Remark \ref{M0notref}). In \cite[Construction 8.8]{AG}, a process of \emph{complete resolutions} was developed, by which any finitely separated graph can be enlarged to one whose graph monoid has refinement \cite[Theorem 8.9]{AG}. One application of this process leads to $(E,C)$ (we leave the details to the reader). Since $|C_v|\le 2$ for all $v\in E^0$, $(E,C)$ is also a \emph{complete multiresolution} of $(E_0,C^0)$ in the sense of \cite[Section 3]{AE}.  Finally, $(\Ebar,\Cbar)$ is obtained by removing the vertices $a_1,a_2,\dots$ from $E^0$ and shrinking the sets in $C$ as indicated in the diagram.
\ignore{
\begin{figure}[htp]
$$\xymatrixrowsep{1.33pc}\xymatrixcolsep{4pc}\def\labelstyle{\displaystyle}
\xymatrix{
 &u  \ar@/_3ex/[dddl]  \ar@{}@/_3ex/[dddl]|(0.65){\circ}="e1"  \ar@/_2ex/[ddd]  \ar@{}@/_2ex/[ddd]|{\circ}="e2"  \ar@/^2ex/[ddd]  \ar@{}@/^2ex/[ddd]|{\circ}="f2"  \ar@/^3ex/[dddr]  \ar@{}@/^3ex/[dddr]|(0.65){\circ}="f1"  \dotedge"e1";"e2"  \dotedge"f1";"f2" 
 &&&u  \ar@/_3ex/[dddl]  \ar@{}@/_3ex/[dddl]|(0.65){\circ}="ee1"  \ar@/_2ex/[ddd]  \ar@{}@/_2ex/[ddd]|{\circ}="ee2"  \ar@/^2ex/[ddd]  \ar@{}@/^2ex/[ddd]|{\circ}="ff2"  \ar@/^3ex/[dddr]  \ar@{}@/^3ex/[dddr]|(0.65){\circ}="ff1"  \dotedge"ee1";"ee2"  \dotedge"ff1";"ff2"  \\  \\  \\
y_0  \ar[ddd]  \ar[ddd]|(0.5){\circ}="g12"  \ar@/^2ex/[ddr]  \ar@/^2ex/[ddr]|(0.4){\circ}="g11"  
&x_0  \ar[dddl]  \ar[dddl]|(0.65){\circ}="h12"  \ar@/_4ex/[ddd]  \ar@/_4ex/[ddd]|(0.65){\circ}="h22"  \ar@/^4ex/[ddd]  \ar@/^4ex/[ddd]|(0.65){\circ}="g22"  \ar[dddr]  \ar[dddr]|(0.65){\circ}="g21" &z_0  \ar@/_2ex/[ddl]  \ar@/_2ex/[ddl]|(0.4){\circ}="h11"  \ar[ddd]  \ar[ddd]|(0.5){\circ}="h21"  \dotedge"g12";"g11" \dotedge"g21";"g22" \dotedge"h11";"h21" \dotedge"h12";"h22"  
 &y_0  \ar[ddd]  \ar[ddd]|(0.5){\circ}  
&x_0  \ar[dddl]  \ar[dddl]|(0.65){\circ}="hh12"  \ar@/_4ex/[ddd]  \ar@/_4ex/[ddd]|(0.65){\circ}="hh22"  \ar@/^4ex/[ddd]  \ar@/^4ex/[ddd]|(0.65){\circ}="gg22"  \ar[dddr]  \ar[dddr]|(0.65){\circ}="gg21" &z_0   \ar[ddd]  \ar[ddd]|(0.5){\circ}  \dotedge"gg21";"gg22"  \dotedge"hh12";"hh22"  \\  \\
 &a_1  \\
y_1 \ar@{}[dd]|{\vdots}  &x_1 \ar@{}[dd]|{\vdots} &z_1 \ar@{}[dd]|{\vdots}
 &y_1 \ar@{}[dd]|{\vdots}  &x_1 \ar@{}[dd]|{\vdots} &z_1 \ar@{}[dd]|{\vdots}  \\  \\
 y_{n-1}  \ar[ddd]  \ar[ddd]|(0.5){\circ}="g12"  \ar@/^2ex/[ddr]  \ar@/^2ex/[ddr]|(0.4){\circ}="g11"  &x_{n-1}  \ar[dddl]  \ar[dddl]|(0.65){\circ}="h12" \ar@/_4ex/[ddd] \ar@/_4ex/[ddd]|(0.65){\circ}="h22"  \ar@/^4ex/[ddd]  \ar@/^4ex/[ddd]|(0.65){\circ}="g22"  \ar[dddr]  \ar[dddr]|(0.65){\circ}="g21" &z_{n-1}  \ar@/_2ex/[ddl]  \ar@/_2ex/[ddl]|(0.4){\circ}="h11"  \ar[ddd]  \ar[ddd]|(0.5){\circ}="h21"  \dotedge"g12";"g11" \dotedge"g21";"g22" \dotedge"h11";"h21" \dotedge"h12";"h22" 
 &y_{n-1}  \ar[ddd]  \ar[ddd]|(0.5){\circ}
  &x_{n-1}  \ar[dddl]  \ar[dddl]|(0.65){\circ}="hh12" \ar@/_4ex/[ddd] \ar@/_4ex/[ddd]|(0.65){\circ}="hh22"  \ar@/^4ex/[ddd]  \ar@/^4ex/[ddd]|(0.65){\circ}="gg22"  \ar[dddr]  \ar[dddr]|(0.65){\circ}="gg21" &z_{n-1}   \ar[ddd]  \ar[ddd]|(0.5){\circ}  \dotedge"gg21";"gg22"  \dotedge"hh12";"hh22"  \\  \\
 &a_n  \\
y_n \ar@{}[dd]|{\vdots} &x_n \ar@{}[dd]|{\vdots} &z_n \ar@{}[dd]|{\vdots}
 &y_n \ar@{}[dd]|{\vdots} &x_n \ar@{}[dd]|{\vdots} &z_n \ar@{}[dd]|{\vdots}  \\  \\
 &{\save+<0ex,-2ex> \drop{(E,C)} \restore} & &&{\save+<0ex,-2ex> \drop{(\Ebar,\Cbar)} \restore} &
}$$
\caption{}  \label{EC}
\end{figure}
}

\subsection{Three graph monoids}  \label{3graphmons}

We label the monoids of the three separated graphs introduced in \S\ref{3sepgraphs} as follows:
$$\calM_0:= M(E_0,C^0), \qquad\quad \calM := M(E,C), \qquad\quad \Mbar := M(\Ebar,\Cbar).$$
These are conical commutative monoids, as noted above. Note that $u$ is an order-unit in each of these monoids, because there are paths from $u$ to each vertex of any of the three graphs.  In all three monoids, we have $u= x_0+y_0$, which means that $u$ can be omitted from the set of generators. In particular, $\calM_0$ has the monoid presentation
$$\calM_0 = \langle x_0, \, y_0, \, z_0 \mid x_0+y_0 = x_0+z_0 \rangle.$$

We can present $\calM$ by the generators
$$x_0,y_0,z_0,a_1,x_1,y_1,z_1,a_2,x_2,y_2,z_2,\dots$$
and the relations
\begin{equation}
\begin{gathered}
x_0+y_0 = x_0+z_0\,,\qquad\qquad  y_l = y_{l+1}+a_{l+1}\,,\qquad\qquad  z_l = z_{l+1}+a_{l+1}\,,  \\  
x_l = x_{l+1}+y_{l+1} =x_{l+1}+z_{l+1}\,.
\end{gathered}  \label{MECrelns}
\end{equation}
By \cite[Proposition 5.9 or Theorem 8.9]{AG}, $\calM$ is a refinement monoid. We give a direct proof of this in Proposition \ref{Mref}.

The monoid $\Mbar$ can be presented by the generators
$$x_0,y_0,z_0,x_1,y_1,z_1,x_2,y_2,z_2,\dots$$
and the relations
\begin{equation}
\begin{gathered}
x_0+y_0 = x_0+z_0\,,\qquad\qquad  y_l = y_{l+1}\,,\qquad\qquad  z_l = z_{l+1}\,,  \\  
x_l = x_{l+1}+y_{l+1} =x_{l+1}+z_{l+1}\,.
\end{gathered}  \notag
\end{equation}
The generators $y_n$ and $z_n$ for $n>0$ are redundant, and we write the remaining generators with overbars to avoid confusion between $\Mbar$ and $\calM$. Thus, $\Mbar$ is presented by the generators
$$\xbar_0,\ybar_0,\zbar_0,\xbar_1,\xbar_2,\dots$$
and the relations
\begin{equation}  \label{relnsMbar}
\xbar_0+\ybar_0= \xbar_0+\zbar_0\,, \qquad\qquad \xbar_l= \xbar_{l+1}+\ybar_0= \xbar_{l+1}+\zbar_0 \,.
\end{equation}
We shall see below that $\Mbar$ is a quotient of $\calM$ modulo an o-ideal (Lemma \ref{Mbarisom}). This corresponds to the fact that $(\Ebar,\Cbar)$ is a quotient of $(E,C)$ in the sense of \cite[Construction 6.8]{AG}.

\subsection{Structure of $\calM$}  \label{structureM}

It is convenient to identify the following submonoids $\calA_n$ and $\calM_n$ of $\calM$:
$$\calA_n := \sum_{i=1}^n \Zplus a_i \qquad\text{and}\qquad \calM_n := \sum_{i=0}^n \bigl( \Zplus x_i+ \Zplus y_i+ \Zplus z_i \bigr) +\calA_n  \,,$$
for $n\in \N$. We shall see shortly that the submonoid $\Zplus x_0+ \Zplus y_0+ \Zplus z_0$ of $\calM$ can be identified with the monoid $\calM_0$ defined in \S\ref{3graphmons}.

\begin{lemma}  \label{equalMn}
{\rm(a)} For each $n\in\N$, the monoid $\calM_n$ is generated by $a_1,\dots,a_n$, $x_n$, $y_n$, $z_n$.

{\rm(b)} Let $n\in \Zplus$ and $m,m',i,i',j,j',k_1,k'_1,\dots,k_n,k'_n \in \Zplus$. Then
\begin{equation}  \label{Mneqn}
mx_n+ iy_n+ jz_n+ \sum_{l=1}^n k_la_l= m'x_n+ i'y_n+ j'z_n+ \sum_{l=1}^n k'_la_l
\end{equation}
in $\calM$ if and only if
\begin{equation}  \label{Mnconds}
\begin{aligned}
&(m=m'=0, \;\, i=i', \;\, j=j', \;\, k_l=k'_l \;\, \text{for all} \;\, l) \;\, \text{or}  \\
&(m=m'>0, \;\, i+j=i'+j', \;\, k_l=k'_l \;\, \text{for all} \;\, l)
\end{aligned}
\end{equation}

{\rm(c)} The natural map $\calM_0\rightarrow \calM$ gives an isomorphism of $\calM_0$ onto $\Zplus x_0+ \Zplus y_0+ \Zplus z_0$.
\end{lemma}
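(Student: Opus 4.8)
The plan is to dispatch (a) and (c) quickly and to concentrate on (b), whose ``only if'' direction is the real content. Throughout I will use the elementary fact that $x_n+y_n=x_n+z_n$ holds for every $n\ge 0$: for $n=0$ it is the defining relation $x_0+y_0=x_0+z_0$, and for $n\ge1$ both sides equal $x_{n-1}$ via $x_{n-1}=x_n+y_n=x_n+z_n$. An immediate induction then gives $x_n+jz_n=x_n+jy_n$ for all $j\in\Zplus$, hence $mx_n+iy_n+jz_n=mx_n+(i+j)y_n$ whenever $m>0$.

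For (a), I would rewrite the lower-index generators using the relations: iterating $y_l=y_{l+1}+a_{l+1}$ and $z_l=z_{l+1}+a_{l+1}$ gives $y_i=y_n+a_{i+1}+\cdots+a_n$ and $z_i=z_n+a_{i+1}+\cdots+a_n$ for $i<n$, while iterating $x_l=x_{l+1}+y_{l+1}$ gives $x_i=x_n+y_{i+1}+\cdots+y_n$; substituting the expressions for the $y_j$ writes each $x_i$ in terms of $x_n,y_n,a_1,\dots,a_n$. Thus $\calM_n$ is generated by $a_1,\dots,a_n,x_n,y_n,z_n$, the reverse inclusion being clear. The ``if'' direction of (b) is then immediate: the first alternative of \eqref{Mnconds} makes the two sides of \eqref{Mneqn} identical, and under the second alternative the preliminary fact makes both sides equal to $mx_n+(i+j)y_n+\sum_l k_la_l$.

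For the ``only if'' direction I would construct monoid homomorphisms out of $\calM$, each time checking that the defining relations \eqref{MECrelns} are respected on every level. The homomorphism $\phi_1\colon\calM\to\Zplus$ sending every $x_l\mapsto1$ and every $y_l,z_l,a_l\mapsto0$ detects $m=m'$. For each fixed $l\le n$, assigning value $0$ to $x_n,y_n,z_n$, value $1$ to $a_l$, value $0$ to the remaining $a$'s, and propagating through the relations produces a homomorphism $\calM\to\Z$ whose value on \eqref{Mneqn} is the coefficient of $a_l$, giving $k_l=k_l'$. The homomorphism $\phi_3\colon\calM\to\Z$ with $y_l,z_l\mapsto1$, $a_l\mapsto0$, $x_l\mapsto-l$ respects all relations (a $\Z$-valued target is forced here, since $y_l,z_l\mapsto1$ together with the relation $x_l=x_{l+1}+y_{l+1}$ would need $x_l$ unbounded in $\Zplus$); together with $m=m'$ it yields $i+j=i'+j'$, which settles the case $m>0$.

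The remaining, and main, obstacle is to separate $y_n$ from $z_n$ when $m=0$: the swap relations force $\phi(y_l)=\phi(z_l)$ in every cancellative target, and the infinite tower $x_l=x_{l+1}+y_{l+1}$ rules out a separating homomorphism into $\calM_0$. I would overcome this by mapping into the explicit non-cancellative monoid $T:=(\Zplus\times\Zplus)\sqcup\{\omega\}$, where $\omega$ is absorbing ($\omega+t=\omega$ for all $t\in T$) and the $\Zplus\times\Zplus$ part adds coordinatewise; then the assignment $y_l\mapsto(1,0)$, $z_l\mapsto(0,1)$, $a_l\mapsto(0,0)$, $x_l\mapsto\omega$ defines a homomorphism $\phi\colon\calM\to T$ precisely because $\omega$ absorbs the $y$--$z$ difference in the relations involving $x$. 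When $m=m'=0$ it gives $\phi(\text{LHS})=(i,j)$ and $\phi(\text{RHS})=(i',j')$, so $i=i'$ and $j=j'$; collecting the four outputs reproduces exactly the dichotomy \eqref{Mnconds}. Finally, (c) is the case $n=0$: the natural map $\calM_0\to\calM$ surjects onto $\Zplus x_0+\Zplus y_0+\Zplus z_0$ by construction, and applying (b) with $n=0$ to an equality of images shows the coefficients satisfy \eqref{Mnconds}, which by the ``if'' argument already forces the two elements to coincide in $\calM_0$; hence the map is injective, and so an isomorphism.
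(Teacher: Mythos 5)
Your proof is correct and takes essentially the same approach as the paper's: the ``only if'' direction of (b) is established via explicit homomorphisms out of $\calM$, and the crucial case $m=m'=0$ is handled by the identical device of sending every $x_l$ to an adjoined absorbing element so that $y_n$ and $z_n$ can be separated in a non-cancellative target (the paper maps into $A\sqcup\{\infty\}$ with $A$ a free abelian group on $\{\beta,\gamma\}\sqcup\{\alpha_l \mid l\in\N\}$, you into $(\Zplus\times\Zplus)\sqcup\{\omega\}$). The only difference is bookkeeping: the paper's two comprehensive homomorphisms extract several invariants at once (in particular the equalities $k_l=k'_l$ come along automatically via the basis elements $\alpha_l$), whereas you use a separate scalar-valued homomorphism for each invariant ($m$, each $k_l$, $i+j$, and the pair $(i,j)$); both versions are valid.
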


\begin{proof} (a) This is clear from the fact that
\begin{align*}
y_l &= y_{l+1}+a_{l+1}  &z_l &= z_{l+1}+a_{l+1}  &x_l &= x_{l+1}+y_{l+1}  
\end{align*}
for all $l=0,\dots,n-1$.

(b) Since $x_n+y_n = x_n+z_n$, we have
$$x_n+ iy_n+ jz_n= x_n+ (i+j)y_n \qquad \text{and} \qquad x_n+ i'y_n+ j'z_n= x_n+ (i'+j')y_n \,. $$
This is all that is needed for the implication \eqref{Mnconds}$\Longrightarrow$\eqref{Mneqn}. 

Conversely, assume that \eqref{Mneqn} holds. In view of the presentation of $\calM$ given in \S\ref{3graphmons}, there exists a homomorphism $f: \calM \rightarrow \Zplus$ such that
$$f(x_l)=1 \qquad\quad \text{and} \qquad\quad f(y_l)= f(z_l)= f(a_l) =0$$
for all $l$. Applying $f$ to \eqref{Mneqn} yields $m=m'$.

Assume first that $m=0$, let $A$ be a free abelian group with a basis $\{\beta,\gamma\} \sqcup \{\alpha_l \mid l\in\N \}$, and enlarge $A$ to a monoid $A\sqcup \{\infty\}$ by adjoining an infinity element to $A$. There exists a homomorphism $g: \calM \rightarrow A\sqcup \{\infty\}$ such that
\begin{align*}
g(x_l) &= \infty,  &g(y_l) &= \beta+ \alpha_1+ \cdots+ \alpha_l \,,  \\
g(a_l) &= -\alpha_l \,,  &g(z_l) &= \gamma+ \alpha_1+ \cdots+ \alpha_l
\end{align*}
for all $l$. Applying $g$ to \eqref{Mneqn} yields
$$i\beta+ j\gamma+ \sum_{l=1}^n (i+j-k_l)\alpha_l = i'\beta+ j'\gamma+ \sum_{l=1}^n (i'+j'-k'_l)\alpha_l \,,$$
from which the first alternative of \eqref{Mnconds} follows.

Now suppose that $m>0$. There exists a homomorphism $h: \calM \rightarrow A$ such that
\begin{align*}
h(a_l) &= -\alpha_l \,,  &h(y_l) &= h(z_l) = \beta+ \alpha_1+ \cdots+ \alpha_l \,,  &h(x_l) &= -l\beta+ \sum_{k=1}^l (k-l-1)\alpha_k
\end{align*}
for all $l$. Applying $h$ to \eqref{Mneqn} yields
\begin{align*}
(i+j-mn)\beta &+ \sum_{l=1}^n ( m(l-n-1) +i+j-k_l)\alpha_l  =  \\
 & (i'+j'-mn)\beta+ \sum_{l=1}^n ( m(l-n-1) +i'+j'-k'_l)\alpha_l \,,
 \end{align*}
from which the second alternative of \eqref{Mnconds} follows.

(c) The homomorphism in question, call it $\eta$, sends the generators $x_0$, $y_0$, $z_0$ of $\calM_0$ to the elements of $\calM$ denoted by the same symbols. Part (b) implies that $\eta$ is injective, and the result follows.
\end{proof}

\begin{remark}  \label{M0notref}
We now identify $\calM_0$ with the submonoid $\Zplus x_0+ \Zplus y_0+ \Zplus z_0$ of $\calM$, and we  observe that $\calM_0$ is not a refinement monoid. Namely, it follows from Lemma \ref{equalMn} that $x_0$, $y_0$, and $z_0$ are distinct irreducible elements in $\calM_0$, and hence the equation $x_0+y_0 = x_0+z_0$ has no refinement in $\calM_0$.
\end{remark}

\begin{corollary}  \label{Mnisom}
For $n\in\N$, there are isomorphisms $(\Zplus)^n \oplus\calM_0 \rightarrow \calM_n$ and $(\Zplus)^n \rightarrow \calA_n$ given by the rules
\begin{align*}
\bigl( (k_1,\dots,k_n),\, mx_0+iy_0+jz_0 \bigr) &\longmapsto mx_n+iy_n+jz_n+ k_1a_1+ \cdots+ k_na_n \\
(k_1,\dots,k_n) & \longmapsto k_1a_1+ \cdots+ k_na_n \,.
\end{align*}
\end{corollary}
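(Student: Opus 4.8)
The plan is to read off both isomorphisms directly from Lemma \ref{equalMn}, treating Corollary \ref{Mnisom} as essentially a bookkeeping restatement of parts (a) and (b). First I would handle the map $\theta : (\Zplus)^n \oplus \calM_0 \to \calM_n$. By Lemma \ref{equalMn}(a), $\calM_n$ is generated by $a_1,\dots,a_n,x_n,y_n,z_n$, and each of these lies in the image of the stated rule (send the appropriate basis vector, or one of $x_0,y_0,z_0$); hence $\theta$ is surjective. The real content is injectivity, and I would extract it cleanly from Lemma \ref{equalMn}(b). Since $\calM_0 \cong \Zplus x_0+\Zplus y_0+\Zplus z_0$ via part (c), a general element of $(\Zplus)^n\oplus\calM_0$ is a tuple $\bigl((k_1,\dots,k_n),\,mx_0+iy_0+jz_0\bigr)$, and its image is the left-hand side of \eqref{Mneqn}. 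Two such elements have equal images in $\calM$ exactly when the corresponding equation \eqref{Mneqn} holds, which by part (b) happens if and only if condition \eqref{Mnconds} holds.

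The subtle point — and the step I expect to be the main (minor) obstacle — is that \eqref{Mnconds} does \emph{not} immediately say the two source tuples coincide, because of the second alternative: when $m=m'>0$, part (b) only forces $i+j=i'+j'$, not $i=i'$ and $j=j'$. Thus $\theta$ would fail to be injective if $\calM_0$ were the free monoid on $x_0,y_0,z_0$. The resolution is precisely that we are mapping from $\calM_0$, not from $(\Zplus)^3$: in $\calM_0$ one has the relation $x_0+y_0=x_0+z_0$, so whenever $m=m'>0$ the elements $mx_0+iy_0+jz_0$ and $m'x_0+i'y_0+j'z_0$ are already \emph{equal in $\calM_0$} as soon as $m=m'$ and $i+j=i'+j'$. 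Therefore in every case permitted by \eqref{Mnconds} the two source tuples agree, giving injectivity. I would phrase this by invoking Lemma \ref{equalMn}(b) one further time, now interpreted inside $\calM_0$ with $n=0$: there it says exactly that $mx_0+iy_0+jz_0=m'x_0+i'y_0+j'z_0$ in $\calM_0$ iff \eqref{Mnconds} (with the empty $a$-data) holds. So the two images coincide in $\calM$ iff the two preimages coincide in $(\Zplus)^n\oplus\calM_0$, which is simultaneously surjectivity-plus-injectivity and the homomorphism property.

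Finally I would confirm $\theta$ is a homomorphism, which is routine: the rule is $\Zplus$-linear in each coordinate and additive across the two summands, since $a_l$, $x_n$, $y_n$, $z_n$ are being assigned independently and $\calM_0\to\calM$ is already a homomorphism by part (c). The second isomorphism $(\Zplus)^n\to\calA_n$ is the restriction of $\theta$ to the summand $(\Zplus)^n\oplus\{0\}$ composed with the evident identification, so it is a homomorphism onto $\calA_n=\sum_{i=1}^n\Zplus a_i$ by definition; its injectivity is the special case $m=m'=0$ of \eqref{Mnconds}, where part (b) forces $k_l=k'_l$ for all $l$. Alternatively, and perhaps more cleanly, the statement for $\calA_n$ is an immediate instance of Proposition \ref{irredelementgenideal}(b) once one notes that $a_1,\dots,a_n$ are distinct irreducible elements of the conical refinement monoid $\calM$; I would mention this as the quick route while keeping the self-contained derivation from Lemma \ref{equalMn}(b) as the primary argument.
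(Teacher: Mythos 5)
Your primary argument is correct and is essentially the paper's own proof: the paper likewise reads both isomorphisms off from Lemma \ref{equalMn} (well-definedness and injectivity from part (b), surjectivity and the homomorphism property being clear), and your identification of the key subtlety --- that the second alternative of \eqref{Mnconds}, namely $m=m'>0$ with $i+j=i'+j'$, matches exactly the identification $mx_0+iy_0+jz_0=mx_0+(i+j)y_0$ forced by the relation $x_0+y_0=x_0+z_0$ in $\calM_0$ --- is precisely the reason the corollary is stated with $\calM_0$ rather than $(\Zplus)^3$. One small wording slip: the biconditional ``images coincide iff preimages coincide'' gives well-definedness plus injectivity, not ``surjectivity-plus-injectivity''; surjectivity is your separate part-(a) argument.

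A caution about your proposed ``quick route'' for $\calA_n$ via Proposition \ref{irredelementgenideal}(b): within the paper's own development this is circular. Proposition \ref{irredelementgenideal} requires the ambient monoid to be a conical \emph{refinement} monoid and the $a_i$ to be irreducible, but the paper's direct proof that $\calM$ has refinement (Proposition \ref{Mref}) itself uses Corollary \ref{Mnisom} (refinements are produced inside $\calA_n\cong(\Zplus)^n$), and irreducibility of the $a_i$ is only established later (Lemma \ref{Mirredelts}(a)), again using Lemma \ref{equalMn}. One could appeal to the external citation of \cite{AG} for refinement of $\calM$, but as a self-contained argument the route fails; so your instinct to keep the Lemma \ref{equalMn}(b) derivation as the primary proof is the right one.
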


\begin{proof} By Lemma \ref{equalMn}, the displayed rules give well defined injective maps from $(\Zplus)^n \oplus\calM_0$ to $\calM_n$ and $(\Zplus)^n$ to $\calA_n$, respectively. It is clear that they are surjective homomorphisms.
\end{proof}

The above results allow us to give a direct proof that $\calM$ has refinement, as follows. We will need the equations
\begin{align}  \label{xyzntok}
y_n &= y_k+ \sum_{l=n+1}^k a_l \,,  &z_n &= z_k+ \sum_{l=n+1}^k a_l \,,  &x_n &= x_k+ (k-n)y_k+ \sum_{l=n+1}^k (l-n-1)a_l \,,
\end{align}
for $k>n\ge0$, which follow directly from the relations \eqref{MECrelns}.

\begin{proposition}  \label{Mref}
$\calM$ is a refinement monoid.
\end{proposition}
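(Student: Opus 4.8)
The plan is to verify Riesz refinement directly from the presentation \eqref{MECrelns}, exploiting the filtration $\calM=\bigcup_n\calM_n$. First I would observe that the $\calM_n$ form an increasing chain with union $\calM$: by Lemma \ref{equalMn}(a) each $\calM_n$ is generated by $a_1,\dots,a_n,x_n,y_n,z_n$, and the relations $x_n=x_{n+1}+y_{n+1}$, $y_n=y_{n+1}+a_{n+1}$, $z_n=z_{n+1}+a_{n+1}$ exhibit these generators inside $\calM_{n+1}$, while every generator of $\calM$ appears in some $\calM_n$. Hence any refinement instance $\alpha_1+\alpha_2=\beta_1+\beta_2$ has all four terms in a common $\calM_n$, and it suffices to find refining elements somewhere in $\calM$ (I will permit them to lie in $\calM_{n+1}$). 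I would then use the homomorphism $f\colon\calM\to\Zplus$ with $f(x_l)=1$ and $f(y_l)=f(z_l)=f(a_l)=0$ (well defined by \eqref{MECrelns}), which records the ``$x$-content'' of an element, and put $m_i:=f(\alpha_i)$, $p_j:=f(\beta_j)$, $M:=m_1+m_2=p_1+p_2$.

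The base case is $M=0$. Then $\alpha_1,\alpha_2,\beta_1,\beta_2$ all lie in $R_n:=\Zplus y_n+\Zplus z_n+\calA_n$, which by the $m=m'=0$ clause of Lemma \ref{equalMn}(b) is freely generated by $y_n,z_n,a_1,\dots,a_n$. A free commutative monoid has refinement, so a refinement already exists inside $R_n$; equivalently, $\ker f=\bigcup_n R_n$ is an inductive limit of free monoids and so is itself a refinement monoid.

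For $M>0$ I would write $\alpha_i=m_ix_n+\rho_i$ and $\beta_j=p_jx_n+\sigma_j$ with $\rho_i,\sigma_j\in R_n$. By Lemma \ref{equalMn}(b) the equation forces the $a$-coordinate data and the total $(y{+}z)$-counts of the two sides to coincide, while the split of that total between $y$ and $z$ may vary from term to term; this mismatch is the whole difficulty. The repair uses the two splittings $x_n=x_{n+1}+y_{n+1}=x_{n+1}+z_{n+1}$ available in $\calM_{n+1}$: a cell of the refinement matrix carrying positive $x$-content has a free choice of whether to emit a $y_{n+1}$ or a $z_{n+1}$, and this is exactly the slack needed to reconcile the $y/z$ discrepancy. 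I would first refine the content equation $m_1+m_2=p_1+p_2$ in $\Zplus$ to get multiplicities $c_{ij}$, refine the (free) $a$-coordinate data, place an $x_{n+1}$-block of size $c_{ij}$ in cell $(i,j)$, and then distribute the $y_{n+1}$, $z_{n+1}$, and $a_{n+1}$ terms---choosing each token's emission---so that every row sums to $\alpha_i$ and every column to $\beta_j$. The model case $x_0+y_0=x_0+z_0$ already shows the device: it refines as $\bordermatrix{&x_0&z_0\cr x_0&x_1&z_1\cr y_0&y_1&a_1}$, the shared token being split as $x_0=x_1+z_1$ along its row but as $x_0=x_1+y_1$ down its column.

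I expect the combinatorial assignment in the last step to be the main obstacle: one must show that the balanced $y$-versus-$z$ discrepancy can always be absorbed by the available $x$-tokens, and in particular that when a content-carrying cell is ``tight'' (its row or column term is essentially $m_ix_n$ with no spare $y$ or $z$ on the needed side) the expansion $x_n=x_{n+1}+y_{n+1}$ or $x_n=x_{n+1}+z_{n+1}$ manufactures precisely the missing generator. This is the exact point at which a single level $\calM_n$ is insufficient---indeed it is why $\calM_0$ fails refinement (Remark \ref{M0notref}) once $M>0$---so the argument must genuinely descend to higher-index generators. I would organize the bookkeeping as an induction on $M$: fix $i_0,j_0$ with $m_{i_0},p_{j_0}>0$, use one shared $x_n$-token in cell $(i_0,j_0)$, via the two splittings, to neutralize one unit of discrepancy as in the model case, and recurse on the residual instance of content $M-1$.
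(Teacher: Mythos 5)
Your setup---reduction to a common $\calM_n$, stripping the free $\calA_n$-part, and measuring $x$-content with the homomorphism $f$---matches the first half of the paper's proof, and you correctly identify the $y$/$z$ discrepancy as the crux. But the device you propose to resolve it is quantitatively too weak, and this is a genuine gap, not a bookkeeping issue. In your scheme each $x$-token is demoted exactly one level, via $x_n = x_{n+1}+y_{n+1} = x_{n+1}+z_{n+1}$, so it contributes exactly one freely-choosable emission to its row and one to its column; hence the total discrepancy your matrix can absorb is bounded by the content $M=f(\alpha_1)+f(\alpha_2)$. The discrepancy, however, is not bounded by any function of $M$. Concretely, take $x_0+2y_0 = x_0+2z_0$, where $M=1$. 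Your induction then consists of a single (final) step, and the residual content-zero instance you would recurse on, $w_1 + 2y_0 = w'_1 + 2z_0$ with $w_1,w'_1 \in \{y_1,z_1\}$, is false in $\calM$ for every choice: both sides have content $0$, so by Lemma \ref{equalMn}(b) their $y_1$-counts must agree, yet the left count is at least $2$ and the right count is at most $1$. Worse, your standing restriction that refining elements lie in $\calM_{n+1}$ already makes the statement unprovable: if $(\delta_{ij})$ were a refinement of this equation with entries in $\calM_1$, applying $f$ forces $f(\delta_{11})=1$ and $f(\delta_{12})=f(\delta_{21})=f(\delta_{22})=0$; reading row $2$ and column $2$ in the free content-zero part $\Zplus y_1+\Zplus z_1+\Zplus a_1$ forces $\delta_{21}=2y_1+ta_1$, and then column $1$ reads $\delta_{11}+2y_1+ta_1 = x_0$, contradicting Lemma \ref{equalMn}(b) because the left side has $(y{+}z)$-total at least $2$ while $x_0 = x_1+y_1$ has total $1$.

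The missing idea is unbounded descent. Iterating \eqref{MECrelns} gives \eqref{xyzntok}: $x_n = x_k + (k-n)y_k + \sum_{l=n+1}^k (l-n-1)a_l$, so a single $x$-token descended $k-n$ levels emits $k-n$ tokens, each of which can be traded between $y_k$ and $z_k$ against the surviving $x_k$ (since $x_k+y_k = x_k+z_k$). This is precisely how the paper finishes after the same reduction you perform: it chooses $k$ so large that the pumped coefficient $\beta_{10}(k-n)+\beta_{11}$ dominates the discrepancy (arranging $\beta_{11}\ge \gamma_{21}+\gamma_{22}$), and only then writes an explicit $2\times 2$ refinement matrix at level $k$. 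For the example above, $k=2$ suffices:
$$\bordermatrix{ &c_1=x_0 &c_2=2z_0 \cr b_1=x_0 & x_2+a_2 & 2z_2 \cr b_2=2y_0 & 2y_2 & 2a_1+2a_2 \cr}.$$
So the depth of descent must depend on the coefficients of the four elements, not just on their $x$-content. If you replace your one-level demotion and induction on $M$ by a single descent to a sufficiently deep level $k$, your transportation-style assignment does go through---the margins match once the slack $(k-n)M$ exceeds the discrepancy---and the argument becomes essentially the paper's; without unbounded descent it cannot be repaired.
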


\begin{proof} Let $b_1,b_2,c_1,c_2 \in \calM$ such that $b_1+b_2= c_1+c_2$, and choose $n\in\N$ such that these elements all lie in $\calM_n$. Write each
$$b_i= \beta_{i0}x_n+ \beta_{i1}y_n+ \beta_{i2}z_n+ b'_i \qquad\text{and}\qquad c_j= \gamma_{j0}x_n+ \gamma_{j1}y_n+ \gamma_{j2}z_n+ c'_j$$
with coefficients $\beta_{is}, \gamma_{js} \in \Zplus$ and elements $b'_i,c'_j \in \calA_n$. Substituting these expressions into the equation $b_1+b_2= c_1+c_2$ and applying Lemma \ref{equalMn}, we find that  $b'_1+b'_2= c'_1+c'_2$ and
\begin{equation}  \label{cut}
\begin{aligned}
(\beta_{10}x_n+ \beta_{11}y_n+ \beta_{12}z_n) &+ (\beta_{20}x_n+ \beta_{21}y_n+ \beta_{22}z_n)  \\
&= (\gamma_{10}x_n+ \gamma_{11}y_n+ \gamma_{12}z_n) + (\gamma_{20}x_n+ \gamma_{21}y_n+ \gamma_{22}z_n) .
\end{aligned}
\end{equation}
The equation $b'_1+b'_2= c'_1+c'_2$ has refinements in $\calA_n$, because that monoid is isomorphic to $(\Zplus)^n$. Any such refinement, added to a refinement for \eqref{cut}, will yield a refinement for $b_1+b_2= c_1+c_2$. Thus, 
\begin{equation}  \label{reducerefinement}
\text{If} \;\, \eqref{cut} \;\, \text{has a refinement, then} \;\, b_1+b_2= c_1+c_2 \;\, \text{has a refinement}.
\end{equation}
This allows us to assume that $b'_i= c'_j =0$ for all $i,j=1,2$.

In view of Lemma \ref{equalMn}, the submonoids $\Zplus x_n+ \Zplus y_n$ and $\Zplus y_n+ \Zplus z_n$ are both isomorphic to $(\Zplus)^2$. Hence, the desired refinement exists in case the elements $b_1$, $b_2$, $c_1$, $c_2$ either all lie in $\Zplus x_n+ \Zplus y_n$ or all lie in $\Zplus y_n+ \Zplus z_n$.

Now assume that at least one of  $b_1$, $b_2$, $c_1$, $c_2$ is not in  $\Zplus x_n+ \Zplus y_n$ and at least one of these elements is not in $\Zplus y_n+ \Zplus z_n$. If $b_1$ and $b_2$ are both in $\Zplus y_n+ \Zplus z_n$, so is $c_1+c_2$, from which we see that $c_1,c_2 \in \Zplus y_n+ \Zplus z_n$, contradicting our choices. Thus, we may assume that $b_1\notin \Zplus y_n+ \Zplus z_n$, and similarly that $c_1\notin \Zplus y_n+ \Zplus z_n$. It follows that $b_1,c_1 \in \N x_n+ \Zplus y_n$. Now $b_2$ and $c_2$ cannot both be in $\Zplus x_n+ \Zplus y_n$; without loss of generality, $b_2 \notin \Zplus x_n+ \Zplus y_n$. It follows that $b_2\in \Zplus y_n+ \N z_n$. Thus, we may assume that the coefficients $\beta_{is}$, $\gamma_{js}$ have been chosen so that
$$\beta_{10}, \gamma_{10}, \beta_{22} > 0, \qquad\quad \beta_{12}, \gamma_{12}, \beta_{20} = 0, \qquad\quad \gamma_{22}=0 \;\, \text{if} \;\, \gamma_{20}>0.$$
There are now two cases to consider, depending on whether $\gamma_{20}$ is zero or not.

First, suppose that $\gamma_{20}=0$. Then
$$\beta_{10}x_n+ (\beta_{11}+ \beta_{21}+ \beta_{22})y_n = \gamma_{10}x_n+ (\gamma_{11}+ \gamma_{21}+ \gamma_{22})y_n \,,$$
and so Lemma \ref{equalMn} implies that $\beta_{10}= \gamma_{10}$ and $\beta_{11}+ \beta_{21}+ \beta_{22} = \gamma_{11}+ \gamma_{21}+ \gamma_{22}$. For any $k>n$, \eqref{xyzntok} shows that
\begin{align*}
b_1 &= \beta_{10}x_k+ (\beta_{10}(k-n)+ \beta_{11})y_k+ \sum_{l=n+1}^k (\beta_{10}(l-n-1)+ \beta_{11})a_l  \\
b_2 &= \beta_{21}y_k+ \beta_{22}z_k+ \sum_{l=n+1}^k (\beta_{21}+\beta_{22})a_l  \\
c_1 &= \gamma_{10}x_k+ (\gamma_{10}(k-n)+ \gamma_{11})y_k+ \sum_{l=n+1}^k (\gamma_{10}(l-n-1)+ \gamma_{11})a_l  \\
c_2 &= \gamma_{21}y_k+ \gamma_{22}z_k+ \sum_{l=n+1}^k (\gamma_{21}+\gamma_{22})a_l  \,.
\end{align*}
Substitute these expressions in the equation $b_1+b_2= c_1+c_2$, and apply \eqref{reducerefinement}. This shows that it suffices to find a refinement for
\begin{align*}
\bigl[ \beta_{10}x_k+ (\beta_{10}(k-n)+ \beta_{11})y_k \bigr] &+ \bigl[ \beta_{21}y_k+ \beta_{22}z_k \bigr] =  \\
 &\bigl[ \gamma_{10}x_k+ (\gamma_{10}(k-n)+ \gamma_{11})y_k \bigr] + \bigl[ \gamma_{21}y_k+ \gamma_{22}z_k \bigr].
\end{align*}
Consequently, we may replace $\beta_{11}$ and $\gamma_{11}$ by $\beta_{10}(k-n)+ \beta_{11}$ and $\gamma_{10}(k-n)+ \gamma_{11}$, for any $k>n$. Therefore, after choosing a sufficiently large $k$ and making the above replacement, we may now assume that
$$\beta_{11} \ge \gamma_{21}+ \gamma_{22} \,.$$

Recall that $\beta_{11}+ \beta_{21}+ \beta_{22} = \gamma_{11}+ \gamma_{21}+ \gamma_{22}$, so that $\beta_{11}- \gamma_{21}- \gamma_{22}= \gamma_{11}- \beta_{21}- \beta_{22}$. Since also $\beta_{10}= \gamma_{10}$, we now have a refinement
$$\bordermatrix{
 &c_1&c_2  \cr
b_1 & \beta_{10}x_n+ (\beta_{11}-\gamma_{21}-\gamma_{22})y_n &\gamma_{21}y_n+ \gamma_{22}z_n  \cr
b_2 &\beta_{21}y_n+ \beta_{22}z_n &0 \cr
}.$$

Finally, suppose that $\gamma_{20} >0$. Then
$$\beta_{10}x_n+ (\beta_{11}+ \beta_{21}+ \beta_{22})y_n = (\gamma_{10}+ \gamma_{20})x_n + (\gamma_{11}+\gamma_{21})y_n \,,$$
whence $\beta_{10}= \gamma_{10}+ \gamma_{20}$ and $\beta_{11}+ \beta_{21}+ \beta_{22}= \gamma_{11}+\gamma_{21}$. As in the previous case, we may assume that $\beta_{11} \ge \gamma_{21}$. This allows the refinement
$$\bordermatrix{
 &c_1&c_2  \cr
b_1 & \gamma_{10}x_n+ (\beta_{11}-\gamma_{21})y_n &\gamma_{20}x_n+ \gamma_{21}y_n  \cr
b_2 &\beta_{21}y_n+ \beta_{22}z_n &0 \cr
},$$
completing the proof.
\end{proof}

\begin{lemma}  \label{stateM}
$\calM$ is conical, stably finite, archimedean, and antisymmetric.
\end{lemma}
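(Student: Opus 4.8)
The plan is to treat conicality as already established (it was noted when the three monoids were introduced at the start of \S\ref{3graphmons}), to derive antisymmetry formally from conicality together with stable finiteness (recall that every stably finite conical monoid is antisymmetric), and to obtain stable finiteness and the archimedean property simultaneously from a single tool: an additive, order-preserving ``weight'' homomorphism $w \colon \calM \to (\R_{\ge 0}, +)$ that is strictly positive on every generator.

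To build $w$, I would assign to each generator at ``level'' $l$ the weight $2^{-l}$; that is, $w(x_l) = w(y_l) = w(z_l) = 2^{-l}$ for $l \ge 0$ and $w(a_l) = 2^{-l}$ for $l \ge 1$, and then check that this assignment respects the defining relations \eqref{MECrelns}. The relation $x_0 + y_0 = x_0 + z_0$ holds because $y_0$ and $z_0$ receive equal weight, while each of $y_l = y_{l+1} + a_{l+1}$, $z_l = z_{l+1} + a_{l+1}$, and $x_l = x_{l+1} + y_{l+1} = x_{l+1} + z_{l+1}$ reduces to the identity $2^{-l} = 2 \cdot 2^{-(l+1)}$. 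Hence $w$ extends to a monoid homomorphism, and since it is additive it is automatically order-preserving for the algebraic orderings. The decisive feature is positivity: every generator has strictly positive weight, so $w(a) > 0$ for each nonzero $a \in \calM$ (a nonzero element is represented by a sum of at least one generator, and $w$ is single-valued on $\calM$).

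With $w$ in hand the three remaining properties fall out. For stable finiteness, $x + y = x$ gives $w(y) = 0$, hence $y = 0$. For the archimedean property, $nx \le y$ for all $n \in \N$ gives $n\, w(x) \le w(y)$ in $\R$ for all $n$; as $w(y)$ is a finite nonnegative real, this forces $w(x) = 0$, whence $x = 0$, the only invertible element of the conical monoid $\calM$. Antisymmetry then follows from conicality together with stable finiteness, as indicated above.

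I expect essentially no serious obstacle; the only point requiring thought is finding strictly positive weights compatible with the relations. Here the ``descending'' relations $y_l = y_{l+1} + a_{l+1}$ and $x_l = x_{l+1} + y_{l+1}$ force the weights to strictly decrease along the levels, so a constant positive weight cannot succeed, whereas the geometric choice $2^{-l}$ does. As an alternative that avoids real coefficients, one could instead argue level by level, using Corollary \ref{Mnisom} to identify $\calM_n$ with $(\Zplus)^n \oplus \calM_0$ and Lemma \ref{equalMn} to control equalities, and then pass to the union $\calM = \bigcup_n \calM_n$; but the single weight homomorphism disposes of conicality, stable finiteness, and the archimedean property uniformly.
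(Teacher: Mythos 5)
Your proposal is correct and is essentially the paper's own argument: the paper constructs the same weight homomorphism $s\colon \calM \rightarrow \Q^+$ with $s(x_n)=s(y_n)=s(z_n)=s(a_n)=1/2^n$, observes that $s^{-1}(0)=\{0\}$ since every generator has nonzero weight, and deduces conicality, stable finiteness, and the archimedean property at once, with antisymmetry following from conicality plus stable finiteness. The only (immaterial) difference is your choice of codomain $(\R_{\ge 0},+)$ in place of $\Q^+$.
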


\begin{proof} Conicality was noted in \S\ref{3graphmons}, but will also be an immediate consequence of the present proof. Antisymmetry will follow once we have shown that $\calM$ is conical and stably finite.

There is a homomorphism $s: \calM\rightarrow \Q^+$ such that $s(x_n)= s(y_n)= s(z_n)= s(a_n)= 1/2^n$ for all $n$. Since the generators of $\calM$ are all mapped to nonzero elements of $\Q^+$, we have $s^{-1}(0)= \{0\}$. Conicality, stable finiteness, and the archimedean property follow immediately.
\end{proof}
 
 \begin{theorem}  \label{Mwild}
 The monoid $\calM$ is a wild refinement monoid.
 \end{theorem}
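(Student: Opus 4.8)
The plan is to combine the structural facts already assembled about $\calM$ with the dichotomy for stably finite tame refinement monoids, and argue by contradiction. By Proposition \ref{Mref}, $\calM$ is a refinement monoid, and by Lemma \ref{stateM} it is conical and stably finite. Thus, \emph{if} $\calM$ were tame, Theorem \ref{tame:stablyfinite} would force $\calM$ to be cancellative. So the entire proof reduces to exhibiting a single failure of cancellation in $\calM$.

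First I would point to the defining relation $x_0 + y_0 = x_0 + z_0$, which holds in $\calM$ by the presentation given in \S\ref{3graphmons}. Were cancellation to hold, we could cancel $x_0$ and conclude $y_0 = z_0$. The task therefore becomes verifying that $y_0 \ne z_0$ in $\calM$, i.e.\ that the elements which collapse after adding $x_0$ are genuinely distinct.

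Second, I would invoke Lemma \ref{equalMn}(b) with $n = 0$ (equivalently, the identification of $\calM_0$ with $\Zplus x_0 + \Zplus y_0 + \Zplus z_0$ from part (c), together with Remark \ref{M0notref}). Taking $m = m' = 0$, with $(i,j) = (1,0)$ on one side and $(i',j') = (0,1)$ on the other, the equation \eqref{Mneqn} reads $y_0 = z_0$. Neither alternative of the criterion \eqref{Mnconds} is satisfied: the first requires $i = i'$ (here $1 \ne 0$), and the second requires $m = m' > 0$ (here $m = 0$). Hence \eqref{Mneqn} fails, so $y_0 \ne z_0$ and $\calM$ is not cancellative.

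Finally, assembling these observations, $\calM$ is a stably finite refinement monoid that fails cancellation, so by the contrapositive of Theorem \ref{tame:stablyfinite} it cannot be tame; that is, $\calM$ is wild. The only substantive input is the injectivity statement encoded in Lemma \ref{equalMn}, which separates $y_0$ from $z_0$; everything else is a direct appeal to the previously established dichotomy, so I do not anticipate any real obstacle beyond citing these ingredients correctly.
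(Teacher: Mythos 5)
Your proposal is correct and follows exactly the paper's own argument: refinement from Proposition \ref{Mref}, stable finiteness from Lemma \ref{stateM}, failure of cancellation from $x_0+y_0=x_0+z_0$ together with $y_0\ne z_0$ (via Lemma \ref{equalMn}), and then the contrapositive of Theorem \ref{tame:stablyfinite}. The only difference is that you spell out the case analysis in Lemma \ref{equalMn}(b) explicitly, which the paper leaves implicit.
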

 
 \begin{proof} Refinement holds by Proposition \ref{Mref}, and $\calM$ is stably finite by Lemma \ref{stateM}. However, $\calM$ is not cancellative, since $x_0+y_0= x_0+z_0$, whereas $y_0\ne z_0$ by Lemma \ref{equalMn}. Therefore Theorem \ref{tame:stablyfinite} shows that $\calM$ is wild. 
  \end{proof}

\begin{lemma}  \label{Mseparative}
$\calM$ is separative and unperforated.
\end{lemma}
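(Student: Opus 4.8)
The plan is to establish separativity and unperforation together by exploiting the very explicit description of $\calM$ furnished by Lemma \ref{equalMn} and Corollary \ref{Mnisom}. The key structural fact is that $\calM$ is the directed union of the submonoids $\calM_n$, each of which is isomorphic to $(\Zplus)^n \oplus \calM_0$, and $\calM_0 = \langle x_0,y_0,z_0 \mid x_0+y_0=x_0+z_0\rangle$. Since both separativity and unperforation are defined by conditions on finitely many elements, and any finite set of elements of $\calM$ lies in some $\calM_n$, it would suffice to verify the two properties inside each $\calM_n$ --- \emph{provided} I am careful that the relevant equalities and order relations witnessing $2x=2y=x+y$ or $mx\le my$ can be pulled back to a common $\calM_n$ rather than only holding in the larger monoid. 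This is exactly the kind of issue that Lemma \ref{equalMn}(b) is designed to settle: it gives a complete description of when two standard-form expressions in $\calM_n$ are equal \emph{in $\calM$}, so equalities detected in $\calM$ are already equalities in $\calM_n$.

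Concretely, I would first reduce both properties to the monoid $(\Zplus)^n \oplus \calM_0$ via Corollary \ref{Mnisom}. Separativity and unperforation are each preserved under finite direct sums (the conditions decompose coordinatewise, and $(\Zplus)^n$ is cancellative hence trivially separative and unperforated), so the whole question collapses to showing that $\calM_0$ itself is separative and unperforated. Thus the heart of the argument is a direct analysis of $\calM_0 = \langle x_0,y_0,z_0 \mid x_0+y_0 = x_0+z_0\rangle$. Here I would use the two homomorphisms already built in the proof of Lemma \ref{equalMn} --- the map $f:\calM\to\Zplus$ counting the $x$-content, and the map $g:\calM\to A\sqcup\{\infty\}$ --- restricted to $\calM_0$, to read off exactly which equalities hold. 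An element of $\calM_0$ has a normal form $mx_0+iy_0+jz_0$ with $m,i,j\in\Zplus$, subject only to the relation identifying $x_0+iy_0+jz_0$ with $x_0+(i+j)y_0$ when $m>0$; so $\calM_0$ has two ``regimes'': the cancellative free part $\{iy_0+jz_0\}\cong(\Zplus)^2$ when $m=0$, and a part with $m>0$ where the $y_0,z_0$ coordinates collapse to their sum.

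For separativity I would take $a,b\in\calM_0$ with $2a=2b=a+b$, write both in normal form, apply $f$ to force the $x_0$-coefficients equal, and then split on whether that common coefficient $m$ is zero or positive; in the $m=0$ case cancellativity of $(\Zplus)^2$ gives $a=b$ immediately, while in the $m>0$ case the relation shows $a$ and $b$ are each determined by their $x_0$-coefficient and the sum $i+j$, and the hypothesis $2a=a+b$ pins these down to force $a=b$. For unperforation I would similarly take $ka\le kb$ in $\calM_0$, lift to a witnessing equation $ka + c = kb$, put everything in normal form, and use $f$ together with $g$ to deduce $a\le b$; again the case $m=0$ reduces to the unperforation (indeed cancellativity) of $(\Zplus)^2$, and the case $m>0$ is handled by tracking only the $x_0$-coefficient and the collapsed sum. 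The main obstacle I anticipate is the bookkeeping in the $m>0$ regime of $\calM_0$: one must be sure that the collapse $y_0,z_0\mapsto y_0+\cdots$ does not secretly introduce perforation or break separativity, and this is precisely where the explicit homomorphism $g$ into $A\sqcup\{\infty\}$ earns its keep, since it distinguishes the finer $(\Zplus)^2$-structure that survives when $m=0$. A cleaner alternative, which I would keep in reserve, is simply to invoke Theorem \ref{tame:sepunperf}'s companion observation from \S\ref{background} that \emph{for refinement monoids unperforation implies separativity}; then it would remain only to prove unperforation of $\calM$ directly, again reducing through $\calM_n$ to $\calM_0$ and checking that the single defining relation cannot be perforated.
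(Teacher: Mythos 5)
Your proposal is correct and follows essentially the same route as the paper's proof: reduce through the directed union $\calM=\bigcup_n \calM_n$ and the decomposition $\calM_n\cong(\Zplus)^n\oplus\calM_0$ of Corollary \ref{Mnisom} to the single monoid $\calM_0$, then analyze normal forms there via Lemma \ref{equalMn}. The only (immaterial) difference is how separativity is obtained: the paper gets it for free from unperforation, since antisymmetry (Lemma \ref{stateM}) plus unperforation yields torsionfreeness ($ma=mb\implies a=b$), which implies separativity, whereas you prove it by the same coordinatewise computation; your reserve alternative also works, provided it is applied to $\calM$ itself (a refinement monoid by Proposition \ref{Mref}) and not to $\calM_0$, which fails refinement by Remark \ref{M0notref}.
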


\begin{proof} Since $\calM$ is antisymmetric (Lemma \ref{stateM}), it will follow from lack of perforation that $\calM$ is torsionfree, i.e., $(ma=mb \implies a=b)$ for any $m\in \N$ and $a,b\in \calM$. Torsionfreeness implies separativity. Thus, we just need to prove that $\calM$ is unperforated.

It suffices to prove that each $\calM_n$ is unperforated. Since $(\Zplus)^n$ is unperforated, Corollary \ref{Mnisom} shows that it is enough to prove that $\calM_0$ is unperforated.

Suppose that $m\in\N$ and $a,b\in \calM_0$ with $ma\le mb$ in $\calM_0$, that is, $ma+c =mb$ for some $c\in \calM_0$. Write
\begin{align*}
a &= \alpha_0x_0+ \alpha_1y_0+ \alpha_2z_0 \,,  &b &= \beta_0x_0+ \beta_1y_0+ \beta_2z_0 \,,  &c &= \gamma_0x_0+ \gamma_1y_0+ \gamma_2z_0 \,,
\end{align*}
with coefficients $\alpha_i,\beta_i,\gamma_i \in\Zplus$. Then
$$(m\alpha_0+\gamma_0)x_0+ (m\alpha_1+\gamma_1)y_0+ (m\alpha_2+\gamma_2)z_0 = m\beta_0x_0+ m\beta_1y_0+ m\beta_2z_0 \,,$$
and so Lemma \ref{equalMn} implies that $m\alpha_0+\gamma_0 = m\beta_0$. Thus, $\alpha_0\le \beta_0$.

Assume first that $\beta_0=0$, which forces $\alpha_0= \gamma_0 =0$. By Lemma \ref{equalMn}, $m\alpha_i+ \gamma_i= m\beta_i$ for $i=1,2$, and so each $\alpha_i \le \beta_i$. Consequently,
$$a+ (\beta_1-\alpha_1)y_0+ (\beta_2-\alpha_2)z_0= \beta_1y_0+ \beta_2z_0= b,$$
proving that $a\le b$ in $\calM_0$.

Now assume that $\beta_0>0$. In this case, Lemma \ref{equalMn} implies that 
$$m\alpha_1+\gamma_1+ m\alpha_2+\gamma_2 = m\beta_1+ m\beta_2 \,,$$
and so $\alpha_1+\alpha_2 \le \beta_1+ \beta_2$. Consequently,
\begin{align*}
a+ (\beta_0-\alpha_0)x_0+ (\beta_1+ \beta_2- \alpha_1- \alpha_2)z_0 &= \beta_0x_0+ \alpha_1y_0+ (\beta_1+ \beta_2- \alpha_1)z_0  \\
 &= \beta_0x_0+ (\beta_1+ \beta_2)y_0 =b,
 \end{align*}
and again $a\le b$ in $\calM_0$.
\end{proof}

We conclude the subsection with the following information about the structure of $\calM$. More about the ideals of $\calM$ will appear in the following subsection.

Recall that $u= x_0+y_0$.

\begin{lemma}  \label{Mirredelts}
{\rm (a)} The elements $a_1,a_2,\dots$ are distinct irreducible elements in $\calM$. 

{\rm (b)} The submonoid $J_1 := \sum_{n=1}^\infty \Zplus a_n$
is an o-ideal of $\calM$, and $J_1 \cong (\Zplus)^{(\N)}$.

{\rm (c)} Every nonzero element of $\calM$ dominates at least one $a_n$.

{\rm (d)} $J_1= \ped (\calM )$.

{\rm (e)} For $n\in\N$, we have $na_n\le u$ but $(n+1)a_n \nleq u$.
\end{lemma}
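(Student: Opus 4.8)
The plan is to prove each of the five parts (a)--(e) of Lemma \ref{Mirredelts} largely by translating statements in $\calM$ into the explicit arithmetic of the submonoids $\calM_n$ using Lemma \ref{equalMn} and Corollary \ref{Mnisom}. The key observation throughout is that every element of $\calM$ lies in some $\calM_n$, so any finite statement (an equation, an irreducibility assertion, an order relation) can be tested inside $\calM_n \cong (\Zplus)^n \oplus \calM_0$, where the structure is completely understood.

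For (a), I would first check that each $a_n$ is irreducible: if $a_n = s+t$ with $s,t \in \calM$, then choosing $N\ge n$ with $s,t\in\calM_N$ and applying Lemma \ref{equalMn} (or Corollary \ref{Mnisom}, under which $\calA_N \cong (\Zplus)^N$ and $a_n$ is a basis vector) forces one summand to be $0$ and the other to be $a_n$; I also need $a_n\ne 0$, which follows from the homomorphism $s$ of Lemma \ref{stateM} since $s(a_n)=1/2^n\ne 0$. Distinctness likewise follows from Lemma \ref{equalMn}, or simply because the $a_n$ are distinct basis vectors in $\calA_N$. Part (b) is then nearly immediate: $J_1=\sum_{n\ge 1}\Zplus a_n$ is the increasing union of the $\calA_N$, each isomorphic to $(\Zplus)^N$ by Corollary \ref{Mnisom}, so $J_1\cong(\Zplus)^{(\N)}$; that $J_1$ is an o-ideal is exactly Proposition \ref{irredelementgenideal}(a) applied to the family $(a_n)$, once (a) is established.

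The substantive part is (c), which says every nonzero element of $\calM$ dominates some $a_n$; I expect this to be the main obstacle, since it is the one genuinely global statement. I would take a nonzero element $w\in\calM_n$ and write it in the normal form from Lemma \ref{equalMn}, namely $w = mx_n+iy_n+jz_n+\sum_{l=1}^n k_l a_l$ with the coefficients in $\Zplus$. If some $k_l>0$ then $a_l\le w$ directly. Otherwise $w=mx_n+iy_n+jz_n$ with at least one of $m,i,j$ positive, and here I would use the relations \eqref{xyzntok}: each of $x_n$, $y_n$, $z_n$ dominates $a_{n+1}$ (explicitly $y_n = y_{n+1}+a_{n+1}$, $z_n=z_{n+1}+a_{n+1}$, and $x_n = x_{n+1}+y_{n+1}$ with $y_{n+1}=y_{n+2}+a_{n+2}$, so all three dominate some $a_l$). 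Hence $w$ dominates an $a_l$ as well. The care needed is to rule out the possibility that $w=0$ has sneaked in, i.e.\ that a nonzero element could fail to dominate any $a_n$; the homomorphism $s$ guarantees $w\ne 0$ is detected, and the normal-form analysis shows domination is forced.

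Part (d) is then a consequence of (a), (c), and the definition of the pedestal: by Definition \ref{def:socle}, $\ped(\calM)$ is generated by all irreducible elements, and by (a) the $a_n$ are irreducible, giving $J_1\subseteq\ped(\calM)$; for the reverse inclusion I must show there are no other irreducible elements, which follows from (c) because any irreducible $v$ dominates some $a_n$, and then irreducibility of $v$ together with $v=a_n+(\text{something})$ forces $v=a_n$ (using conicality and that $a_n$ is not invertible). Finally, for (e) I would work in $\calM_n$ (or $\calM_0$ after transporting via the relations) and compute directly: using $u=x_0+y_0$ and the relations \eqref{xyzntok} to express $u$ and $na_n$ in terms of the generators at level $n$, Lemma \ref{equalMn} lets me decide the two order relations $na_n\le u$ and $(n+1)a_n\nleq u$ by comparing coefficients. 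The inequality $na_n\le u$ should come from expanding $u=x_0+y_0$ downward to level $n$ and reading off that the coefficient of $a_n$ that appears is at least $n$, while $(n+1)a_n\nleq u$ follows because that coefficient is exactly $n$, so adding $(n+1)a_n$ would violate the coefficient constraints dictated by \eqref{Mnconds}.
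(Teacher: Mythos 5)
Your proposal is correct, and for parts (b)--(e) it follows essentially the paper's own proof: (c) is the same normal-form argument plus \eqref{xyzntok} (the paper expands $mx_n+iy_n+jz_n$ to level $n+2$ in a single step and observes that the coefficient $m+i+j$ of $a_{n+2}$ is positive, where you case-split on which of $m,i,j$ is positive -- same content); (d) is word-for-word the paper's argument; and (e) is decided, as you say, by comparing the coefficient of $a_n$ in $u = x_k+(k+1)y_k+\sum_{l=1}^k la_l$ against the constraints \eqref{Mnconds}. The one genuinely different piece is part (a): the paper proves irreducibility of $a_n$ by constructing a homomorphism $g\colon \calM \to A\sqcup\{\infty\}$, where $A$ is free abelian on $\{\alpha_l \mid l \in \N\}$, with $g(x_l)=g(y_l)=g(z_l)=\infty$ and $g(a_l)=\alpha_l$, and pulling irreducibility of $\alpha_n$ back along $g$ (using $g^{-1}(0)=\{0\}$); you instead reduce an equation $a_n=s+t$ to coefficient arithmetic in some $\calM_N$ via Lemma \ref{equalMn}(b), whose first alternative forces $\{s,t\}=\{0,a_n\}$. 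Both are valid; your route has the advantage of reusing machinery already established rather than building a fresh homomorphism, though Lemma \ref{equalMn} was itself proved with homomorphisms of exactly this flavor, so the difference is largely organizational. One small precision point in (e): the witness $b$ with $(n+1)a_n+b=u$ need not lie in $\calM_n$, so you must choose $k\ge n$ with $b\in\calM_k$ and run the coefficient comparison at level $k$, where the coefficient of $a_n$ in $u$ is still exactly $n$; this is how the paper phrases it, and your argument goes through verbatim once this is said.
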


\begin{proof} (a) The $a_n$ are distinct by Lemma \ref{equalMn}. 

Let $A$ be a free abelian group with a basis $\{\alpha_n \mid n\in\N\}$. There is a homomorphism $g: \calM \rightarrow A\sqcup \{\infty\}$ such that
$$g(x_n)= g(y_n)= g(z_n)= \infty \qquad\quad \text{and} \qquad\quad g(a_n)= \alpha_n$$
for all $n$, and since $g$ maps the generators of $\calM$ to nonzero elements of the conical monoid $A\sqcup \{\infty\}$, we see that $g^{-1}(0)= \{0\}$. If $v,w\in \calM$ with $v+w=a_n$ for some $n$, then $g(v)+g(w)= \alpha_n$. Since $\alpha_n$ is irreducible in $A\sqcup \{\infty\}$, either $g(v)=0$ or $g(w)=0$, and thus $v=0$ or $w=0$. This shows that $a_n$ is irreducible in $\calM$.

(b) These properties follow from Proposition \ref{irredelementgenideal}.

(c) Let $b$ be a nonzero element of $\calM$. Then $b\in \calM_n$ for some $n$, and so
$$b= mx_n+ iy_n+ jz_n+ \sum_{l=1}^n k_la_l$$
with coefficients in $\Zplus$, not all zero. If some $k_l>0$, then $b\ge a_l$. Otherwise,
\begin{align*}
b &= mx_n+ iy_n+ jz_n = m(x_{n+1}+y_{n+1}) + i(y_{n+1}+a_{n+1})+ j(z_{n+1}+a_{n+1})  \\
 &= mx_{n+2}+ (2m+i)y_{n+2}+ jz_{n+2}+ (i+j)a_{n+1}+ (m+i+j)a_{n+2} \,,
\end{align*}
from which it follows that $b\ge a_{n+2}$.

(d) Clearly $J_1\subseteq \ped (\calM)$. If $a$ is an irreducible element in $\calM$, then $a_n\le a$ for some $n$ by (c), so that $a=a_n$.
This shows that $\ped (\calM ) \subseteq J_1$. 

(e) It follows from \eqref{xyzntok} that $u= x_n+ (n+1)y_n+ \sum_{l=1}^n la_l$ for $n\in \N$, whence $u\ge na_n$. Suppose $(n+1)a_n \le u$. Then $(n+1)a_n+b =u$ for some $b\in \calM$, say $b\in \calM_k$ for some $k\ge n$. Write $b= mx_k+ iy_k+ jz_k+ \sum_{l=1}^k k_la_l$ with coefficients in $\Zplus$. Since $u= x_k+ (k+1)y_k+ \sum_{l=1}^k la_l$, Lemma \ref{equalMn} implies that $n+1+k_n= n$, which is impossible. Therefore $(n+1)a_n \nleq u$.
\end{proof}

\subsection{Structure of $\Mbar$}

\begin{lemma}  \label{Mbarisom}
There is a surjective homomorphism $q: \calM \rightarrow \Mbar$ such that
\begin{align*}
q(a_n) &= 0,  &q(x_n) &= \xbar_n \,,  &q(y_n) &= \ybar_0 \,, &q(z_n) &= \zbar_0
\end{align*}
for all $n$, and $q$ induces an isomorphism of $\calM/J_1$ onto $\Mbar$.
\end{lemma}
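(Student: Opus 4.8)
The plan is to define $q$ directly from the presentation of $\calM$ and then exhibit an explicit two-sided inverse to the induced map on $\calM/J_1$, so that the isomorphism statement reduces to routine checks on generators.

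First I would define $q$ by prescribing the stated values on the generators $x_0,y_0,z_0$ and $a_n,x_n,y_n,z_n$ ($n\ge1$) of $\calM$, and verify that the defining relations \eqref{MECrelns} are respected, so that $q$ is a well-defined homomorphism by the universal property of the presentation. Each check is immediate: the relation $x_0+y_0=x_0+z_0$ maps to the first relation $\xbar_0+\ybar_0=\xbar_0+\zbar_0$ of \eqref{relnsMbar}; the relations $y_l=y_{l+1}+a_{l+1}$ and $z_l=z_{l+1}+a_{l+1}$ become the trivial identities $\ybar_0=\ybar_0+0$ and $\zbar_0=\zbar_0+0$ since $q(a_{l+1})=0$; and $x_l=x_{l+1}+y_{l+1}=x_{l+1}+z_{l+1}$ maps to the second family $\xbar_l=\xbar_{l+1}+\ybar_0=\xbar_{l+1}+\zbar_0$ of \eqref{relnsMbar}. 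Surjectivity is clear, since the image of $q$ contains all the generators $\xbar_n,\ybar_0,\zbar_0$ of $\Mbar$.

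Next I would observe that $q$ annihilates $J_1$: as $J_1=\sum_n\Zplus a_n$ and each $q(a_n)=0$, we get $q(J_1)=\{0\}$. Consequently $q$ is constant on $\equiv_{J_1}$-classes (if $v+a=w+b$ with $a,b\in J_1$, then $q(v)=q(v+a)=q(w+b)=q(w)$), so $q$ factors through a surjective homomorphism $\bar{q}:\calM/J_1\to\Mbar$. To finish, writing $[w]$ for the class of $w\in\calM$ in $\calM/J_1$, I would construct $r:\Mbar\to\calM/J_1$ by setting $r(\xbar_n)=[x_n]$ for $n\ge0$, $r(\ybar_0)=[y_0]$, $r(\zbar_0)=[z_0]$, and check the relations \eqref{relnsMbar}. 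The first holds because $x_0+y_0=x_0+z_0$ already in $\calM$. For the second, the key computation is that $[y_{l+1}]=[y_0]$ and $[z_{l+1}]=[z_0]$ in $\calM/J_1$: iterating $y_l=y_{l+1}+a_{l+1}$ gives $y_0=y_{l+1}+\sum_{i=1}^{l+1}a_i$ with $\sum_i a_i\in J_1$, and similarly for $z$. Combined with $x_l=x_{l+1}+y_{l+1}$ in $\calM$, this yields $[x_l]=[x_{l+1}]+[y_{l+1}]=[x_{l+1}]+[y_0]$, and likewise $[x_l]=[x_{l+1}]+[z_0]$, which is exactly $r$ applied to the second family of \eqref{relnsMbar}; hence $r$ is well defined. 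I would then verify $\bar{q}\circ r=\id_{\Mbar}$ and $r\circ\bar{q}=\id_{\calM/J_1}$ on generators, using $q(x_n)=\xbar_n$, $q(y_n)=\ybar_0$, $q(z_n)=\zbar_0$, $q(a_n)=0$ together with $[y_n]=[y_0]$, $[z_n]=[z_0]$, $[a_n]=0$; both composites act as the identity on generators, so $\bar{q}$ is an isomorphism.

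The relation-checking is routine, and the presentations line up almost term-by-term once $J_1$ is collapsed. The one genuinely substantive point is that the $\Mbar$-relation $\xbar_l=\xbar_{l+1}+\ybar_0$ is forced in $\calM/J_1$ only after identifying $[y_{l+1}]$ with $[y_0]$ modulo the $a_i$, so that the $\calM$-relation $x_l=x_{l+1}+y_{l+1}$ can be transported into the required form; I expect no obstacle beyond this bookkeeping.
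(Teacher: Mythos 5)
Your proposal is correct and follows essentially the same route as the paper's proof: define $q$ on generators via the presentation of $\calM$, note that it kills $J_1$ and hence induces $\bar q:\calM/J_1\to\Mbar$, then build the inverse map from the presentation of $\Mbar$ using the congruences $y_n\equiv_{J_1}y_0$ and $z_n\equiv_{J_1}z_0$, and check the two composites on generators. The paper simply compresses the relation-checking that you carry out explicitly into the phrases ``the existence of $q$ is clear'' and ``clearly, $q'$ and $\qbar$ are mutual inverses.''
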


\begin{proof} Since $\ybar_n= \ybar_0$ and $\zbar_n= \zbar_0$ for all $n$, the existence of $q$ is clear. This homomorphism sends all elements of $J_1$ to $0$, and so it induces a homomorphism $\qbar: \calM/J_1\rightarrow \Mbar$. Since $y_n \equiv_{J_1} y_{n+1}$ and $z_n \equiv_{J_1} z_{n+1}$ for all $n$, there is a homomorphism $q': \Mbar \rightarrow \calM/J_1$ such that
\begin{align*}
q'(\ybar_0) &= (y_0/{\equiv}_{J_1}),  &q'(\zbar_0) &= (z_0/{\equiv}_{J_1}),  &q'(\xbar_n) &= (x_n/{\equiv}_{J_1})
\end{align*}
for all $n$. Clearly, $q'$ and $\qbar$ are mutual inverses.
\end{proof}

\begin{corollary}  \label{Mbarunperf}
$\Mbar$ is separative and unperforated.
\end{corollary}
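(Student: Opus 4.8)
The statement to prove is Corollary~\ref{Mbarunperf}: that $\Mbar$ is separative and unperforated.

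\medskip

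The plan is to derive both properties for $\Mbar$ directly from the corresponding properties of $\calM$, using the fact already established in Lemma~\ref{Mbarisom} that $\Mbar \cong \calM/J_1$ is a quotient of $\calM$ by the o-ideal $J_1$. The key input is Lemma~\ref{Mseparative}, which asserts that $\calM$ itself is separative and unperforated. The only other ingredient needed is the fact, recorded in the background section (\S\ref{background}, citing \cite[Lemma 4.3]{AGOP}), that both separativity and unperforation pass from a monoid $M$ to any quotient $M/J$ by an o-ideal $J$.

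\medskip

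Concretely, I would argue as follows. By Lemma~\ref{Mbarisom}, the map $q$ induces an isomorphism $\calM/J_1 \cong \Mbar$, where $J_1 = \sum_{n=1}^\infty \Zplus a_n$ is an o-ideal of $\calM$ by Lemma~\ref{Mirredelts}(b). By Lemma~\ref{Mseparative}, $\calM$ is separative and unperforated. Since separativity and unperforation are both inherited by quotients modulo o-ideals, the quotient $\calM/J_1$ is separative and unperforated, and therefore so is $\Mbar$.

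\medskip

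I do not anticipate any genuine obstacle here: the corollary is essentially a bookkeeping consequence of results already in place. The substantive work was done upstream --- establishing refinement and the structural description of $\calM$ (Proposition~\ref{Mref}, Lemma~\ref{equalMn}, Corollary~\ref{Mnisom}), proving unperforation of $\calM$ by reducing to $\calM_0$ (Lemma~\ref{Mseparative}), and identifying $\Mbar$ as $\calM/J_1$ (Lemma~\ref{Mbarisom}). The one point worth stating explicitly is the citation guaranteeing that these two properties descend to quotients by o-ideals; with that invoked, the proof is a single line. An alternative, self-contained route would be to verify unperforation of $\Mbar$ directly from the presentation \eqref{relnsMbar} by constructing suitable test homomorphisms into ordered groups, mirroring the proof of Lemma~\ref{equalMn}, but this would merely reprove work already subsumed by the quotient argument and is not worth the effort.
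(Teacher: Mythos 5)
Your proof is correct and is exactly the paper's own argument: the paper likewise deduces the corollary from Lemma~\ref{Mseparative} together with the fact (from \S\ref{background}, citing \cite[Lemma 4.3]{AGOP}) that separativity and unperforation pass to quotients by o-ideals, applied to $\Mbar \cong \calM/J_1$ from Lemma~\ref{Mbarisom}. Nothing is missing.
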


\begin{proof} Separativity and unperforation, which hold in $\calM$ by Lemma \ref{Mseparative}, pass to $\calM/J_1 \cong \Mbar$.
\end{proof}

\begin{lemma}  \label{Mbarstabfin}
$\Mbar$ is conical, stably finite, and antisymmetric.
\end{lemma}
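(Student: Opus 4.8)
The plan is to treat the three assertions in order, reducing the last two to the middle one. Conicality is essentially free: by Lemma \ref{Mbarisom} we have $\Mbar \cong \calM/J_1$, a quotient of a refinement monoid by an o-ideal, and such quotients are always conical (see \S\ref{background}); alternatively it drops out of the homomorphism constructed below. Antisymmetry is also free \emph{once stable finiteness is known}, since by Definitions \ref{primeandprimitive} every stably finite conical monoid is antisymmetric. So the entire content of the lemma is the stable finiteness of $\Mbar$.

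For stable finiteness I would \emph{not} try to inherit the property from $\calM$ (stably finite by Lemma \ref{stateM}): stable finiteness does not in general survive passage to a quotient by an o-ideal, even among refinement monoids, so a direct argument is needed. Instead, mimicking the state argument of Lemma \ref{stateM}, I would produce a single homomorphism $\Phi$ from $\Mbar$ into a cancellative monoid whose only preimage of $0$ is $0$. Concretely, take
$$\Phi: \Mbar \longrightarrow \Zplus \times \Z, \qquad \Phi(\xbar_n) = (1,-n), \quad \Phi(\ybar_0) = \Phi(\zbar_0) = (0,1),$$
and check directly against the relations \eqref{relnsMbar} that $\Phi$ is well defined (for instance $\Phi(\xbar_{l+1}+\ybar_0) = (1,-(l+1)) + (0,1) = (1,-l) = \Phi(\xbar_l)$, and $\Phi(\xbar_0+\ybar_0) = (1,1) = \Phi(\xbar_0+\zbar_0)$). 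Stable finiteness then follows formally: if $P+Q = P$ in $\Mbar$, then $\Phi(P) + \Phi(Q) = \Phi(P)$ in the cancellative monoid $\Zplus \times \Z$, whence $\Phi(Q) = 0$, and it remains only to see that $\Phi(Q) = 0$ forces $Q = 0$.

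The one computation that must be done honestly — and the step I expect to be the only real obstacle — is verifying that $\Phi^{-1}(0) = \{0\}$. For this I would first record a normal form: using $\xbar_l = \xbar_{l+1}+\ybar_0$ to push every $\xbar$-generator up to a common top index, every element of $\Mbar$ can be written as $m\xbar_n + i\ybar_0 + j\zbar_0$ with $m,i,j \in \Zplus$. Applying $\Phi$ to such an expression gives $(m,\, -mn+i+j)$, which vanishes exactly when $m=0$ and $i+j=0$, i.e. when $m=i=j=0$, i.e. when the element is $0$. Hence $\Phi^{-1}(0) = \{0\}$, completing stable finiteness and therefore the lemma. The subtlety worth flagging is that no homomorphism into $\R^+$ alone can detect $0$ here, since the relation $\xbar_l = \xbar_{l+1}+\ybar_0$ forces any additive real-valued state to vanish on $\ybar_0$ and $\zbar_0$; the second, $\Z$-valued coordinate $\xbar_n \mapsto -n$ is precisely what is needed to witness $\ybar_0,\zbar_0 \neq 0$. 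This same feature explains why $\Mbar$, unlike $\calM$, is not archimedean (we have $n\ybar_0 \le \xbar_0$ for all $n$ while $\ybar_0 \ne 0$), consistent with the lemma asserting only stable finiteness.
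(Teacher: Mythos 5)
Your proof is correct and follows essentially the same route as the paper: the paper defines $t:\Mbar\rightarrow B:=(\Zplus\times\{0\})+(\Z\times\N)\subseteq\Z^2$ by $t(\ybar_0)=t(\zbar_0)=(1,0)$ and $t(\xbar_n)=(1-n,1)$, which is your $\Phi$ up to an automorphism of $\Z^2$, and likewise gets stable finiteness from cancellation in $\Z^2$ together with $t^{-1}(0)=\{0\}$, then antisymmetry from conical plus stably finite. The only cosmetic difference is that the paper takes the codomain to be the conical submonoid $B$, so that $t^{-1}(0)=\{0\}$ (and hence conicality as well) is immediate from the generators having nonzero images, whereas you obtain conicality from $\Mbar\cong\calM/J_1$ and verify $\Phi^{-1}(0)=\{0\}$ by an explicit normal-form computation.
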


\begin{proof} Set $B := (\Zplus\times \{0\})+ (\Z\times\N)$, which is a conical submonoid of the group $\Z^2$. There is a homomorphism $t: \Mbar\rightarrow B$ such that
$$t(\ybar_0)= t(\zbar_0)= (1,0) \qquad\quad \text{and} \qquad\quad t(\xbar_n)= (1-n,1)$$
for all $n$. Since $t$ sends the generators of $\Mbar$ to nonzero elements of $B$, we have $t^{-1}(0)= \{0\}$. Conicality and stable finiteness follow, and these two properties imply antisymmetry.
\end{proof}

\begin{lemma}  \label{equalMbar}
Let $n \in \Zplus$.

{\rm(a)} $\Zplus \ybar_0+ \Zplus\zbar_0+ \sum_{l=0}^n \Zplus\xbar_l = \Zplus \ybar_0+ \Zplus\zbar_0+ \Zplus\xbar_n$.

{\rm(b)} Let $i,i',j,j',k,k' \in \Zplus$. Then
\begin{equation}  \label{Mbareqn}
i\ybar_0+ j\zbar_0+ k\xbar_n = i'\ybar_0+ j'\zbar_0+ k'\xbar_n
\end{equation}
in $\Mbar$ if and only if
\begin{equation}  \label{Mbarconds}
(k=k'=0, \;\, i=i', \;\, j=j') \quad \text{or} \quad (k=k'>0, \;\, i+j= i'+j').
\end{equation}
\end{lemma}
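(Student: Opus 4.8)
The plan is to mirror the proof of Lemma \ref{equalMn}, reducing everything to the behaviour of the three generators $\xbar_n$, $\ybar_0$, $\zbar_0$ and detecting the individual coefficients via a small family of test homomorphisms adapted to the relations \eqref{relnsMbar}.

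For part (a), I would first iterate the relation $\xbar_l= \xbar_{l+1}+\ybar_0$ to obtain $\xbar_l= \xbar_n+ (n-l)\ybar_0$ for every $l\le n$. Hence each $\xbar_l$ with $l\le n$ lies in $\Zplus\ybar_0+ \Zplus\xbar_n$, which gives the nontrivial inclusion; the reverse inclusion is immediate since $\xbar_n$ is one of the listed generators. For the ``if'' direction of (b), the key identity is $\xbar_m+\ybar_0= \xbar_m+\zbar_0$ for all $m\ge 0$: for $m=0$ this is the defining relation, and for $m\ge 1$ it follows from $\xbar_{m-1}= \xbar_m+\ybar_0= \xbar_m+\zbar_0$. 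Thus whenever $k>0$ we may absorb one copy of $\xbar_n$ to convert each $\zbar_0$ into a $\ybar_0$, yielding $k\xbar_n+ i\ybar_0+ j\zbar_0= k\xbar_n+ (i+j)\ybar_0$; this shows that either alternative in \eqref{Mbarconds} forces \eqref{Mbareqn}.

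For the ``only if'' direction, I would first apply the homomorphism $f:\Mbar\to\Zplus$ sending $\xbar_l\mapsto 1$ and $\ybar_0,\zbar_0\mapsto 0$ (which plainly respects all the relations) to \eqref{Mbareqn}, obtaining $k=k'$. When $k=k'=0$, I would introduce a homomorphism $g:\Mbar\to A\sqcup\{\infty\}$, where $A$ is the free abelian group on $\{\beta,\gamma\}$, defined by $g(\xbar_l)=\infty$, $g(\ybar_0)=\beta$, $g(\zbar_0)=\gamma$; sending every $\xbar_l$ to $\infty$ is exactly what makes the relation $\xbar_0+\ybar_0= \xbar_0+\zbar_0$ hold in $A\sqcup\{\infty\}$ while leaving $\beta$ and $\gamma$ independent. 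Applying $g$ then gives $i\beta+ j\gamma= i'\beta+ j'\gamma$ in $A$, whence $i=i'$ and $j=j'$. When $k=k'>0$, I would reuse the homomorphism $t:\Mbar\to B$ from Lemma \ref{Mbarstabfin}; evaluating $t$ on \eqref{Mbareqn} and comparing first coordinates (the second coordinates already agree because $k=k'$) yields $i+j= i'+j'$.

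The only genuine obstacle is the case $k=0$: an ordinary group-valued homomorphism cannot separate $\ybar_0$ from $\zbar_0$ without contradicting $\xbar_0+\ybar_0= \xbar_0+\zbar_0$, so the adjoined-infinity construction (precisely as in the $m=0$ case of Lemma \ref{equalMn}) is indispensable. Everything else amounts to routine verification that $f$, $g$, and $t$ respect the defining relations \eqref{relnsMbar}.
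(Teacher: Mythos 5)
Your proposal is correct and follows essentially the same route as the paper: the same homomorphism $f$ detecting $k=k'$, the same adjoined-infinity homomorphism $g$ killing the $\xbar_l$ while keeping $\ybar_0$, $\zbar_0$ independent (the paper maps into $(\Zplus)^2\sqcup\{\infty\}$ rather than $A\sqcup\{\infty\}$, an immaterial difference), and the same reuse of $t$ from Lemma \ref{Mbarstabfin} for the case $k=k'>0$. The only cosmetic difference is that you spell out the identity $\xbar_m+\ybar_0=\xbar_m+\zbar_0$ and the iteration $\xbar_l=\xbar_n+(n-l)\ybar_0$, which the paper leaves implicit.
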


\begin{proof} (a) This is clear from the fact that $\xbar_l= \xbar_{l+1}+ \ybar_0$ for all $l=0,\dots,n-1$.

(b) Since $i\ybar_0+ j\zbar_0+ \xbar_n= (i+j)\ybar_0+ \xbar_n$ and $i'\ybar_0+ j'\zbar_0+ \xbar_n= (i'+j')\ybar_0+ \xbar_n$, the implication \eqref{Mbarconds}$\Longrightarrow$\eqref{Mbareqn} is clear.

Conversely, assume that \eqref{Mbareqn} holds. There is a homomorphism $f: \Mbar\rightarrow \Zplus$ such that
$$f(\ybar_0)= f(\zbar_0) = 0  \qquad\quad \text{and} \qquad\quad f(\xbar_l) =1$$
for all $l$. Applying $f$ to \eqref{Mbareqn} yields $k=k'$.

Assume first that $k=0$. There is a homomorphism $g: \Mbar \rightarrow (\Zplus)^2\sqcup\{\infty\}$ such that
\begin{align*}
g(\ybar_0) &= (1,0),  &g(\zbar_0) &= (0,1),  &g(\xbar_l) &= \infty
\end{align*}
for all $l$. Applying $g$ to \eqref{Mbareqn} yields $(i,j)= (i',j')$, so $i=i'$ and $j=j'$.

Now suppose that $k>0$, and apply the homomorphism $t$ from the proof of Lemma \ref{Mbarstabfin} to \eqref{Mbareqn}, to get
$$(i+j+k(1-n),\, k)= (i'+j'+k(1-n),\, k).$$
Therefore $i+j= i'+j'$ in this case.
\end{proof} 

\begin{theorem}  \label{Mbarwild}
The monoid $\Mbar$ is a wild refinement monoid.
\end{theorem}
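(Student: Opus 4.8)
The plan is to mirror the proof of Theorem \ref{Mwild}: I would exhibit $\Mbar$ as a stably finite refinement monoid that fails cancellation, and then invoke Theorem \ref{tame:stablyfinite}, which forces every stably finite \emph{tame} refinement monoid to be cancellative. First I would record that $\Mbar$ really is a refinement monoid. By Lemma \ref{Mbarisom}, $\Mbar \cong \calM/J_1$, where $\calM$ has refinement by Proposition \ref{Mref} and $J_1$ is an o-ideal of $\calM$ by Lemma \ref{Mirredelts}(b). Since the quotient of any refinement monoid by an o-ideal is again a (conical) refinement monoid, $\Mbar$ inherits refinement. I would then note that $\Mbar$ is stably finite by Lemma \ref{Mbarstabfin}.

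The crux is non-cancellativity. The defining relations \eqref{relnsMbar} give $\xbar_0 + \ybar_0 = \xbar_0 + \zbar_0$, so $\Mbar$ can be cancellative only if $\ybar_0 = \zbar_0$. To rule this out I would apply Lemma \ref{equalMbar}(b) with $n=0$ and coefficients $(i,j,k)=(1,0,0)$ and $(i',j',k')=(0,1,0)$: the equality $\ybar_0 = \zbar_0$ would then be an instance of \eqref{Mbareqn}, hence would force one of the alternatives of \eqref{Mbarconds}. Since $k=k'=0$, only the first alternative is available, and it requires $i=i'$, which fails ($1\ne 0$). Therefore $\ybar_0 \ne \zbar_0$, and $\Mbar$ is not cancellative.

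Finally, since $\Mbar$ is a stably finite refinement monoid that is not cancellative, Theorem \ref{tame:stablyfinite} shows that $\Mbar$ cannot be tame, i.e., $\Mbar$ is wild. I do not expect a genuine obstacle here: all the substantive content lives in the preceding lemmas, and in particular the coefficient-comparison Lemma \ref{equalMbar} is precisely what separates $\ybar_0$ from $\zbar_0$. The only points requiring minor care are verifying that the two hypotheses of Theorem \ref{tame:stablyfinite}—that $\Mbar$ is a refinement monoid and that it is stably finite—are both in place, which they are by the first paragraph.
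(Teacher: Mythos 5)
Your proposal is correct and follows exactly the paper's own argument: refinement passes from $\calM$ to $\Mbar \cong \calM/J_1$, stable finiteness comes from Lemma \ref{Mbarstabfin}, non-cancellativity comes from $\xbar_0+\ybar_0=\xbar_0+\zbar_0$ together with $\ybar_0\ne\zbar_0$ via Lemma \ref{equalMbar}, and Theorem \ref{tame:stablyfinite} then yields wildness. Your explicit instantiation of Lemma \ref{equalMbar}(b) with $(i,j,k)=(1,0,0)$ and $(i',j',k')=(0,1,0)$ is a correct spelling-out of the step the paper leaves implicit.
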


\begin{proof} Since $\Mbar \cong \calM/J_1$, refinement passes from $\calM$ to $\Mbar$. Now $\Mbar$ is stably finite by Lemma \ref{Mbarstabfin}, but it is not cancellative, because $\xbar_0+\ybar_0= \xbar_0+\zbar_0$ while $\ybar_0 \ne \zbar_0$ by Lemma \ref{equalMbar}. Therefore Theorem \ref{tame:stablyfinite} shows that $\Mbar$ is wild.
\end{proof}

\begin{lemma}  \label{Mbarirredelts}
{\rm(a)} The elements $\ybar_0$ and $\zbar_0$ are distinct irreducible elements in $\Mbar$.

{\rm(b)} The submonoid $\Jbar_2 := \Zplus\ybar_0+ \Zplus\zbar_0$ is an o-ideal of $\Mbar$, and $\Jbar_2 \cong (\Zplus)^2$.

{\rm(c)} $\Mbar/\Jbar_2 \cong \Zplus$.

{\rm(d)} Every nonzero element of $\Mbar$ dominates $\ybar_0$ or $\zbar_0$.

{\rm(e)} $\Jbar_2= \ped (\Mbar )$. 

{\rm(f)} $n\ybar_0+ n\zbar_0 \le \xbar_0$ for all $n\in\N$.
\end{lemma}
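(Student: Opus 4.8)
The plan is to mirror the treatment of $\calM$ in Lemma \ref{Mirredelts}, exploiting the explicit test homomorphisms already built for $\Mbar$ in Lemmas \ref{Mbarstabfin} and \ref{equalMbar} together with the general machinery of Proposition \ref{irredelementgenideal}. For (a), the distinctness of $\ybar_0$ and $\zbar_0$ is immediate from Lemma \ref{equalMbar}(b): an equality $\ybar_0=\zbar_0$ would be the case $k=k'=0$ with $(i,j)=(1,0)$ and $(i',j')=(0,1)$, forcing $i=i'$, a contradiction. For irreducibility I would use the homomorphism $g:\Mbar\to(\Zplus)^2\sqcup\{\infty\}$ from the proof of Lemma \ref{equalMbar}, under which $g(\ybar_0)=(1,0)$, $g(\zbar_0)=(0,1)$, and every $g(\xbar_l)=\infty$. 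Since $g$ sends every generator to a nonzero element of the conical monoid $(\Zplus)^2\sqcup\{\infty\}$, we get $g^{-1}(0)=\{0\}$; as $(1,0)$ is irreducible in $(\Zplus)^2\sqcup\{\infty\}$, any decomposition $\ybar_0=v+w$ forces $g(v)=0$ or $g(w)=0$, hence $v=0$ or $w=0$. Together with $\ybar_0\neq0$ (so $\ybar_0$ is noninvertible, $\Mbar$ being conical) this gives irreducibility of $\ybar_0$, and symmetrically of $\zbar_0$.

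Part (b) is then a direct application of Proposition \ref{irredelementgenideal} to the two-element family $\{\ybar_0,\zbar_0\}$ of distinct irreducibles, giving both that $\Jbar_2$ is an o-ideal and that $(\Zplus)^2\cong\Jbar_2$. For (c) I would observe that in $\Mbar/\Jbar_2$ the classes of $\ybar_0$ and $\zbar_0$ vanish, while the relations $\xbar_l=\xbar_{l+1}+\ybar_0$ collapse the classes of all the $\xbar_n$ to the single class $[\xbar_0]$; thus $\Mbar/\Jbar_2$ is cyclic, generated by $[\xbar_0]$. The homomorphism $f:\Mbar\to\Zplus$ of Lemma \ref{equalMbar} vanishes on $\Jbar_2$ and hence descends to $\bar f:\Mbar/\Jbar_2\to\Zplus$ with $\bar f([\xbar_0])=1$; composing the surjection $\Zplus\to\Mbar/\Jbar_2$, $1\mapsto[\xbar_0]$, with $\bar f$ yields the identity on $\Zplus$, so the surjection is injective and therefore an isomorphism.

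For (d) I would write an arbitrary nonzero element in the normal form $i\ybar_0+j\zbar_0+k\xbar_n$, which is legitimate by Lemma \ref{equalMbar}(a), and split into cases: if $i>0$ then the element dominates $\ybar_0$, if $j>0$ it dominates $\zbar_0$, and if $i=j=0<k$ then it dominates $\xbar_n=\xbar_{n+1}+\ybar_0\ge\ybar_0$. Granting (a) and (d), part (e) is the same argument as for $\calM$: the inclusion $\Jbar_2\subseteq\ped(\Mbar)$ is clear, and conversely any irreducible $a$ dominates $\ybar_0$ or $\zbar_0$ by (d), say $a=\ybar_0+w$, whereupon irreducibility and conicality force $w=0$, so $a=\ybar_0\in\Jbar_2$ (and symmetrically for $\zbar_0$).

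The only part requiring a genuine computation is (f), and it is where I would be most careful, although it reduces to a short telescoping argument. Iterating $\xbar_l=\xbar_{l+1}+\ybar_0$ gives $\xbar_0=\xbar_n+n\ybar_0$, while iterating the \emph{other} relation $\xbar_l=\xbar_{l+1}+\zbar_0$ gives $\xbar_n=\xbar_{2n}+n\zbar_0$; substituting the second into the first yields $\xbar_0=\xbar_{2n}+n\ybar_0+n\zbar_0$, whence $n\ybar_0+n\zbar_0\le\xbar_0$. I expect no real obstacle here; the one point to watch is that the two halves of the telescope must use the two different relations $\xbar_l=\xbar_{l+1}+\ybar_0$ and $\xbar_l=\xbar_{l+1}+\zbar_0$, which is precisely what lets both $\ybar_0$ and $\zbar_0$ appear simultaneously in the bound.
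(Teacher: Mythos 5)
Your proposal is correct and takes essentially the same route as the paper's proof in every part: the same test homomorphisms $f$ and $g$ from Lemma \ref{equalMbar}, the same appeal to Proposition \ref{irredelementgenideal} for (b), the identical normal-form case analysis for (d), the same pedestal argument for (e), and the same telescoped identity $\xbar_0=\xbar_{2n}+n\ybar_0+n\zbar_0$ for (f). The only difference is that you spell out steps the paper leaves implicit (the $g^{-1}(0)=\{0\}$ argument for irreducibility, the mutual-inverse check in (c), and the explicit two-stage telescoping in (f)), all of which are carried out correctly.
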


\begin{proof} (a) As already noted, Lemma \ref{equalMbar} implies that $\ybar_0 \ne \zbar_0$. Recall the homomorphism $g: \Mbar\rightarrow (\Zplus)^2\sqcup\{\infty\}$ in the proof of that lemma. Since $g(\ybar_0)= (1,0)$ and $g(\zbar_0)= (0,1)$ are irreducible elements of $(\Zplus)^2\sqcup\{\infty\}$, it follows that $\ybar_0$ and $\zbar_0$ are irreducible elements in $\Mbar$.

(b) This follows from Proposition \ref{irredelementgenideal}.

(c) Recall the homomorphism $f: \Mbar\rightarrow \Zplus$ from the proof of Lemma \ref{equalMbar}. Since $f$ sends all elements of $\Jbar_2$ to $0$, it induces a homomorphism $\fbar: \Mbar/\Jbar_2 \rightarrow \Zplus$. The homomorphism $\Zplus\rightarrow \Mbar/\Jbar_2$ that sends $1$ to $\xbar_0/{\equiv}_{\Jbar_2}$ is an inverse for $\fbar$.

(d) Any nonzero element $b\in \Mbar$ can be written as $i\ybar_0+ j\zbar_0+ k\xbar_n$ for some $n\in \N$ and $i,j,k\in \Zplus$, not all zero. If $i>0$ or $j>0$, then $b\ge \ybar_0$ or $b\ge \zbar_0$. Otherwise, $b\ge \xbar_n = \xbar_{n+1}+ \ybar_0 \ge \ybar_0$.

(e) Same as in Lemma \ref{Mirredelts}(d). 

(f) It follows from \eqref{relnsMbar} that $\xbar_0= \xbar_{2n}+ n\ybar_0+ n\zbar_0$ for all $n$.
\end{proof}

\subsection{Summary}  \label{summary}
We summarize the main properties of $\calM$ and $\Mbar$.

\begin{theorem}  \label{MMbarsummary}
The monoids $\calM$ and $\Mbar$ are wild refinement monoids. They are conical, stably finite, antisymmetric, separative, and unperforated. The monoid $\calM$ is archimedean, but $\Mbar$ is not. There are o-ideals $J_1\subset J_2$ in $\calM$ such that
\begin{align*}
J_1 &\cong (\Zplus)^{(\N)},  &J_2/J_1 &\cong (\Zplus)^2,  &\calM/J_2 &\cong \Zplus.
\end{align*}
Moreover, $\Mbar\cong \calM/J_1$, and $\Mbar$ has an o-ideal $\Jbar_2$ such that
$\Jbar_2 \cong (\Zplus)^2$ and $\Mbar/\Jbar_2 \cong \Zplus$.
\end{theorem}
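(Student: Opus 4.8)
The plan is to assemble this statement almost entirely by citing the results already proved for $\calM$ and $\Mbar$, and then to supply the only two pieces not yet recorded: the intermediate o-ideal $J_2$ of $\calM$, and the failure of the archimedean property for $\Mbar$.

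First I would collect the cited facts. For $\calM$, refinement is Proposition~\ref{Mref}, wildness is Theorem~\ref{Mwild}, conicality, stable finiteness, the archimedean property, and antisymmetry are Lemma~\ref{stateM}, while separativity and unperforation are Lemma~\ref{Mseparative}. For $\Mbar$, refinement and wildness are Theorem~\ref{Mbarwild}, conicality, stable finiteness, and antisymmetry are Lemma~\ref{Mbarstabfin}, and separativity and unperforation are Corollary~\ref{Mbarunperf}. The isomorphism $\Mbar\cong \calM/J_1$ is Lemma~\ref{Mbarisom}, the identification $J_1\cong (\Zplus)^{(\N)}$ is Lemma~\ref{Mirredelts}(b), and on the $\Mbar$ side $\Jbar_2\cong (\Zplus)^2$ and $\Mbar/\Jbar_2\cong \Zplus$ are Lemma~\ref{Mbarirredelts}(b),(c). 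This leaves only the construction of $J_2$ with its two subquotients and the non-archimedean claim.

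To produce $J_2$, I would pull $\Jbar_2$ back along the surjection $q\colon \calM\to \Mbar$ of Lemma~\ref{Mbarisom}, setting $J_2:=q^{-1}(\Jbar_2)$. Because $q$ is a surjective homomorphism with $q^{-1}(0)=J_1$ (this is what it means for $q$ to induce an isomorphism $\calM/J_1\cong \Mbar$) and $\Jbar_2$ is an o-ideal of $\Mbar$, the preimage $J_2$ is an o-ideal of $\calM$ containing $J_1$; the inclusion is strict since $y_0\in J_2\setminus J_1$ (its image $\ybar_0$ lies in $\Jbar_2$, whereas $y_0\notin \sum\Zplus a_n$ by Lemma~\ref{equalMn}). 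Writing $\qbar\colon \calM/J_1\xrightarrow{\sim}\Mbar$ for the induced isomorphism and $\pi\colon \calM\to \calM/J_1$ for the quotient map, one has $J_2/J_1=\qbar^{-1}(\Jbar_2)$, so $J_2/J_1\cong \Jbar_2\cong (\Zplus)^2$, and the third isomorphism theorem gives $\calM/J_2\cong (\calM/J_1)/(J_2/J_1)\cong \Mbar/\Jbar_2\cong \Zplus$. Should one prefer to avoid the correspondence, the homomorphism $f\colon \calM\to \Zplus$ with $f(x_l)=1$ and $f(y_l)=f(z_l)=f(a_l)=0$ from the proof of Lemma~\ref{equalMn} satisfies $f^{-1}(0)=J_2$, and a short check (using that the $x_n$-coefficient of an element of $\calM_n$ is well defined by Lemma~\ref{equalMn}) shows that $f$ induces an isomorphism $\calM/J_2\cong \Zplus$ directly.

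For the archimedean assertions, $\calM$ is archimedean by Lemma~\ref{stateM}, while $\Mbar$ fails to be: Lemma~\ref{Mbarirredelts}(f) gives $n\ybar_0+n\zbar_0\le \xbar_0$, hence $n\ybar_0\le \xbar_0$ for every $n\in \N$, yet $\ybar_0$ is a nonzero element of the conical monoid $\Mbar$ and so is non-invertible, contradicting the archimedean condition. The only genuinely new step is the identification of $J_2$ together with its subquotients, and since that reduces to the o-ideal correspondence applied to $\Jbar_2$ under $\Mbar\cong \calM/J_1$, I do not expect any serious obstacle.
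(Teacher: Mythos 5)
Your proposal is correct and follows essentially the same route as the paper: the listed properties are cited from the same results (Theorems \ref{Mwild}, \ref{Mbarwild}, Lemmas \ref{stateM}, \ref{Mseparative}, \ref{Mbarstabfin}, Corollary \ref{Mbarunperf}), the failure of the archimedean property for $\Mbar$ is deduced from Lemma \ref{Mbarirredelts} exactly as in the paper, and the o-ideal statements come from Lemmas \ref{Mirredelts}, \ref{Mbarisom}, \ref{Mbarirredelts}, with $J_2$ obtained as the pullback of $\Jbar_2$ under $\calM \to \calM/J_1 \cong \Mbar$. Your write-up merely makes explicit (via the o-ideal correspondence and the map $f$) the details the paper leaves to the reader.
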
 

\begin{proof} The properties stated in the first two sentences are proved in Theorems \ref{Mwild}, \ref{Mbarwild}, Lemmas \ref{stateM}, \ref{Mseparative}, \ref{Mbarstabfin}, and Corollary \ref{Mbarunperf}. Further, Lemma \ref{stateM} shows that $\calM$ is archimedean. By Lemma \ref{Mbarirredelts}, $\ybar_0\ne 0$ and $n\ybar_0 \le \xbar_0$ for all $n\in\N$, so $\Mbar$ is not archimedean. The statements about o-ideals and quotients follow from Lemmas \ref{Mirredelts}, \ref{Mbarisom}, \ref{Mbarirredelts}.
\end{proof}

\begin{remark}  \label{MMbarexamples}
The monoids $\calM$ and $\Mbar$ illustrate other aspects of refinement monoid behavior as well. For instance, the archimedean refinement monoid $\calM$ has a non-archimedean quotient $\calM/J_1$. We also note that $\Zplus\, \zbar_0$ 
is an o-ideal of $\Mbar$ and that $\Mbar/\Zplus\, \zbar_0$ has a presentation $\langle x,y \mid x+y=x\rangle$. This gives an example of a non-stably-finite quotient of a stably finite refinement monoid. 
\end{remark}

\section{Open Problems}

\subsection{} Find axioms that characterize tameness for refinement monoids. A theorem on the model of Theorem \ref{tamecriterion:stablyfinite} would be the ideal result, where the characterizing axioms might include 
unperforation and separativity (recall Theorem \ref{tame:sepunperf}). 

\subsection{} Find conditions (C), applying to refinement monoids $M$ with o-ideals $J$, such that tameness of $J$ and $M/J$, together with (C), implies that $M$ is tame. I.e., find conditions under which the converse of  Proposition \ref{prop:tame-closed-under-quotients} holds. This converse fails in general, as Theorem \ref{MMbarsummary} shows.

\section*{Acknowledgments} We thank the referee for his/her thorough reading of the manuscript and for useful suggestions, particularly Proposition \ref{prop:Riesz-prop}, and we thank E. Pardo and F. Wehrung for helpful correspondence and references.



\begin{thebibliography}{10}


\bibitem{Areal} P.  Ara, \emph{The realization problem for von Neumann regular rings},
in \emph{Ring Theory 2007. Proceedings of the Fifth China-Japan-Korea
Conference}, (H. Marubayashi, K. Masaike, K. Oshiro, M. Sato, Eds.), Hackensack, NJ (2009)
World Scientific, pp.~21--37.

\bibitem{AE} P. Ara and R. Exel, \emph{Dynamical systems associated to separated graphs, graph algebras, and paradoxical decompositions},
Adv. Math. {\bf 252} (2014), 748--804. 

\bibitem{AG} P. Ara and K. R. Goodearl, \emph{Leavitt path algebras of
separated graphs}, J. reine angew. Math. {\bf 669} (2012), 165--224.

\bibitem{AGreal} P. Ara and K. R. Goodearl, \emph{The realization problem for some wild monoids and the Atiyah problem},  in preparation.

 \bibitem{AGOP} P. Ara, K.R. Goodearl, K.C. O'Meara, and E. Pardo, 
 \emph{Separative cancellation for projective modules over exchange rings}, Israel J. Math. \textbf{105} (1998), 105--137.

\bibitem{AMP} P. Ara, M. A. Moreno, and E. Pardo, \emph{Nonstable $K$-theory for graph
algebras}, Algebr. Represent. Theory \textbf{10} (2007), 157--178.

\bibitem{AP} P. Ara and E. Pardo, \emph{Primely generated refinement monoids}, http://arxiv.org/abs/1407.5406.

\bibitem{APW} P. Ara, F. Perera, and F. Wehrung, \emph{Finitely generated antisymmetric graph monoids}, J. Algebra \textbf{320} (2008), 1963--1982.

\bibitem{Brook97}
G. Brookfield, \emph{Monoids and Categories of Noetherian Modules}, Ph.D. Dissertation, University of California at Santa Barbara, 1997.

\bibitem{Brook01}
G.  Brookfield, \emph{Cancellation in primely generated refinement
monoids},  Algebra Universalis \textbf{46} (2001), 343--371.

\bibitem{Chen09} 
H. Chen, \emph{On separative refinement monoids}, Bull. Korean Math. Soc. \textbf{46} (2009), 489--498.

\bibitem{CL} C. L. Chuang, P. H. Lee, \emph{On regular subdirect products of simple Artinian rings}, Pacific J. Math. \textbf{142} (1990), 17--21.

\bibitem{CP}
A. H. Clifford and G. B. Preston, \emph{The Algebraic Theory of Semigroups}, Vol. 2, Providence (1967) Amer. Math. Soc.

\bibitem{Dob82}
H. Dobbertin, \emph{On Vaught's criterion for isomorphisms of countable 
Boolean algebras}, Algebra Universalis \textbf{15} (1982), 95--114.

\bibitem{Dob}
H. Dobbertin, \emph{Refinement monoids, Vaught monoids, and Boolean algebras}, Math. Annalen \textbf{265} (1983), 473--487.

\bibitem{Dobb84}
H. Dobbertin, \emph{Primely generated regular refinement monoids}, J. Algebra \textbf{91} (1984), 166--175.

\bibitem{EHS}
E. G. Effros, D. E. Handelman, and C.-L. Shen, \emph{Dimension groups and their affine representations}, Amer. J. Math. \textbf{102} (1980), 385--407.

\bibitem{poagi}
K. R. Goodearl, \emph{Partially Ordered Abelian Groups with Interpolation}, Monographs and Surveys in Math. 20, Providence (1986) Amer. Math. Soc.

\bibitem{vnrr} K. R. Goodearl, ``Von Neumann Regular Rings'',
Pitman, London 1979; Second Ed., Krieger, Malabar, Fl., 1991.

\bibitem{directsum}
K. R.  Goodearl, \emph{Von Neumann regular rings and direct sum
decomposition problems}, in \emph{Abelian Groups and Modules} (Padova,
1994), Dordrecht (1995) Kluwer, pp.~249--255.

\bibitem{GPW}
K. R. Goodearl, E. Pardo, and F. Wehrung, \emph{Semilattices of groups and inductive limits of Cuntz algebras}, J. reine angew. Math. \textbf{588} (2005), 1--25.

\bibitem{GW01}
K. R. Goodearl and F. Wehrung, \emph{Representations of distributive semilattices 
in ideal lattices of various algebraic structures}, Algebra Universalis \textbf{45} (2001), 71--102.

\bibitem{Gr71}
G. Gr\"atzer, \emph{Lattice Theory. First Concepts and Distributive Lattices}, San Francisco (1971) Freeman.

\bibitem{Gril}
P. A. Grillet, \emph{Interpolation properties and tensor products of semigroups}, Semigroup Forum 1 (1970), 162--168.

\bibitem{Gril76}
P. A. Grillet, \emph{Directed colimits of free commutative semigroups}, J. Pure Appl. Algebra \textbf{9} (1976), 73--87.

\bibitem{Ket}
J. Ketonen, \emph{The structure of countable Boolean algebras}, Annals of Math. \textbf{108} (1978), 41--89.

\bibitem{MM} P. Menal and J. Moncasi, \emph{On regular rings with stable range 2}, J. Pure Appl. Algebra \textbf{24} (1982), 25--40.

\bibitem{moncasi} J. Moncasi,  \emph{A regular ring whose $K_0$ is not a Riesz group}, Comm. Algebra \textbf{13} (1985), 125--131.

\bibitem{MDS}
C. Moreira Dos Santos, \emph{A refinement monoid whose maximal antisymmetric quotient is not a refinement monoid}, Semigroup Forum \textbf{65} (2002), 249--263.

\bibitem{PW06}
E. Pardo and F. Wehrung, \emph{Semilattices of groups and nonstable K-theory of extended Cuntz limits}, K- Theory \textbf{37} (2006), 1--23.

\bibitem{PWunp} E. Pardo and F. Wehrung, \emph{Generating classes of regular refinement monoids}, http://hdl.handle.net/10498/16139. 

\bibitem{Pierce}
R. S. Pierce, \emph{Countable Boolean algebras}, in \emph{Handbook of Boolean Algebras}, Vol.~3 (J. D. Monk and R. Bonnet, Eds.), Amsterdam (1989) North-Holland, pp.~775--876.

\bibitem{Pud}
P. Pudl\'ak, \emph{On congruence lattices of lattices}, Algebra Universalis \textbf{20} (1985), 96--114.

\bibitem{W98}
F. Wehrung, \emph{Embedding simple commutative monoids into simple refinement monoids}, Semigroup Forum \textbf{56} (1998), 104--129.

\bibitem{W98IsraelJ}
F. Wehrung, \emph{Non-measurability properties of interpolation vector spaces}, Israel J. Math. \textbf{103} (1998), 177--206.

\bibitem{Wunpub}
F. Wehrung, \emph{Various remarks on separativity and stable rank in refinement monoids}, unpublished manuscript.

\end{thebibliography}
\end{document}